\theoremstyle{plain}
\newtheorem{theorem}{Theorem}[section]
\newtheorem{proposition}[theorem]{Proposition}
\newtheorem{lemma}[theorem]{Lemma}
\newtheorem{corollary}[theorem]{Corollary}
\theoremstyle{definition}
\newtheorem{definition}[theorem]{Definition}
\newtheorem{assumption}[theorem]{Assumption}
\theoremstyle{remark}
\newtheorem{remark}[theorem]{Remark}
\icmltitlerunning{Distributed Retraction-Free and Communication-Efficient Optimization on the Stiefel Manifold}
\newcommand{\contentitem}[2]{ \item[ #1 ] \dotfill #2}
\newcommand{\RR}[1]{\mathbb{R}^{#1}}
\newcommand{\Ebb}{\mathbb{E}}
\newcommand{\tranT}{^{\top}}
\newcommand{\vbo}{\boldsymbol{v}}
\newcommand{\gbo}{\boldsymbol{g}}
\newcommand{\cbo}{\boldsymbol{c}}
\begin{document}

\twocolumn[
\icmltitle{Distributed Retraction-Free and Communication-Efficient Optimization\\ on the Stiefel Manifold}



\icmlsetsymbol{equal}{*}

\begin{icmlauthorlist}
\icmlauthor{Yilong Song}{pku}
\icmlauthor{Peijin Li}{pku}
\icmlauthor{Bin Gao}{cas}
\icmlauthor{Kun Yuan}{pku}

\end{icmlauthorlist}

\icmlaffiliation{pku}{Peking University}
\icmlaffiliation{cas}{Academy of Mathematics and Systems Science, Chinese Academy of Sciences}

\icmlcorrespondingauthor{Kun Yuan}{kunyuan@pku.edu.cn}

\icmlkeywords{Stiefel manifold, distributed optimization, error feedback, retraction-free}

\vskip 0.3in
]



\printAffiliationsAndNotice{}  


\begin{abstract}
Optimization problems on the Stiefel manifold, ranging from principal component analysis to enhancing neural network robustness, are ubiquitous in machine learning. The Landing algorithm avoids computationally expensive retraction operations on manifolds, making it highly competitive for large-scale problems. This paper extends this method to distributed settings, introducing \textbf{\texttt{EF-Landing}}, the first retraction-free and communication-efficient algorithm for distributed stochastic optimization on the Stiefel manifold. By incorporating communication compression and error feedback, {EF-Landing} ensures convergence and constraint feasibility while significantly reducing communication overhead. We provide sharp convergence guarantees, demonstrating that EF-Landing achieves the same asymptotic linear speedup convergence rate as existing methods without communication compression. Furthermore, our analysis is highly versatile, applying to both deterministic and stochastic settings and encompassing algorithms based on gradient descent or momentum-based gradient descent. We also generalize EF-Landing to operate on block-wise Stiefel manifolds, enabling greater flexibility for structured constraints. Extensive numerical experiments validate our theoretical results.
\end{abstract}

\section{Introduction}
The Stiefel manifold, defined as the set of matrices with orthonormal columns, plays a crucial role in enhancing the dissimilarity between learned features in machine learning. Many classical problems inherently require orthogonal constraints, such as Principal Component Analysis (PCA) \cite{hotelling_analysis_1933} and Canonical Correlation Analysis (CCA) \cite{hotelling_relations_1936}. Furthermore, recent research has demonstrated that incorporating additional orthogonal constraints in deep learning problems often improves the robustness of neural networks \cite{arjovsky_unitary_2016, wang_orthogonal_2020, bansal_can_2018}. These insights underscore the importance of optimization on the Stiefel manifold in machine learning applications. Traditional methods typically involve a single computing node. However, state-of-the-art performance in modern tasks is often achieved using extremely large training datasets, necessitating efficient distributed algorithms for stochastic optimization on the Stiefel manifold across multiple computing nodes.

This paper considers the following optimization problem across \( N \) collaborative nodes:
\begin{subequations}
\label{eq-prob}
\begin{align}
    \min_{X \in \mathbb{R}^{n \times p}}\ & f(X) \hspace{-0.5mm}=\hspace{-0.5mm} \frac{1}{N} \hspace{-0.5mm}\sum_{i=1}^N \hspace{-0.5mm} \big[ f_i(X) \hspace{-0.5mm}:=\hspace{-0.5mm} \mathbb{E}_{\xi_i \sim \mathcal{D}_i} F(X; \xi_i) \big], \label{problem-setting-1} \\
    \mathrm{s.\,t.}\quad\ & X^\top X \hspace{-0.5mm}=\hspace{-0.5mm} I_p, 
\label{problem-setting-2}
\end{align}
\end{subequations}
where \( n \geq p \), \( f_i(X) \) represents the objective function for each node \( i \), and the random variable \( \xi_i \) corresponds to the local data maintained by node \( i \), following a local distribution \( \mathcal{D}_i \). The constraint \eqref{problem-setting-2} can also be expressed as $X \in \mathrm{St}(p,n)$ in which $\mathrm{St}(p,n):=\{X\in\RR{n\times p} \mid X\tranT X=I_p\}$ is referred to as the Stiefel manifold. It is straightforward to verify that the Stiefel manifold constraint \eqref{problem-setting-2} is non-convex.

Numerous approaches can be directly extended to solve the distributed problem \eqref{eq-prob}. One line of research employs Riemannian methods to iteratively move towards the desired solution, incorporating retraction operations to ensure feasibility on the Stiefel manifold \cite{edelman_geometry_1998, absil_optimization_2009, absil_projection-like_2012}. However, retraction operations are computationally expensive, often requiring matrix inversion, matrix exponential calculations, or QR factorization. To address this bottleneck, another line of research introduced the retraction-free Landing method \cite{ablin_fast_2022}, which relies solely on matrix multiplication and is particularly efficient on GPUs. In the Landing algorithm, iterates do not remain on the Stiefel manifold but gradually ``land'' (i.e., converge) onto it. This retraction-free property makes the method highly practical for large-scale optimization problems. For these reasons, this paper focuses on developing and analyzing distributed Landing algorithms to solve problem \eqref{eq-prob}.

In distributed optimization on the Stiefel manifold, each worker transmits gradient matrices to a central server to update the model parameters. Given the substantial size of these gradient matrices, communicating them at every iteration incurs significant overhead, which hinders algorithmic efficiency and scalability. To address this issue, this paper explores communication compression techniques \citep{alistarh_qsgd_2017,richtarik_ef21_2021,stich_sparsified_2018,huang_lower_2022} to reduce overhead. Instead of transmitting full gradient or model matrices, these strategies communicate compressed matrices with significantly smaller sizes at each iteration. While communication compression has demonstrated both theoretical guarantees and empirical successes in unconstrained distributed optimization, \textit{no existing algorithms}, to the best of our knowledge, have been developed for distributed stochastic optimization on Stiefel manifolds. Several key questions  arise when developing algorithms:
\vspace{-0.3cm}
\begin{itemize}
    \item[Q1.] Which components of the algorithm can be compressed to ensure convergence? Specifically, should we compress the Euclidean gradient, Riemannian gradient, or gradient coupled with constraint penalty? 
\vspace{-0.2cm}
    \item[Q2.] Is the error compensation strategy necessary for optimization on the Stiefel manifold to maintain both optimality and feasibility?
\end{itemize}
\vspace{-0.3cm}
This paper addresses these questions and introduces the first retraction-free and communication-efficient algorithm for distributed stochastic optimization on the Stiefel manifold. Specifically, we make the following contributions:
\begin{itemize}[leftmargin=5mm]
\item \textbf{EF-Landing algorithm.} We identify that compressing the Euclidean gradient of each \( f_i(x) \) ensures both optimality and feasibility. Furthermore, we demonstrate that error feedback mechanisms are essential for convergence, even in single-node settings. Building on these insights, we propose the EF-Landing algorithm for distributed stochastic optimization problem \eqref{eq-prob} on the Stiefel manifold with communication compression.

\item \textbf{Sharp convergence guarantees.} We provide convergence guarantees and establish convergence rates for EF-Landing. Our algorithm achieves the same asymptotic linear speedup convergence rate as the vanilla Landing algorithm without any communication compression. Furthermore, our results are highly versatile. By selecting appropriate hyper-parameters, our analysis applies to both deterministic and stochastic settings, encompassing a wide range of algorithms based on gradient descent or momentum-based gradient descent. Notably, our established convergence rates also recover results for communication compression in unconstrained distributed optimization. Our analysis is built upon a novel merit function bound, which serves as a foundational tool for algorithms utilizing lossy gradient estimates and may be of independent interest.

\item \textbf{Generalization to block-wise Stiefel manifolds.} We extend our framework to a generalized setting where matrix variables are partitioned into blocks with block-wise orthogonal constraints. This enables greater flexibility for large-scale optimization with structured constraints. We develop communication-efficient algorithms for this setting and provide convergence analysis, showing that the proposed methods retain the same theoretical guarantees as their non-block counterparts.
\end{itemize}

\section{Related Work}
\textbf{Optimization on manifolds.} 
Riemannian methods are classical approaches for solving optimization problems on manifolds. The foundational principles of Riemannian gradient descent with retraction were established in \cite{edelman_geometry_1998, absil_optimization_2009, absil_projection-like_2012}. Building on gradient-based techniques, various algorithms and convergence results have been developed, including first-order methods \cite{zhang_first-order_2016, boumal_global_2019}, second-order methods \cite{absil_trust-region_2007, qi_riemannian_2010}, and accelerated methods \cite{liu_accelerated_2017, ahn_nesterovs_2020, alimisis_momentum_2021}. Stochastic optimization on manifolds has also been explored \cite{bonnabel_stochastic_2013, tripuraneni_averaging_2018}, with further refinements presented in \cite{zhang_riemannian_2016, zhou_faster_2019}. On the other hand, retraction-free approaches primarily include methods based on penalty functions and augmented Lagrangian formulations \cite{xiao_exact_2021, xiao_class_2022, gao_parallelizable_2019}, as well as techniques that reformulate manifold constraints \cite{lezcano-casado_cheap_2019, liu_penalty-free_2024}. Additionally, the Landing method, designed specifically for the Stiefel manifold, has been proposed and analyzed in \cite{ablin_fast_2022, ablin_infeasible_2024, vary_optimization_2024}. 

\textbf{Distributed optimization.} 
Distributed optimization has been extensively studied, with Distributed Gradient Descent \cite{tsitsiklis_distributed_1986} serving as the representative method. To address the communication bottleneck of central parameter servers, three main improvement strategies have been explored: decentralized communication \cite{lopes_diffusion_2008, shi_extra_2015, nedic_achieving_2017}, lazy communication \cite{mcmahan_communication-efficient_2017,stich2019local}, and communication compression \cite{alistarh_qsgd_2017,richtarik_ef21_2021}. Research on distributed optimization on manifolds remains relatively limited \cite{chen_decentralized_2021, wang_decentralized_2022, sun_retraction-free_2024, hu_improving_2024, qu_distributed_2024, zhao_distributed_2025, zhang_nonconvex_2024}, including work such as \cite{sun_retraction-free_2024} which incorporated the Landing algorithm with gradient tracking techniques, \cite{hu_improving_2024}, which applied communication compression, and \cite{zhang_nonconvex_2024} which considered federated learning with local updates.

\textbf{Communication compression.}
Communication compression  significantly reduces the amount of information exchanged during distributed optimization. These techniques mainly fall into two categories: sparsification methods, such as Rand-$K$ and Top-$K$ \cite{stich_local_2019, wangni_gradient_2018}, and quantization methods, like QSGD and TurnGrad \cite{alistarh_qsgd_2017, wen_terngrad_2017}. To further mitigate information distortion, error feedback techniques have been widely adopted \cite{richtarik_ef21_2021, seide_1-bit_2014, karimireddy_error_2019, stich_error-feedback_2021}, with \cite{fatkhullin_momentum_2023} showing that incorporating momentum enhances the effectiveness of error feedback. Optimal compelxity with communicaiton compression has been examined in \cite{huang_lower_2022,he2024unbiased}. 

\section{Preliminary}

\textbf{Notations.} Given a matrix \( X \in \mathbb{R}^{m \times n} \), let \( \|X\|_F \) denote the Frobenius norm, with the corresponding inner product defined as \( \langle X, Y \rangle := \mathrm{Tr}(X^\top Y) \). For a square matrix \( M \in \mathbb{R}^{n \times n} \), the symmetric and skew-symmetric components are given by \( \mathrm{sym}(M) := \frac{1}{2}(M + M^\top) \) and \( \mathrm{skew}(M) := \frac{1}{2}(M - M^\top) \), respectively.

\subsection{Landing Method}\label{Landing-preliminary-section}
 
\textbf{Descent direction.} The Landing method \cite{ablin_fast_2022, ablin_infeasible_2024} utilizes both the Riemannian gradient and a penalty term to enforce constraints while eliminating the need for retraction operation. The descent direction of Landing method is 
\begin{align}
\Lambda(X) = & \, \mathrm{grad}f(X) + \lambda \nabla \mathcal{N}(X), \label{landing-descent-direction}\\
\mbox{where}\quad  & \mathrm{grad}f(X) := \mathrm{skew}\Big(\nabla f(X)X\tranT\Big)X,  \nonumber \\
& \ \nabla \mathcal{N}(X) = X\Big(X\tranT X - I_p\Big). \nonumber 
\end{align} 
The first term, \(\mathrm{grad}f(X)\), represents the Riemannian gradient on the Stiefel manifold \(\mathrm{St}(p,n)\) with respect to the \textit{canonical metric} \cite{gao_optimization_2022}, when \(X\) satisfies the manifold constraints. The second term, \(\nabla \mathcal{N}(X)\), denotes the gradient of the penalty term \(\mathcal{N}(X) := \frac{1}{4}\|X^\top X - I_p\|_F^2\), where the penalty parameter \(\lambda > 0\) controls its weight. 

\textbf{Landing.} With descent direction \eqref{landing-descent-direction}, Landing method is: 
\begin{equation}
    X^{k+1}=X^k-\gamma \Lambda (X^k)\label{landing-iter},
\end{equation}
where $\gamma$ is the step size. Rather than enforcing an exact constraint at each step, the Landing method only requires the iterations to remain within the neighborhood of \(\mathrm{St}(p,n)\), i.e., the safe region defined as follows \cite{ablin_infeasible_2024}:
\begin{definition}[Safe Region]
    \label{safe-region-defi}
    Given some $\epsilon \in (0,3/4)$, we define the safe region of a Stiefel manifold as 
    \begin{equation*}
        \mathrm{St}(p,n)^\epsilon :=\Big\{ X\in\RR{n\times p} \mid \|X\tranT X-I_p\|\leq \epsilon\Big\}. 
    \end{equation*}
\end{definition}

The intuition behind the Landing algorithm is that \(\mathrm{grad}f(X)\) drives the iteration towards the optimal solution on the Stiefel manifold, while \(\nabla \mathcal{N}(X)\) steers the iteration towards the Stiefel manifold constraint. The orthogonality between these two terms ensures both optimality and feasibility as long as the algorithm converges.

\subsection{Compressor and Error Feedback}\label{Compression-and-EF-section}
\textbf{Contractive compressor.} This paper examines communication compression using contractive compressors, with Top-\(K\) and Random-\(K\) being commonly used examples \cite{wangni_gradient_2018,stich_local_2019}.

\begin{definition}[Contractive Compressor]
\label{compressor-definition}
A compressor $\mathcal{C}$ is defined as a contractive compressor if it satisfies
\begin{equation*}
    \mathbb{E}_{\mathcal{C}}\Big[ \|\mathcal{C}(X)-X\|_F^2 \Big]\leq (1-\alpha) \|X\|_F^2,\quad \forall X\in\RR{n\times p},
\end{equation*}
where $\alpha\in(0,1]$ is the contractive factor. The expectation is taken over the randomness of the compression operator $\mathcal{C}$.
\end{definition}

\textbf{Error feedback.} Consider the unconstrained problem: 
\begin{align}
\label{eq-unconstrained-prob}
\min_{X \in \mathbb{R}^{n\times p}}\quad f(X).
\end{align}
The error feedback method \cite{richtarik_ef21_2021} to solve the above unconstrained optimization problem is
\begin{subequations}
\label{eq-ef}
\begin{align}
    X^{k+1} & = X^k - \gamma Y^k, \label{eq-ef-1}  \\
    Y^{k+1} &= Y^k + \mathcal{C}(\nabla f(X^{k+1}) - Y^k), \label{eq-ef-2}
\end{align}
\end{subequations}
where \( Y^k \) is the compressed approximation of the gradient \( \nabla f(X^k) \), and \( \mathcal{C}(\cdot) \) satisfies Definition~\ref{compressor-definition}. The intuition behind error feedback is straightforward: 
\begin{align*}
&\ \mathbb{E}_\mathcal{C}\|Y^{k+1} - \nabla f(X^{k+1})\|^2 \\
\overset{\eqref{eq-ef-2}}{=}&\ \mathbb{E}_\mathcal{C}\|Y^{k} - \nabla f(X^{k+1}) - \mathcal{C}(Y^k - \nabla f(X^{k+1}))\|^2 \\
\le&\ (1-\alpha) \|Y^k - \nabla f(X^{k+1})\|^2.
\end{align*}
Suppose \( \nabla f(X^k) \to \nabla f(X^*) \) as \( k \to \infty \), where \( X^* \) is a stationary solution to problem \eqref{eq-unconstrained-prob}. The above inequality implies that \( Y^k \) converges to \( \nabla f(X^*) \). Combined with \eqref{eq-ef-1}, this guarantees that \( X^k \to X^* \)  under compression, thereby validating the effectiveness of the error feedback method.

\section{EF-Landing Algorithm}\label{reasonableness}
In this section, we introduce \textbf{EF-Landing}, the first distributed \underline{\textbf{Landing}} method incorporating \underline{\textbf{E}}rror \underline{\textbf{F}}eedback.

\begin{figure}
    \centering
    \includegraphics[width=0.75\linewidth]{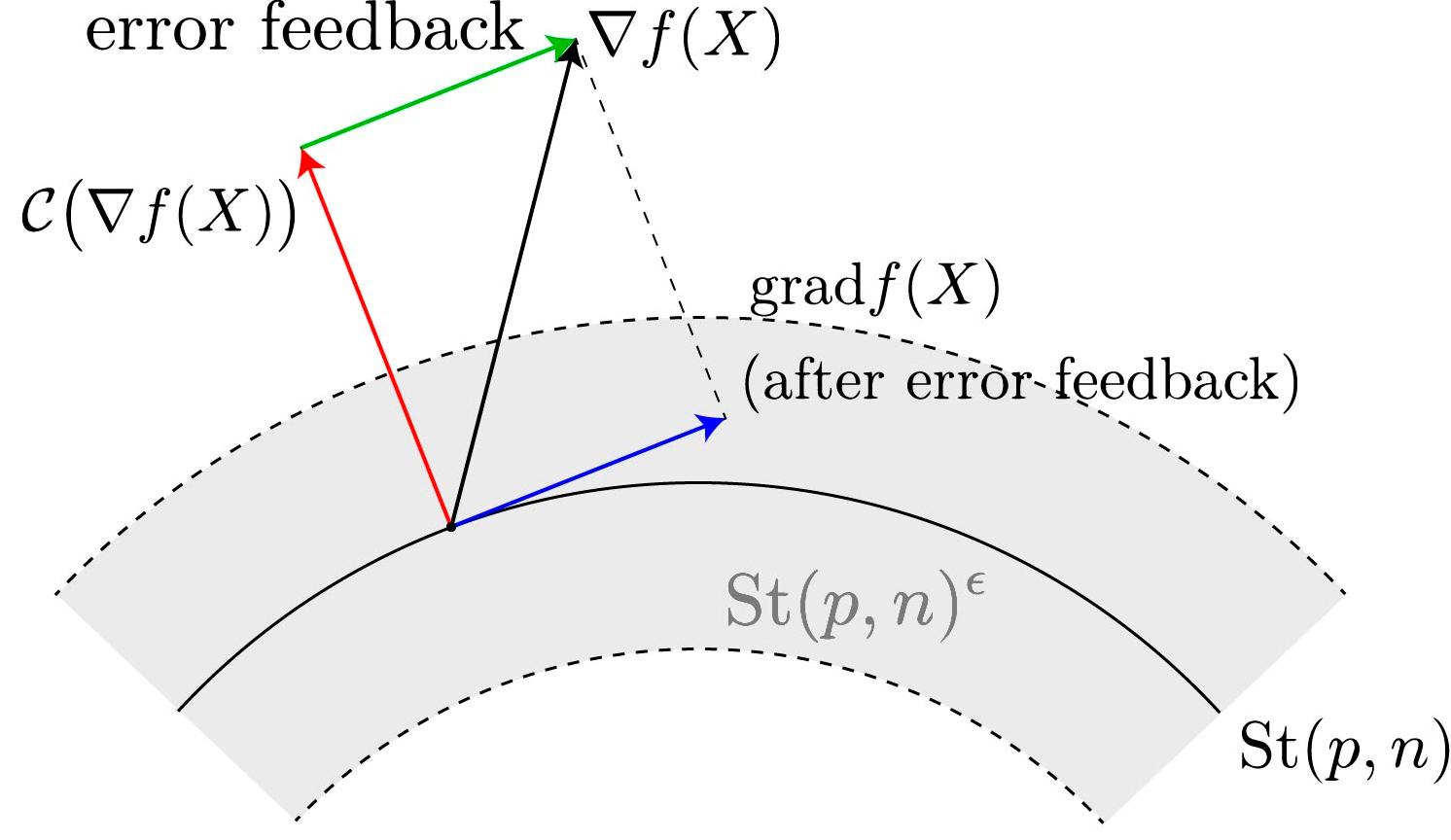}
    \caption{The necessity of error feedback}
    \label{fig:ef-necessity}
\end{figure}

\subsection{Compression on Euclidean Gradient}
In the error feedback method for solving the unconstrained optimization problem \eqref{eq-unconstrained-prob}, compressing the gradient \( \nabla f(X) \) is sufficient to ensure convergence. Given that the Landing method can be expressed in a descent form \eqref{landing-iter}, a natural question arises: should we mimic recursion \eqref{eq-ef} and compress \( \Lambda(X) \) for EF-Landing?

We find that compressing \( \Lambda(X) \) does not necessarily guarantee convergence.  Note that the key factor ensuring the effectiveness of the Landing method \eqref{landing-descent-direction}--\eqref{landing-iter} is the \textbf{orthogonality} property between the Riemannian gradient \( \mathrm{grad}f(X) \) and the penalty gradient \( \nabla \mathcal{N}(X) \), given by  
\begin{align}
\label{eq-orthogonality}
\langle \mathrm{grad}f(X), \lambda \nabla \mathcal{N}(X) \rangle = 0, \quad \forall X \in \mathbb{R}^{n \times p}.
\end{align}
This orthogonality ensures that if \( \|\Lambda(X)\|_F^2 \to 0 \), then both \( \|\mathrm{grad}f(X)\|_F^2 \to 0 \) and \( \|\lambda\nabla \mathcal{N}(X)\|_F^2 \to 0 \) hold, guaranteeing optimality while preserving feasibility on the Stiefel manifold. However, it remains unclear whether orthogonality \eqref{eq-orthogonality} is preserved after compressing \( \Lambda(X) \). 

To address this issue, we propose to compress the Euclidean gradient $\nabla f(X)$ in EF-Landing. Let $\gbo$ be the compressed approximation of the Euclidean gradient $\nabla f(X)$, we let 
\begin{align}
\tilde{\Lambda}(X;\gbo) = & \, \mathrm{grad}(\gbo) + \lambda \nabla \mathcal{N}(X), \label{eq-compressed-landing-descent-direction}\\
\mbox{where}\quad  & \mathrm{grad}(\gbo) := \mathrm{skew}\Big(\gbo X\tranT\Big)X,  \nonumber \\
& \ \nabla \mathcal{N}(X) = X\Big(X\tranT X - I_p\Big). \nonumber 
\end{align}
The proposition below ensures the orthogonality between $\mathrm{grad}(\gbo)$ and $\nabla \mathcal{N}(X)$ to hold after compressing $\nabla f(X)$:
\begin{proposition}
\label{orthogonal-proposition}
For any gradient estimate \( \gbo \in \mathbb{R}^{n \times p} \), the orthogonality between $\mathrm{grad}(\gbo)$ and $\nabla \mathcal{N}(X)$ preserves (see proof in Appendix~\ref{proof-orthogonal-prop}): 
\begin{align*}
\langle \mathrm{grad}(\gbo), \nabla \mathcal{N}(X) \rangle = 0, \quad \forall X\in \mathbb{R}^{n\times p}, \forall \gbo \in \mathbb{R}^{n\times p}.
\end{align*}
\end{proposition}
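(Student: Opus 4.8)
The plan is to reduce the claim to the elementary fact that the trace of the product of a skew-symmetric matrix and a symmetric matrix vanishes. First I would expand the inner product using the definition $\langle A, B\rangle = \mathrm{Tr}(A^\top B)$, writing
\begin{align*}
\langle \mathrm{grad}(\gbo), \nabla\mathcal{N}(X)\rangle
= \mathrm{Tr}\Big( X^\top \,\mathrm{skew}(\gbo X^\top)^\top\, X\,(X^\top X - I_p)\Big),
\end{align*}
and then use $\mathrm{skew}(M)^\top = -\mathrm{skew}(M)$ to pull out a sign, so the quantity equals $-\mathrm{Tr}\big(X^\top S X\,(X^\top X - I_p)\big)$ with $S := \mathrm{skew}(\gbo X^\top)$.

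Next I would identify the structure of the two $p\times p$ factors inside the trace. Set $A := X^\top S X$ and $B := X^\top X - I_p$. Since $S^\top = -S$, we have $A^\top = X^\top S^\top X = -A$, so $A$ is skew-symmetric; and $B$ is manifestly symmetric. The final step is the standard observation that $\mathrm{Tr}(AB) = \mathrm{Tr}((AB)^\top) = \mathrm{Tr}(B^\top A^\top) = -\mathrm{Tr}(BA) = -\mathrm{Tr}(AB)$, whence $\mathrm{Tr}(AB) = 0$. Substituting back gives $\langle \mathrm{grad}(\gbo), \nabla\mathcal{N}(X)\rangle = 0$, as desired.

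I do not anticipate a genuine obstacle here: the proof is a short chain of linear-algebra identities, and the only "idea" required is to recognize that conjugating a skew-symmetric matrix $S$ by $X$ on both sides preserves skew-symmetry, which immediately orthogonalizes it (under the trace inner product) against the symmetric matrix $X^\top X - I_p$. The one point worth stating carefully is that this holds for arbitrary $X \in \mathbb{R}^{n\times p}$ and arbitrary $\gbo \in \mathbb{R}^{n\times p}$ — no manifold feasibility and no structural assumption on $\gbo$ (such as $\gbo$ being an actual gradient) is used anywhere, which is exactly why compression of the Euclidean gradient is harmless to the orthogonality property. If desired, I would also remark that this recovers the original orthogonality \eqref{eq-orthogonality} of the Landing method as the special case $\gbo = \nabla f(X)$.
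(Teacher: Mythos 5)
Your proof is correct and uses essentially the same argument as the paper: both reduce the inner product, via cyclic invariance of the trace, to the trace of a product of a skew-symmetric matrix with a symmetric matrix (you pair the $p\times p$ matrices $X^\top \mathrm{skew}(\gbo X^\top)X$ and $X^\top X - I_p$, while the paper pairs the $n\times n$ matrices $\mathrm{skew}(\gbo X^\top)$ and $X(X^\top X - I_p)X^\top$ — an immaterial difference in grouping). Your closing remarks that no feasibility of $X$ or structure on $\gbo$ is used, and that the original orthogonality \eqref{eq-orthogonality} is the special case $\gbo = \nabla f(X)$, are accurate and consistent with how the paper uses the result.
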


\subsection{Error Feedback}
\label{sec:ef}

\textbf{Vanilla compression may lead to non-convergence.} Various approaches exist for compressing the Euclidean gradient \( \nabla f(X) \). The most straightforward approach is to directly compress \( \nabla f(X) \), i.e., \( \gbo = \mathcal{C}(\nabla f(X)) \), where \( \mathcal{C}(\cdot) \) is the contractive compressor satisfying Definition~\ref{compressor-definition}. However, we observe that such vanilla compression may lead to non-convergence, even in the case of the single-node Landing \eqref{landing-descent-direction}--\eqref{landing-iter}. The main intuition is that the tangent space of the Stiefel manifold and the compressed gradient   may become orthogonal, causing the iteration to stagnate. Figure~\ref{fig:ef-necessity} provides a schematic diagram of this phenomenon, while Proposition~\ref{prop-counter-example} provides the corresponding rigorous formulation.

\begin{proposition}  
\label{prop-counter-example}
There exists an \( L \)-smooth objective function \( f: \mathbb{R}^{n \times p} \to \mathbb{R} \), a contractive compressor satisfying Definition~\ref{compressor-definition}, and an initial point \( X^0 \in \mathbb{R}^{n \times p} \) such that the following update scheme:  
\begin{align*}
 \gbo & = \mathcal{C}(\nabla f(X)), \\
 \tilde{\Lambda}(X; \gbo) & =  \mathrm{grad}(\gbo) + \lambda \nabla \mathcal{N}(X), \\
 X &\leftarrow X - \gamma \tilde{\Lambda}(X;\gbo)
\end{align*}
results in the algorithm getting stagnant and failing to converge to the stationary solution. (See proof in Appendix~\ref{toy-example}.)
\end{proposition}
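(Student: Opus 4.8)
The plan is to construct an explicit low-dimensional counterexample in which the compressed gradient always lands in the ``bad'' direction that $\mathrm{grad}(\cdot)$ annihilates, so the Riemannian part of the descent direction vanishes identically while the iterate already sits on (or very near) the manifold, making the penalty part vanish as well. Concretely, I would take the simplest nontrivial case $n=2$, $p=1$, so that $\mathrm{St}(1,2)$ is the unit circle in $\RR{2}$ and $X$ is a column vector $x\in\RR{2}$. For a vector $x$ with $x\tranT x = 1$, the map $\gbo \mapsto \mathrm{grad}(\gbo) = \mathrm{skew}(\gbo x\tranT)x$ has a one-dimensional kernel: it kills precisely the component of $\gbo$ parallel to $x$, keeping only the tangential (rotational) component. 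So the key is to pick the geometry so that the compressor $\mathcal{C}$ maps the true gradient $\nabla f(x)$ to a vector pointing exactly along $x$ (the normal direction), whence $\mathrm{grad}(\mathcal{C}(\nabla f(x))) = 0$.

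The concrete steps are as follows. First, fix the initial point $X^0 = e_1 = (1,0)\tranT$, which lies on $\mathrm{St}(1,2)$, so $\nabla\mathcal{N}(X^0)=0$. Second, choose $f$ to be a simple quadratic, e.g.\ $f(x) = \tfrac12\, x\tranT A x$ with a diagonal $A = \mathrm{diag}(a_1,a_2)$, $a_1\neq a_2$, so that $\nabla f(x)=Ax$ is $L$-smooth with $L=\max\{|a_1|,|a_2|\}$, and at $X^0$ we have $\nabla f(X^0) = a_1 e_1$ — already purely normal. Third, take the compressor to be the coordinate-selection (Top-$1$/Rand-$1$-type) operator that keeps the first coordinate and zeros the second: $\mathcal{C}(v) = (v_1,0)\tranT$. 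One checks this satisfies Definition~\ref{compressor-definition} with $\alpha$ equal to the worst-case retained fraction — but since we only need \emph{a} valid contractive compressor and \emph{some} adversarial instance, I can even let $\mathcal{C}$ be deterministic (a fixed coordinate projection), for which $\|\mathcal{C}(v)-v\|_F^2 = v_2^2 \le \|v\|_F^2$, i.e.\ $\alpha$ can be taken arbitrarily small positive, or one restricts attention to the subclass of inputs where it contracts properly. Then $\gbo = \mathcal{C}(a_1 e_1) = a_1 e_1$, so by Proposition~\ref{orthogonal-proposition}'s underlying computation $\mathrm{grad}(\gbo) = \mathrm{skew}(a_1 e_1 e_1\tranT)e_1 = 0$. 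Fourth, conclude by induction: since $\tilde\Lambda(X^0;\gbo) = 0 + \lambda\nabla\mathcal{N}(X^0) = 0$, we get $X^1 = X^0$, and the same argument repeats verbatim, so $X^k \equiv e_1$ for all $k$. Finally, verify $e_1$ is \emph{not} a stationary point of the constrained problem: the Riemannian gradient $\mathrm{grad}f(e_1) = \mathrm{skew}(A e_1 e_1\tranT)e_1$ — I would compute this is nonzero precisely because $a_1 \neq a_2$ makes $A e_1 e_1\tranT = a_1 e_1 e_1\tranT$ ... wait, that is symmetric, giving zero; so $e_1$ happens to be a critical point of $f$ restricted to the circle. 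To fix this I instead perturb: either add a linear term $f(x) = \tfrac12 x\tranT A x + b\tranT x$ with $b$ chosen so $\nabla f(e_1) = Ae_1 + b$ is still parallel to $e_1$ (so the compression still kills the Riemannian part) but $A$ is non-diagonal enough that $\mathrm{grad}f(e_1)\neq 0$ — e.g.\ take $\nabla f(e_1) = c\,e_1$ with $c\neq 0$ while ensuring $\nabla f(e_1)$ has a genuine off-diagonal contribution is impossible for a single point, so the cleanest fix is to note the real obstruction is feasibility-plus-optimality jointly: choose $f$ so that $e_1$ is a stationary point of neither $f$ on the manifold nor... actually the sharpest route is to keep $X^0$ slightly \emph{off} the manifold.

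Because of that subtlety, the main obstacle is ensuring the stagnation point is genuinely non-stationary for the constrained problem. I would resolve it by taking $X^0 = (1+\delta)e_1$ for a small $\delta\neq 0$ with $(1+\delta)^2 - 1 = \epsilon'$ inside the safe region but nonzero, and choosing $f$ with $\nabla f(x) = a\, x$ a radial field (e.g.\ $f(x)=\tfrac{a}{2}\|x\|^2$): then $\nabla f(X^0)$ is parallel to $e_1$, the compressor $\mathcal{C}$ (keep coordinate 1) returns $\nabla f(X^0)$ unchanged, $\mathrm{grad}(\gbo) = \mathrm{skew}(a X^0 (X^0)\tranT) X^0 = 0$ since $X^0(X^0)\tranT$ is symmetric, and $\nabla\mathcal{N}(X^0) = X^0((X^0)\tranT X^0 - 1) = \epsilon'(1+\delta) e_1 \parallel e_1$, which is \emph{also} killed by $\mathcal{C}$ if we choose to compress the full $\tilde\Lambda$ — but in the stated scheme $\nabla\mathcal{N}$ is \emph{not} compressed, so $\tilde\Lambda(X^0;\gbo) = 0 + \lambda\epsilon'(1+\delta)e_1 \neq 0$ and the iterate \emph{does} move. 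Hence the genuinely correct construction must arrange $\nabla\mathcal{N}(X^0)=0$, i.e.\ start exactly on the manifold, and then force non-stationarity through $f$: the resolution is to use $n=2,p=1$ but a \emph{nonsmooth-looking-but-actually-smooth} trick is unnecessary — instead note $\mathrm{grad}f(x)=\mathrm{skew}(\nabla f(x)x\tranT)x$ is nonzero at $x=e_1$ iff $\nabla f(e_1)$ has a nonzero $e_2$-component, which is exactly the component $\mathcal{C}$ discards. So: pick $f$ with $\nabla f(e_1) = (c_1, c_2)\tranT$, $c_2\neq 0$; then $\mathrm{grad}f(e_1)\neq 0$ (non-stationary), yet $\gbo = \mathcal{C}(\nabla f(e_1)) = (c_1,0)\tranT = c_1 e_1$ gives $\mathrm{grad}(\gbo)=0$, and $\nabla\mathcal{N}(e_1)=0$, so $\tilde\Lambda(e_1;\gbo)=0$ and $X^k\equiv e_1$ forever despite $e_1$ not being stationary. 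A valid choice is $f(x) = \tfrac12\|x - e_2\|^2$ shifted so that ... simplest: $f(x)= \tfrac{L}{2}\|x\|^2 - L\, e_2\tranT x$, giving $\nabla f(x) = L(x - e_2)$, $L$-smooth, with $\nabla f(e_1) = L(e_1 - e_2)$ having nonzero $e_2$-component; then $\mathcal{C}(\nabla f(e_1)) = L e_1$, $\mathrm{grad}(L e_1) = L\,\mathrm{skew}(e_1 e_1\tranT)e_1 = 0$, $\nabla\mathcal{N}(e_1)=0$, hence stagnation at the non-stationary point $e_1$, completing the proof. I would then simply present this clean instance and defer the routine verifications (smoothness constant, the contractive inequality for the chosen $\mathcal{C}$, and $\mathrm{grad}f(e_1)\neq 0$) to the appendix.
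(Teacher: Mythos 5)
Your final construction is essentially the paper's own: the unit circle $\mathrm{St}(1,2)$, the iterate $X^0=e_1=(1,0)\tranT$ sitting exactly on the manifold so that $\nabla\mathcal{N}(X^0)=0$, and a greedy compressor that discards precisely the tangential ($e_2$-)component of $\nabla f(e_1)$, so that $\mathrm{grad}(\gbo)=\mathrm{skew}(\gbo e_1\tranT)e_1=0$ while $\mathrm{grad}f(e_1)\neq 0$; the paper instantiates this with $\nabla f(X)=(2,1)\tranT$ and Top-$1$. Your explicit choice of $f$ (which the paper leaves implicit) is a welcome addition, and your closing induction argument for permanent stagnation matches the paper's reasoning that a greedy compressor keeps outputting the same vector as long as the iterate does not move.

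There is, however, one genuine gap in your instantiation: the compressor. The fixed coordinate projection $\mathcal{C}(v)=(v_1,0)\tranT$ does \emph{not} satisfy Definition~\ref{compressor-definition} for any $\alpha>0$, since for $v=e_2$ one has $\|\mathcal{C}(v)-v\|_F^2=\|v\|_F^2$; your remark that ``$\alpha$ can be taken arbitrarily small positive'' does not rescue this, because the contractive inequality must hold for \emph{all} inputs and fails at $v=e_2$ for every $\alpha>0$. The legitimate choice is Top-$1$ (contractive with $\alpha=1/2$ in $\RR{2}$), but your $f(x)=\tfrac{L}{2}\|x\|^2-Le_2\tranT x$ gives $\nabla f(e_1)=(L,-L)\tranT$, whose two coordinates are tied in magnitude, so Top-$1$'s output depends on an unspecified tie-breaking rule (and under the unlucky tie-break the iterate would move). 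The fix is immediate: make the first coordinate strictly dominant, e.g.\ take $f(x)=L\|x\|^2-Le_2\tranT x$ so that $\nabla f(e_1)=(2L,-L)\tranT$ and Top-$1$ unambiguously returns $2Le_1$ (this is exactly why the paper uses $(2,1)\tranT$). With that repair your argument is complete and coincides with the paper's proof. A presentational note: the long sequence of false starts (the diagonal quadratic that is accidentally stationary, the off-manifold initialization that the uncompressed penalty term defeats) should be cut from a final write-up, although the off-manifold discussion does correctly identify why the counterexample must start exactly on the manifold.
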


\begin{remark}
Proposition~\ref{prop-counter-example} suggests that vanilla gradient compression results in non-convergence for optimization on the Stiefel manifold, even in a deterministic and single-node setting. In contrast, for unconstrained deterministic optimization, vanilla gradient compression is guaranteed to converge under the same conditions (see proof in Appendix~\ref{app:unconstraint-convergence}).   This highlights the additional challenges introduced by the Stiefel manifold constraint in algorithm design.
\end{remark}

\begin{remark}
    Error feedback technique can be motivated by diverse considerations. A similar work \cite{karimireddy_error_2019} discussed the necessity of error feedback in the context of signSGD compressor \cite{bernstein_signsgd_2018}. However, this analysis remains orthogonal to our scenario, as signSGD represents a specific type of compressor, and it can be proved that signSGD does not satisfy contractive compressor Definition~\ref{compressor-definition}. Consequently, the rationale for employing error feedback varies across different settings. Our theoretical analysis demonstrates that in our framework, it is precisely the orthogonal constraint that necessitates the use of error feedback.
\end{remark}

\textbf{Error feedback corrects non-convergence.} Error feedback can correct the non-convergence encountered with vanilla gradient compression (more details in Appendix~\ref{toy-example}). Inspired by momentum error feedback \cite{fatkhullin_momentum_2023}, we employ the following updates within each node $i$:
\begin{subequations}
\label{local-update-form}
\begin{align}
    \vbo_i^{k+1} &= (1-\eta)\vbo_i^k + \eta\nabla F(X^{k+1};\xi_i^{k+1}), \label{local-update-form-1}\\
    \cbo_i^{k} &= \mathcal{C}(\vbo_i^{k+1} - \gbo_i^k), \label{local-update-form-2}\\
    \gbo_i^{k+1} &= \gbo_i^k + \cbo_i^k, \label{local-update-form-3}
\end{align}
\end{subequations}
where \( \eta \) is the momentum rate. Each node sends \( \cbo_i^k \) to the master, and the master updates the Euclidean gradient:
\begin{equation}
    \gbo^{k+1} = \gbo^k + \frac{1}{N}\sum_{i=1}^N \cbo_i^k. \label{master-e-gradient-update}
\end{equation}
The compressed gradient \( \gbo^k \) provides a better approximation than vanilla compressed gradient \( \mathcal{C}(\nabla f(X^{k+1})) \) discussed in Section~\ref{sec:ef}. Once \( \gbo^k \) is obtained through the error feedback process \eqref{local-update-form}--\eqref{master-e-gradient-update}, the master node will compute the descent direction \( \tilde{\Lambda}(X;\gbo) \) as outlined in \eqref{eq-compressed-landing-descent-direction}.

\subsection{EF-Landing Algorithm}
EF-Landing comprises two key components: Euclidean gradient compression and error feedback, as outlined in the previous subsections. Additionally, before the master node computes the Landing descent direction \eqref{eq-compressed-landing-descent-direction}, we apply a clipping operation to \( \gbo^{k} \) to ensure that the integrated gradient estimate does not push the iterations beyond the safe region \( \mathrm{St}(p,n)^\epsilon \) (see Definition \ref{safe-region-defi}). Given a gradient bound \( L' \) (specified later in Section \ref{assumptions-section}), we clip \( \gbo^{k} \) as follows:
\begin{align}
\tilde{\gbo}^k = \min\left\{ 1, \frac{L'}{\|\gbo^k\|_F} \right\} \gbo^k. \label{clip-update}
\end{align}
Once clipped, \( \gbo^k \) is ready for the computation of the Landing descent direction \eqref{eq-compressed-landing-descent-direction} and the iteration \eqref{landing-iter}. The complete EF-Landing algorithm is presented in Algorithm \ref{EF-Landing-algorithm}.

\begin{algorithm}[t]
\caption{EF-Landing}
\label{EF-Landing-algorithm}
\begin{algorithmic}[1]
\REQUIRE starting point $X^0\in\RR{n\times p}$; gradient bound $L'$; step size $\gamma>0$; compressor $\mathcal{C}$; momentum $\eta\in(0,1]$; \vspace{1mm}
\STATE Each node initializes $\vbo_i^0=\nabla F(X^0;\xi_i^{0})$, $\gbo_i^0=\mathcal{C}(\vbo_i^0)$ for $i=1,\dots,N$; Master initializes $\gbo^0=\frac{1}{N}\sum_{i=1}^N \gbo_i^0$.\vspace{0.1mm}
\FOR{$k=0,1,\dots,K-1$ }
    \STATE Master clips the gradient via \eqref{clip-update};
    \STATE Master computes $\tilde{\Lambda}(X^k;\tilde{\gbo}^k)$ using \eqref{eq-compressed-landing-descent-direction};
    \STATE Master computes $X^{k+1}=X^{k}-\gamma \tilde{\Lambda}(X^k;\tilde{\gbo}^k)$ and broadcasts $X^{k+1}$ to all nodes.
    \FOR{ all nodes $i=1,\dots,N$ in parallel}
        \STATE Compute momentum $\vbo_i^{k+1}$ via \eqref{local-update-form-1};
        \STATE Compress $\cbo_i^k$ via \eqref{local-update-form-2} and send it to the master;
        \STATE Update local state $\gbo_i^{k+1}$ via \eqref{local-update-form-3}.
    \ENDFOR
    \STATE Master updates $\gbo^{k+1}$ via \eqref{master-e-gradient-update}.
\ENDFOR
\end{algorithmic}
\end{algorithm}

The EF-Landing algorithm is highly versatile. When the gradient oracle satisfies \( \nabla F(X; \xi_i) \equiv \nabla f_i(X) \), the problem reduces to deterministic optimization. The algorithm also applies when the compressor \( \mathcal{C}(\cdot) \) is the identity mapping, corresponding to distributed optimization without communication compression. Furthermore, it remains valid when the momentum rate is set to \( \eta = 1 \), representing algorithms without momentum. Notably, when all three conditions hold simultaneously, EF-Landing simplifies to the basic Landing algorithm with gradient clipping, with exact convergence results as established in \cite{ablin_infeasible_2024}. Table~\ref{tab:diverse-scenarios} shows several scenarios to which EF-Landing is applicable. 

\begin{table}[h]
\centering
\caption{EF-Landing for diverse scenarios}
\label{tab:diverse-scenarios}
\begin{tabular}{c|c c}
\toprule
Scenarios & \makecell[c]{Deterministic} & \makecell[c]{Stochastic\\(with momentum)}\\
\midrule
\makecell[c]{With\\compression} & \makecell[c]{EF-Landing\\(Det.\textsuperscript{$\sharp$})\\\(\sim \mathcal{O}(1/K)\)} & \makecell[c]{EF-Landing\\(Sto.\textsuperscript{$\flat$})\\\(\sim \mathcal{O}(1/\sqrt{NK})\)}\\
\addlinespace
\makecell[c]{Without\\compression} & \makecell[c]{Vanilla\\Landing\\\(\sim \mathcal{O}(1/K)\)} & \makecell[c]{Stochastic\\Landing\\\(\sim \mathcal{O}(1/\sqrt{NK})\)} \\
\bottomrule
\end{tabular}

{\footnotesize \,\textsuperscript{$\sharp$} Deterministic scenario. \textsuperscript{$\flat$} Stochastic scenario.}
\end{table}

In the EF-Landing method (Algorithm~\ref{EF-Landing-algorithm}), each node \( i \) transmits the compressed variable \(\cbo_i\) to the master node, while the master broadcasts the uncompressed variable \( X \) to all workers. This unidirectional compression scheme is well-established in the literature (e.g., \cite{fatkhullin_momentum_2023, stich_sparsified_2018, stich_error-feedback_2021, alistarh_qsgd_2017}), as upload costs typically dominate download costs in distributed settings. Notably, the communication of \( X \) can be removed in certain cases through system-level techniques such as the All-Reduce protocol~\cite{patarasuk2009bandwidth}, which eliminates the need for master-to-worker broadcasts. 

\section{EF-Landing Convergence Analysis}

This section begins by presenting the necessary conditions, and then proceeds to derive the convergence guarantees and establish the convergence rates for EF-Landing.

\subsection{Assumptions}
\label{assumptions-section}
The following assumptions are standard for optimization on the Stiefel manifold \cite{ablin_infeasible_2024} and distributed stochastic optimization.
\begin{assumption}[Global Gradient Bound] There exists constant $L^\prime$ such that the following inequality holds
    \label{global-gradient-bound-assumption}
    \begin{equation*}
        \|\nabla F(X;\xi_i)\|_F\leq L',\quad \forall X\in\mathrm{St}(p,n)^\epsilon,\, \forall \xi_i\sim\mathcal{D}_i.
    \end{equation*}
\end{assumption}
\begin{remark}
The assumption of a gradient bound \( L' \) is mild, as the search domain \( \mathrm{St}(p,n)^\epsilon \) is bounded, and we assume that no highly anomalous data will cause gradient explosion.
\end{remark}
\begin{assumption}[Lipschitz Smoothness]
\label{local-smoothness-assumption}
For each node $i$, $f_i(X):\RR{n\times p}\rightarrow \RR{}$ is differentiable and there exists a smooth constant $L_i$ such that for any $X,Y\in\mathrm{St}(p,n)^{\epsilon}$,
\begin{equation*}
    \|\nabla f_i(X)-\nabla f_i(Y)\|_F\leq L_i\|X-Y\|_F.
\end{equation*}
\end{assumption}
\begin{remark}
It is straightforward to show that the smoothness constant \( L \) of the global function \( f(X) \) satisfies \( L \leq \max_{i=1,\dots,N} \{ L_i \} \). Additionally, we define \( \tilde{L} := \sqrt{\frac{1}{N} \sum_{i=1}^N L_i^2} \) as the averaged smoothness constant.
\end{remark}

\begin{assumption}[Lower Bounded Objective Function]
\label{lower-bound-assumption}
    The objective function $f(X)$ is lower bounded by $f^*:=\inf_{X\in\mathrm{St}(p,n)^{\epsilon}} f(X)>-\infty$.
\end{assumption}

\begin{assumption}[Unbiasedness and Bounded Variance]
\label{bounded-variance-assumption}
There exists a $\sigma\geq 0$ such that for each node $i$, for any $X\in\mathrm{St}(p,n)^\epsilon$, it holds that
\begin{align*}
\Ebb_{\xi_i\sim\mathcal{D}_i}[\nabla F(X;\xi_i)-\nabla f_i(X)]=&\ 0,\\
\Ebb_{\xi_i\sim\mathcal{D}_i}\big[\|\nabla F(X;\xi_i)-\nabla f_i(X)\|_F^2\big]\leq &\ \sigma^2.
\end{align*}
\end{assumption}

\subsection{Supporting Lemmas}
This section establishes supporting lemmas for the convergence analysis of the EF-Landing algorithm. 

\textbf{Safe step size.} 
The main objective of the Landing algorithm is to select an appropriate step size that ensures the iterations remain within \( \mathrm{St}(p,n)^\epsilon \). In our analysis, we require only two assumptions—Assumptions \ref{global-gradient-bound-assumption} and \ref{local-smoothness-assumption}—to determine a uniform, non-vanishing step size that is valid throughout the entire iteration process.
\begin{lemma}[Uniform Safe Step Size]\label{uniform-safe-step-size-lemma}
Consider the update $\tilde{X}=X-\gamma \tilde{\Lambda}(X;\boldsymbol{g})$, where $\tilde{\Lambda}(X;\boldsymbol{g})$ is defined in \eqref{eq-compressed-landing-descent-direction} and $\gamma$ is a step size. For any $X\in\mathrm{St}(p,n)^\epsilon,\, \boldsymbol{g}\in\RR{n\times p}$, if $\|\boldsymbol{g}\|_F\leq L'$ and we choose 
\begin{equation*}
\begin{aligned}
    \gamma\leq \gamma_s:=\min\Big\{&\frac{\lambda(1-\epsilon)\epsilon}{(1+\epsilon)^2(L')^2+\lambda^2(1+\epsilon)\epsilon^2},\\
    &\sqrt{\frac{\epsilon}{2(1+\epsilon)^2(L')^2}},\frac{1}{2\lambda}\Big\},
\end{aligned}
\end{equation*}
then the next iteration $\tilde{X}$ remains in $\mathrm{St}(p,n)^\epsilon$ (see proof in Appendix~\ref{proof-uniform-safe-step}).
\end{lemma}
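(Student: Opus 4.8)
The plan is to track how a single Landing step changes the constraint-violation matrix $\tilde X^\top\tilde X-I_p$ and to show its norm never leaves $[0,\epsilon]$. Expanding $\tilde X=X-\gamma\tilde\Lambda(X;\boldsymbol{g})$ gives
\begin{align*}
\tilde X^\top\tilde X-I_p=(X^\top X-I_p)-\gamma\big(X^\top\tilde\Lambda+\tilde\Lambda^\top X\big)+\gamma^2\,\tilde\Lambda^\top\tilde\Lambda.
\end{align*}
The key simplification is that $X^\top\mathrm{grad}(\boldsymbol{g})=X^\top\mathrm{skew}(\boldsymbol{g}X^\top)X$ is skew-symmetric (a compression of the skew-symmetric matrix $\mathrm{skew}(\boldsymbol{g}X^\top)$), so it contributes nothing to the symmetric part of $X^\top\tilde\Lambda$; this is exactly the matrix form of Proposition~\ref{orthogonal-proposition}. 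Only the penalty term survives, and since $X^\top\nabla\mathcal{N}(X)=X^\top X(X^\top X-I_p)$ commutes with $X^\top X$, the first-order term collapses into the clean identity
\begin{align*}
\tilde X^\top\tilde X-I_p=\big(I_p-2\gamma\lambda X^\top X\big)\big(X^\top X-I_p\big)+\gamma^2\,\tilde\Lambda^\top\tilde\Lambda.
\end{align*}

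I would then bound the two summands separately in the norm of Definition~\ref{safe-region-defi} (the spectral norm). The matrices $I_p-2\gamma\lambda X^\top X$ and $X^\top X-I_p$ are polynomials in $X^\top X$, hence simultaneously diagonalizable; writing $\mu_1,\dots,\mu_p\in[-\epsilon,\epsilon]$ for the eigenvalues of $X^\top X-I_p$, the product has eigenvalues $\mu_j\big(1-2\gamma\lambda(1+\mu_j)\big)$. For $\gamma\le 1/(2\lambda)$ one verifies that $|1-2\gamma\lambda(1+\mu_j)|\le 1-2\gamma\lambda(1-\epsilon)$, so the first summand has spectral norm at most $\epsilon-2\gamma\lambda(1-\epsilon)\epsilon$: the penalty term strictly contracts the current violation. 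For the quadratic remainder I would use $X\in\mathrm{St}(p,n)^\epsilon$, i.e.\ $\|X\|^2=\|X^\top X\|\le 1+\epsilon$, together with the hypothesis $\|\boldsymbol{g}\|_F\le L'$, to get $\|\mathrm{grad}(\boldsymbol{g})\|\le(1+\epsilon)L'$ and $\lambda\|\nabla\mathcal{N}(X)\|\le\lambda\epsilon\sqrt{1+\epsilon}$, and therefore $\|\tilde\Lambda^\top\tilde\Lambda\|=\|\tilde\Lambda\|^2\le 2(1+\epsilon)^2(L')^2+2\lambda^2(1+\epsilon)\epsilon^2$.

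Putting the two estimates together via the triangle inequality,
\begin{align*}
\|\tilde X^\top\tilde X-I_p\|\le \epsilon-2\gamma\lambda(1-\epsilon)\epsilon+2\gamma^2\big((1+\epsilon)^2(L')^2+\lambda^2(1+\epsilon)\epsilon^2\big),
\end{align*}
and it remains to choose $\gamma$ small enough that the order-$\gamma^2$ term is absorbed by the contraction gain $2\gamma\lambda(1-\epsilon)\epsilon$: this is precisely the first entry in the minimum defining $\gamma_s$, the entry $1/(2\lambda)$ is what made the spectral contraction estimate valid, and the middle entry provides additional slack in controlling the quadratic remainder. I expect the main obstacle to be the careful handling of this order-$\gamma^2$ term: one must bound $\|\tilde\Lambda(X;\boldsymbol{g})\|$ using only membership in the safe region rather than exact feasibility, and one must verify the scalar inequality $|1-2\gamma\lambda(1+\mu)|\le 1-2\gamma\lambda(1-\epsilon)$ uniformly over $\mu\in[-\epsilon,\epsilon]$ --- the binding case being $\mu=\epsilon$, which is what forces $\gamma\le 1/(2\lambda)$ --- so that the contraction factor is genuinely uniform across all eigenvalues.
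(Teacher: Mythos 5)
Your proposal is correct, but it is a genuinely different (and more self-contained) route than the paper's. The paper's own proof in Appendix~\ref{proof-uniform-safe-step} does not redo the one-step analysis at all: it cites the pointwise safe-step-size bound of \cite{ablin_infeasible_2024}, which gives $\gamma^*(X;\boldsymbol{g})\geq\min\{\lambda(1-\epsilon)\epsilon/(a^2+\lambda^2(1+\epsilon)\epsilon^2),\sqrt{\epsilon/(2a^2)},1/(2\lambda)\}$ with $a=\|\mathrm{skew}(\boldsymbol{g}X\tranT)X\|_F$, and then contributes only the uniform estimate $a\leq(1+\epsilon)L'$ via Lemma~\ref{singular-values-bound-lemma} and $\|\boldsymbol{g}\|_F\leq L'$. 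You instead reconstruct the underlying one-step argument from scratch: the expansion of $\tilde{X}\tranT\tilde{X}-I_p$, the observation that $X\tranT\mathrm{skew}(\boldsymbol{g}X\tranT)X$ is skew-symmetric so the Riemannian part drops out of the symmetric first-order term, the factorization $(I_p-2\gamma\lambda X\tranT X)(X\tranT X-I_p)$, the eigenvalue contraction under $\gamma\leq1/(2\lambda)$, and the absorption of the $\gamma^2$ remainder — and your final inequality reproduces exactly the first entry of $\gamma_s$. Two remarks. First, your argument never uses the middle entry $\sqrt{\epsilon/(2(1+\epsilon)^2(L')^2)}$; since imposing $\gamma\leq\gamma_s$ only makes the step size smaller, this is harmless, and it in fact shows the lemma holds under a slightly weaker restriction than stated (that entry is an artifact of how the cited reference organizes its case analysis). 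Second, your derivation bounds the \emph{spectral} norm of $\tilde{X}\tranT\tilde{X}-I_p$; this matches the convention of Definition~\ref{safe-region-defi} as used in \cite{ablin_infeasible_2024} (the paper writes the safe-region norm without a subscript, reserving $\|\cdot\|_F$ for Frobenius), but it is worth stating explicitly, since a Frobenius-norm version of the safe region would require an extra dimension-dependent factor in the quadratic remainder. What your route buys is a fully verifiable, citation-free proof; what the paper's route buys is brevity and exact alignment of $\gamma_s$ with the constants already established in the prior work.
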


\textbf{Merit function.} We introduce a merit function to facilitate convergence analysis of EF-Landing:
\begin{align}
m(X) & = f(X) - h(X) + \mu \mathcal{N}(X), \label{merit-function}  \\
\mbox{where}\ \ h(x) &= \frac{1}{2} \langle \mathrm{sym}(X^\top \nabla f(X)), X^\top X - I_p \rangle. \nonumber 
\end{align}
The constant \( \mu \) is a hyperparameter, and its value will be specified in Lemma \ref{merit-function-bound}. The merit function \( m(X) \) is \(L_m\)-smooth when restricted to the bounded safe region \( \mathrm{St}(p,n)^\epsilon \), with an upper bound for \( L_m \) provided by \cite{ablin_infeasible_2024}.  

Furthermore, under Assumption \ref{lower-bound-assumption}, since both \( h(X) \) and \( \mathcal{N}(X) \) are bounded for \( X \in \mathrm{St}(p,n)^\epsilon \), the merit function \( m(X) \) is also lower-bounded, with its infimum denoted as \( m^* \).  In this work, we derive a lower bound for \( \langle \nabla m(X), \tilde{\Lambda}(X; \gbo) \rangle \). This bound plays a critical role in analyzing the impact of compression error, as it explicitly isolates the difference term \( \|\gbo - \nabla f(X)\|_F^2 \).  
\begin{lemma}[Merit Function Bound]
    \label{merit-function-bound}
    For all \( X \in \mathrm{St}(p,n)^\epsilon \) and \( \gbo \in \RR{n \times p} \), the inner product between the descent direction \eqref{eq-compressed-landing-descent-direction} and the gradient of the merit function \eqref{merit-function} satisfies the lower bound (see proof in Appendix~\ref{proof-merit-function})
    \begin{align*}
        \langle \tilde{\Lambda}(X;\gbo),& \nabla m(X)\rangle \geq \frac{1}{4}\|\mathrm{grad}f(X)\|_F^2+\lambda\mu\mathcal{N}(X)\\
        +~& \frac{1}{4}\|\mathrm{skew}(\gbo X\tranT)X\|_F^2
        -\|\gbo-\nabla f(X)\|_F^2,
    \end{align*}
    provided that $\mu\geq\frac{2}{3-4\epsilon}\big(L(1-\epsilon)+3\sqrt{1+\epsilon}L'+2\hat{L}^2\frac{1+\epsilon}{\lambda}\big)$, where \( L \) and \( L' \) are defined in Assumptions \ref{local-smoothness-assumption} and \ref{global-gradient-bound-assumption}, respectively, \( \hat{L} = \max\{L, L'\} \), and \( \epsilon < 3/4 \).  
\end{lemma}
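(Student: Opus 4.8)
The plan is to differentiate the merit function explicitly, substitute $\tilde\Lambda(X;\gbo)=\mathrm{grad}(\gbo)+\lambda\nabla\mathcal N(X)$, and then treat the Riemannian-gradient part, the penalty part, and the compression error $D:=\gbo-\nabla f(X)$ separately. Writing $E:=X^\top X-I_p$, the guiding principle is that every term created by replacing $\nabla f(X)$ with $\gbo$, or by leaving the manifold, carries a genuine factor of $E$, so it scales like $\|D\|_F^2$ or like $\mathcal N(X)=\tfrac14\|E\|_F^2$ and never like $\sqrt{\mathcal N(X)}$. First I would compute $\nabla h(X)$: differentiating $h(X)=\tfrac12\langle\nabla f(X),\nabla\mathcal N(X)\rangle$ gives $\nabla h(X)=\tfrac12\nabla^2 f(X)[\nabla\mathcal N(X)]+\tfrac12\nabla f(X)E+X\,\mathrm{sym}(X^\top\nabla f(X))$, hence
\begin{align*}
\nabla m(X)=\underbrace{\nabla f(X)-X\,\mathrm{sym}(X^\top\nabla f(X))}_{=:\,P_X\nabla f(X)}+\mu\nabla\mathcal N(X)+\tilde R(X),
\end{align*}
with $P_X$ the Euclidean orthogonal projection onto the tangent space of $\mathrm{St}(p,n)$ at $X$ and $\tilde R(X):=-\tfrac12\nabla^2 f(X)[\nabla\mathcal N(X)]-\tfrac12\nabla f(X)E$ --- producing the tangential object $P_X\nabla f(X)$ is exactly the purpose of the $-h$ term. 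By \cref{global-gradient-bound-assumption,local-smoothness-assumption} and $\|X\|_2^2\le1+\epsilon$, $\|E\|_F=2\sqrt{\mathcal N(X)}$ on $\mathrm{St}(p,n)^\epsilon$, one gets $\|\tilde R(X)\|_F\le\tfrac12(L'+L\sqrt{1+\epsilon})\|E\|_F=O(\sqrt{\mathcal N(X)})$.

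Next I split $\langle\tilde\Lambda(X;\gbo),\nabla m(X)\rangle$ along the three summands of $\nabla m(X)$. For the $\mu\nabla\mathcal N$ group, \cref{orthogonal-proposition} eliminates $\langle\mathrm{grad}(\gbo),\nabla\mathcal N(X)\rangle$, and since $\|\nabla\mathcal N(X)\|_F^2=\|E\|_F^2+\mathrm{Tr}(E^3)\ge(1-\epsilon)\|E\|_F^2=4(1-\epsilon)\mathcal N(X)$, this group is $\ge4(1-\epsilon)\lambda\mu\mathcal N(X)=\lambda\mu\mathcal N(X)+(3-4\epsilon)\lambda\mu\mathcal N(X)$, the second summand being the budget from which all $\mathcal N(X)$-errors get paid. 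For the $P_X\nabla f$ group, the same skew-times-symmetric cancellation underlying \cref{orthogonal-proposition} gives $\langle\mathrm{grad}(\gbo),X\,\mathrm{sym}(X^\top\nabla f(X))\rangle=0$, so $\langle\mathrm{grad}(\gbo),P_X\nabla f(X)\rangle=\langle\mathrm{grad}(\gbo),\nabla f(X)\rangle=\langle\mathrm{skew}(\gbo X^\top),\mathrm{skew}(\nabla f(X)X^\top)\rangle$; polarizing, then using $\|A\|_F^2\ge\tfrac{1}{1+\epsilon}\|AX\|_F^2$ (for $A=\mathrm{skew}(\gbo X^\top)$ and $A=\mathrm{skew}(\nabla f(X)X^\top)$) and $\|\mathrm{skew}(DX^\top)\|_F^2\le(1+\epsilon)\|D\|_F^2$, yields
\begin{align*}
\langle\mathrm{grad}(\gbo),\nabla f(X)\rangle\ \ge\ \tfrac{1}{2(1+\epsilon)}\Big(\|\mathrm{grad}f(X)\|_F^2+\|\mathrm{skew}(\gbo X^\top)X\|_F^2\Big)-\tfrac{1+\epsilon}{2}\|D\|_F^2 ,
\end{align*}
while $\lambda\langle\nabla\mathcal N(X),P_X\nabla f(X)\rangle$ collapses --- once the two $2\lambda h(X)$ pieces cancel --- to $-\lambda\mathrm{Tr}\big(E^2\,\mathrm{sym}(X^\top\nabla f(X))\big)\ge-4\sqrt{1+\epsilon}L'\lambda\mathcal N(X)$, using $E^2\succeq0$. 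For the $\tilde R$ group, Cauchy--Schwarz and Young's inequality bound $\langle\mathrm{grad}(\gbo),\tilde R(X)\rangle\ge-\rho\|\mathrm{skew}(\gbo X^\top)X\|_F^2-\tfrac{1}{4\rho}\|\tilde R(X)\|_F^2$ and $\lambda\langle\nabla\mathcal N(X),\tilde R(X)\rangle\ge-\lambda\|\nabla\mathcal N(X)\|_F\|\tilde R(X)\|_F$, both of which are $\ge-(\mathrm{const})\mathcal N(X)$ with constants of order $\hat L^2$ and $\lambda\hat L^2\sqrt{1+\epsilon}$ respectively.

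Combining the three groups, I would pick $\rho=\tfrac{1}{2(1+\epsilon)}-\tfrac14\ge0$ so that the coefficient of $\|\mathrm{skew}(\gbo X^\top)X\|_F^2$ becomes exactly $\tfrac14$, while $\tfrac{1}{2(1+\epsilon)}\ge\tfrac14$ already delivers $\tfrac14\|\mathrm{grad}f(X)\|_F^2$; since $\epsilon<1$ we have $\tfrac{1+\epsilon}{2}\le1$, so the $D$-term is absorbed into $-\|\gbo-\nabla f(X)\|_F^2$. The $\mathcal N(X)$-errors from the last two groups are then dominated by $(3-4\epsilon)\lambda\mu\mathcal N(X)$ exactly when $\mu\ge\tfrac{2}{3-4\epsilon}\big(L(1-\epsilon)+3\sqrt{1+\epsilon}L'+2\hat L^2\tfrac{1+\epsilon}{\lambda}\big)$ --- the stated hypothesis --- and the remaining $\lambda\mu\mathcal N(X)$ is precisely the term kept in the claim. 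The step I expect to be the main obstacle is the error bookkeeping in the second and third groups: one has to verify that after the $2\lambda h(X)$ cancellation no term survives at order $\sqrt{\mathcal N(X)}$, and keep the constants sharp enough that neither $\|\mathrm{grad}f(X)\|_F^2$ nor $\|\mathrm{skew}(\gbo X^\top)X\|_F^2$ loses its $\tfrac14$ coefficient uniformly over $\epsilon\in(0,3/4)$ --- this is where the explicit value of $\mu$ is pinned down, and it is exactly the design of $h$ through $\mathrm{sym}(X^\top\nabla f(X))$ that makes the dangerous terms factor through $E$. The appearance of $\nabla^2 f$ in $\nabla h$ is controlled via \cref{local-smoothness-assumption} as in \cite{ablin_infeasible_2024}, which also supplies the $L_m$-smoothness of $m$ quoted above.
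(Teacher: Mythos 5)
Your proposal is essentially the paper's own proof, reorganized: you use the same explicit formula for $\nabla m(X)$ (the adjoint-Jacobian term in the paper's expression is exactly your $\tfrac12\nabla^2 f(X)[\nabla\mathcal{N}(X)]+\tfrac12\nabla f(X)E$), the same polarization identity on $\langle\mathrm{skew}(\gbo X^{\top}),\mathrm{skew}(\nabla f(X)X^{\top})\rangle$ to produce the two $\tfrac14$-terms and the $-\|\gbo-\nabla f(X)\|_F^2$ penalty, the same skew-times-symmetric cancellations, and the same Young-inequality tuning to keep the coefficient of $\|\mathrm{skew}(\gbo X^{\top})X\|_F^2$ at $\tfrac14$; only the grouping of the resulting terms differs. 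One bookkeeping caveat: by placing $-\tfrac12\nabla^2 f(X)[\nabla\mathcal{N}(X)]$ inside the remainder $\tilde R$ and bounding $\lambda\langle\nabla\mathcal{N}(X),\tilde R(X)\rangle$ by Cauchy--Schwarz, the Hessian piece costs you $-2\lambda L(1+\epsilon)\mathcal{N}(X)$ (since you must upper-bound $\|\nabla\mathcal{N}(X)\|_F^2\le 4(1+\epsilon)\mathcal{N}(X)$ there), whereas the paper keeps $\lambda\mu\|\nabla\mathcal{N}(X)\|_F^2-\tfrac{\lambda}{2}\langle H_X v,v\rangle\ge\lambda(\mu-\tfrac L2)\|\nabla\mathcal{N}(X)\|_F^2\ge 4\lambda(\mu-\tfrac L2)(1-\epsilon)\mathcal{N}(X)$ together and pays only $-2\lambda L(1-\epsilon)\mathcal{N}(X)$; as written your route therefore needs $L(1+\epsilon)$ in place of $L(1-\epsilon)$ in the threshold for $\mu$, and to land on the stated constant you should pair the Hessian quadratic form with the $\mu\|\nabla\mathcal{N}(X)\|_F^2$ term before invoking the singular-value bounds, exactly as the paper does.
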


\subsection{Main Convergence Theorem}
Given an arbitrary momentum coefficient $\eta > 0$, we define a Lyapunov function as follows:
\vspace{-0.5em}
\begin{align}
&\mathcal{L}^{k}\hspace{-0.8mm}:=\hspace{-0.8mm}m(X^k) \hspace{-0.5mm}-\hspace{-0.8mm} m^* \hspace{-0.8mm}+ \hspace{-0.8mm} \frac{c_1\gamma}{\theta}\tilde{G}^k \hspace{-0.8mm}+ \hspace{-0.8mm} \frac{2c_1\gamma\eta\beta}{\theta}\tilde{P}^k \hspace{-0.8mm}+ \hspace{-0.8mm} \frac{c_2\gamma}{\eta}P^k\label{Lyapunov-function} \\
&\mbox{where}\quad \tilde{G}^k:=\frac{1}{N}\sum\nolimits_{i=1}^N \|\gbo_i^k- \vbo_i^k\|_F^2, \nonumber \\
&\hspace{12mm} \tilde{P}^k:=\frac{1}{N}\sum\nolimits_{i=1}^N \|\vbo_i^k-\nabla f_i(X^k)\|_F^2, \nonumber  \\
&\hspace{12mm} P^k:=\|\vbo^k-\nabla f(X^k)\|_F^2, \nonumber 
\end{align}
where \(\vbo^k := \frac{1}{N} \sum_{i=1}^N \vbo_i^k\), and \(m^*\) denotes the infimum of the merit function in \eqref{merit-function}. Constants $\theta, \beta, c_1, c_2$ will be determined later. 
With the above Lyapunov function, we derive the main convergence theorem as follows.
\begin{theorem}[Main Convergence Theorem]
\label{general-convergence-theorem}
Letting Assumptions \ref{global-gradient-bound-assumption}, 
\ref{local-smoothness-assumption}, \ref{lower-bound-assumption} and \ref{bounded-variance-assumption} hold, if compressor $\mathcal{C}$ satisfies Definition~\ref{compressor-definition}, for an arbitrary momentum coefficient  $\eta\in(0,1]$, by running Algorithm~\ref{EF-Landing-algorithm} for $K$ iterations, with proper learning rate $\gamma$ (see Appendix~\ref{general-convergence-proof-appendix}), we have
\begin{align*}
    &\frac{1}{4K}\sum_{k=0}^{K-1}\Ebb[\|\mathrm{grad}f(X^k)\|_F^2]+\frac{\lambda\mu}{2K}\sum_{k=0}^{K-1}\Ebb[\mathcal{N}(X^k)]\\
    \leq~&\frac{\mathcal{L}^0}{\gamma K} +\frac{c_1\eta^2(1-\alpha)\sigma^2}{\theta}+\frac{2c_1\eta^3\beta\sigma^2}{\theta}+\frac{c_2\eta\sigma^2}{N},
\end{align*}
In the inequality, $\mathcal{L}^k$ is defined in \eqref{Lyapunov-function}, $\theta$ and $\beta$ are two scalars  defined as $\theta:=1-\sqrt{1-\alpha}$ and $\beta:=\frac{1-\alpha}{1-\sqrt{1-\alpha}}$. $c_1,c_2$ are two constants differently specified in three cases:
\vspace{-0.3cm}
\begin{itemize}
    \item When the momentum coefficient  $\eta=1$ and the variance of gradients $\sigma^2=0$, the momentum and stochastic error is $0$, and $c_1,c_2$ are specified as $c_1=1,c_2=0$.
    \vspace{-0.2cm}
    \item When the contractive factor $\alpha=1$, the compression error is $0$, and $c_1, c_2$ are specified as $c_1=0, c_2=1$.
    \vspace{-0.2cm}
    \item Otherwise, we specify $c_1=c_2=2$. 
\end{itemize}
\vspace{-0.3cm}
\end{theorem}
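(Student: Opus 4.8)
### Proof Proposal

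\textbf{Overall strategy.} The plan is to run a standard descent-lemma argument on the merit function $m(X)$, but with the compressed, clipped gradient $\tilde{\gbo}^k$ in place of the true gradient, and to absorb the compression and stochastic errors into the Lyapunov function $\mathcal{L}^k$ of \eqref{Lyapunov-function}. The backbone is Lemma~\ref{merit-function-bound}, which already isolates the crucial error term $\|\gbo - \nabla f(X)\|_F^2$; the three auxiliary potentials $\tilde G^k$, $\tilde P^k$, $P^k$ are exactly the pieces needed to dominate this error term across iterations. Concretely, I would first establish a one-step inequality of the form
\begin{align*}
m(X^{k+1}) \leq m(X^k) - \gamma\langle \tilde\Lambda(X^k;\tilde\gbo^k), \nabla m(X^k)\rangle + \frac{L_m\gamma^2}{2}\|\tilde\Lambda(X^k;\tilde\gbo^k)\|_F^2,
\end{align*}
valid because Lemma~\ref{uniform-safe-step-size-lemma} (applied with $\gamma\le\gamma_s$) keeps every iterate in $\mathrm{St}(p,n)^\epsilon$, where $m$ is $L_m$-smooth. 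Here one must check that the clipping \eqref{clip-update} is harmless: since $\|\tilde\gbo^k\|_F\le L'$ by construction, the safe-step lemma applies verbatim, and the bound $\|\tilde\Lambda(X^k;\tilde\gbo^k)\|_F^2 \lesssim (L')^2 + \lambda^2\epsilon^2$ controls the quadratic term.

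\textbf{Handling the errors.} Plugging Lemma~\ref{merit-function-bound} into the one-step inequality produces a $-\gamma\|\gbo^k - \nabla f(X^k)\|_F^2$-type penalty on the right. I would split this error through the triangle inequality into a compression part, controlled via $\tilde G^k = \frac1N\sum_i\|\gbo_i^k - \vbo_i^k\|_F^2$, and a momentum/variance part, controlled via $\tilde P^k = \frac1N\sum_i\|\vbo_i^k - \nabla f_i(X^k)\|_F^2$ and $P^k = \|\vbo^k - \nabla f(X^k)\|_F^2$ (the latter captures the variance reduction from averaging over $N$ nodes, giving the $\sigma^2/N$ term and hence the linear speedup). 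For each of these three potentials I would derive a contraction-with-remainder recursion: for $\tilde G^k$ the contraction factor is $\sqrt{1-\alpha}$ (from the contractive compressor, Definition~\ref{compressor-definition}, together with Young's inequality to peel off the drift $\|\vbo_i^{k+1}-\vbo_i^k\|_F$ and the iterate movement $\|X^{k+1}-X^k\|_F \le \gamma\|\tilde\Lambda\|_F$), which is why $\theta := 1-\sqrt{1-\alpha}$ and $\beta := (1-\alpha)/(1-\sqrt{1-\alpha})$ appear; for $\tilde P^k$ and $P^k$ the contraction factor is $1-\eta$ (from the momentum recursion \eqref{local-update-form-1}), with remainder terms $\eta^2\sigma^2$ and an iterate-movement term $\propto \tilde L^2\|X^{k+1}-X^k\|_F^2/\eta$. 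Then I would form the telescoping sum $\sum_{k=0}^{K-1}(\mathcal{L}^{k+1}-\mathcal{L}^k)$: the coefficients $c_1\gamma/\theta$, $2c_1\gamma\eta\beta/\theta$, $c_2\gamma/\eta$ are chosen precisely so that the positive remainder terms in the $m(X^{k+1})$ descent are exactly cancelled by the contraction surplus of the auxiliary potentials, up to the step-size-independent constants $\eta^2(1-\alpha)\sigma^2$, $\eta^3\beta\sigma^2$, $\eta\sigma^2/N$.

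\textbf{Assembling and the case split.} After telescoping, $\mathcal{L}^0 - \mathcal{L}^K \ge \mathcal{L}^0$ (since $\mathcal{L}^K\ge 0$ as $m\ge m^\star$ and the potentials are nonnegative), leaving
\begin{align*}
\frac{\gamma}{K}\sum_{k=0}^{K-1}\Big(\tfrac14\Ebb\|\mathrm{grad}f(X^k)\|_F^2 + \tfrac{\lambda\mu}{2}\Ebb[\mathcal{N}(X^k)]\Big) \le \frac{\mathcal{L}^0}{K} + \gamma\Big(\text{error constants}\Big),
\end{align*}
and dividing by $\gamma$ gives the claimed bound. The three-way specification of $c_1,c_2$ is a bookkeeping device: when $\eta=1,\sigma^2=0$ the momentum potentials vanish identically so only $\tilde G^k$ is needed ($c_1=1,c_2=0$); when $\alpha=1$ the compressor is lossless so $\tilde G^k\equiv 0$ and only the momentum potentials matter ($c_1=0,c_2=1$); the general case needs all of them with a factor $2$ of slack for the Young-inequality splits ($c_1=c_2=2$). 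In each case one must verify the learning-rate constraint is simultaneously compatible with $\gamma\le\gamma_s$ (safe region), with the $L_m$-smoothness descent, and with each of the three potential recursions; this is where the ``proper learning rate'' of Appendix~\ref{general-convergence-proof-appendix} is pinned down.

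\textbf{Main obstacle.} I expect the hard part to be the coupled recursion for $\tilde G^k$: the momentum drift $\|\vbo_i^{k+1}-\vbo_i^k\|_F$ and the iterate movement $\|X^{k+1}-X^k\|_F$ both feed back into $\tilde G^{k+1}$, so the contraction estimate must be carried out carefully (with a well-chosen Young parameter reproducing the $\sqrt{1-\alpha}$ rate rather than a cruder $1-\alpha/2$) and cross-linked with the descent of $m$ and the momentum potentials so that all drift terms are simultaneously absorbable by one common step size. Getting the constants $\theta,\beta,c_1,c_2$ and $\mu$ to close the loop without circularity — each bound's remainder paid for by another potential's surplus — is the delicate accounting that makes the theorem ``sharp.''
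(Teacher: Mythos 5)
Your overall architecture coincides with the paper's proof: descent of the merit function via its $L_m$-smoothness on the safe region, Lemma~\ref{merit-function-bound} to expose the error $\|\tilde{\gbo}^k-\nabla f(X^k)\|_F^2$, the split of that error into the compression potential $\tilde{G}^k$ and the momentum/stochastic potentials $\tilde{P}^k,P^k$, contraction recursions with factors $\sqrt{1-\alpha}$ and $1-\eta$ (hence $\theta$ and $\beta$), the Lyapunov function with exactly the coefficients $c_1\gamma/\theta$, $2c_1\gamma\eta\beta/\theta$, $c_2\gamma/\eta$, and telescoping; your reading of the three-way choice of $c_1,c_2$ is also the paper's.

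There is, however, one genuine gap: the handling of the second-order term $\frac{L_m\gamma^2}{2}\|\tilde{\Lambda}(X^k;\tilde{\gbo}^k)\|_F^2$ and of the iterate-movement terms $\propto\gamma^2\|\tilde{\Lambda}\|_F^2$ that the recursions for $\tilde{G}^k,\tilde{P}^k,P^k$ feed back. You propose to control the quadratic term by the crude bound $\|\tilde{\Lambda}\|_F^2\lesssim (L')^2+\lambda^2\epsilon^2$ and assert that all positive remainders other than the $\sigma^2$-terms are cancelled by the contraction surplus of the auxiliary potentials. Neither works as stated: a constant bound leaves a residual $O(L_m\gamma)$ term in the final average that is absent from the theorem (and would destroy the exact $O(1/K)$ deterministic rate of Corollary~\ref{cor-landing-deterministic}), while the contraction surplus of $\tilde{G},\tilde{P},P$ can only absorb terms proportional to those potentials, not terms proportional to $\|\tilde{\Lambda}\|_F^2$. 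The paper's mechanism is different: by the orthogonality of Proposition~\ref{orthogonal-proposition} one has $\|\tilde{\Lambda}(X^k;\tilde{\gbo}^k)\|_F^2\le \|\mathrm{skew}(\tilde{\gbo}^k(X^k)^{\top})X^k\|_F^2+4\lambda^2(1+\epsilon)\mathcal{N}(X^k)$, and these two pieces are then dominated, via the step-size condition of Appendix~\ref{general-convergence-proof-appendix}, by the negative terms $-\frac{\gamma}{4}\|\mathrm{skew}(\tilde{\gbo}^k(X^k)^{\top})X^k\|_F^2$ and $-\gamma\lambda\mu\mathcal{N}(X^k)$ that Lemma~\ref{merit-function-bound} supplies --- this is exactly why that lemma's lower bound carries the $\frac{1}{4}\|\mathrm{skew}(\gbo X^{\top})X\|_F^2$ term, which your proposal never invokes. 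A smaller omission: you justify the clipping only with respect to the safe-step lemma; you also need $\|\tilde{\gbo}^k-\nabla f(X^k)\|_F\le\|\gbo^k-\nabla f(X^k)\|_F$ (clipping is the projection onto the ball of radius $L'$, which contains $\nabla f(X^k)$ by Assumption~\ref{global-gradient-bound-assumption}), since the three potentials track the unclipped $\gbo^k$ while the merit-function bound is applied with the clipped $\tilde{\gbo}^k$.
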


\begin{remark}
The convergence theorem above is highly versatile. By appropriately selecting hyper-parameters such as the momentum coefficient \(\eta\), gradient variance \(\sigma\), and compressor factor \(\alpha\), our analysis applies to both deterministic and stochastic settings, covering algorithms based on gradient descent and momentum-based gradient descent, with or without communication compression.
\end{remark}

\subsection{Convergence Rate in Deterministic Scenario}
In this subsection, we set \(\sigma^2 = 0\) in Assumption \ref{bounded-variance-assumption}. We further set \(\eta = 1\) to completely eliminate both momentum and stochastic error. Under these conditions, the constants in Theorem \ref{general-convergence-theorem} simplify to \( c_1 = 1 \) and \( c_2 = 0 \), allowing the theorem to naturally recover various convergence results in deterministic scenarios.

\begin{theorem}[EF-Landing in Deterministic Scenarios]
\label{thm-ef-landing-deterministic}
    Letting Assumptions \ref{global-gradient-bound-assumption},  \ref{local-smoothness-assumption}, \ref{lower-bound-assumption} and \ref{bounded-variance-assumption} hold, if compressor $\mathcal{C}$ satisfies Definition~\ref{compressor-definition}, $\sigma^2=0$, and $\eta=1$, by running Algorithm \ref{EF-Landing-algorithm} for $K$ iterations, with proper constant learning rate $\gamma$ (see Appendix~\ref{details-two-deterministic-appendix}), we have
\begin{align*}
    \frac{1}{K}\sum_{k=0}^{K-1}\Ebb[\|\mathrm{grad}f(X^k)\|_F^2]\leq~&\frac{4(m(X^0)-m^*)}{K\gamma}+\frac{4\Ebb[\tilde{G}^0]}{K\theta},\\
    \frac{1}{K}\sum_{k=0}^{K-1}\Ebb[\mathcal{N}(X^k)]\leq~& \frac{2(m(X^0)-m^*)}{K\gamma\lambda\mu}+\frac{2\Ebb[\tilde{G}^0]}{K\theta\lambda\mu},
\end{align*}
where $\theta$ and $\beta$ are defined in Theorem \ref{general-convergence-theorem}. 

\end{theorem}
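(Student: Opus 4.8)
The plan is to derive Theorem~\ref{thm-ef-landing-deterministic} as a direct specialization of the Main Convergence Theorem~\ref{general-convergence-theorem}, so the work is essentially bookkeeping: substitute $\sigma^2 = 0$, $\eta = 1$, $c_1 = 1$, $c_2 = 0$ into the general bound and simplify. With these choices the three error terms on the right-hand side of Theorem~\ref{general-convergence-theorem} all vanish (each carries a factor of $\sigma^2$ or $c_2$, hence is zero), leaving the clean inequality
\begin{align*}
\frac{1}{4K}\sum_{k=0}^{K-1}\Ebb[\|\mathrm{grad}f(X^k)\|_F^2] + \frac{\lambda\mu}{2K}\sum_{k=0}^{K-1}\Ebb[\mathcal{N}(X^k)] \leq \frac{\mathcal{L}^0}{\gamma K}.
\end{align*}
I would then unpack $\mathcal{L}^0$ from its definition \eqref{Lyapunov-function}: with $\eta=1$, $c_1=1$, $c_2=0$, only the terms $m(X^0) - m^\star$ and $\frac{\gamma}{\theta}\tilde G^0$ survive (the $\tilde P^k$ term has coefficient $2c_1\gamma\eta\beta/\theta$ but also vanishes because $\vbo_i^0 = \nabla F(X^0;\xi_i^0) = \nabla f_i(X^0)$ when $\sigma^2=0$, so $\tilde P^0 = 0$; and the $P^k$ term has coefficient $c_2 = 0$). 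Hence $\mathcal{L}^0 = m(X^0) - m^\star + \frac{\gamma}{\theta}\tilde G^0$, and dividing by $\gamma K$ gives $\frac{m(X^0)-m^\star}{\gamma K} + \frac{\tilde G^0}{\theta K}$.

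Next I would split the single combined inequality into the two advertised bounds. Since both $\|\mathrm{grad}f(X^k)\|_F^2 \geq 0$ and $\mathcal{N}(X^k) \geq 0$, dropping the non-negative $\mathcal{N}$-term and multiplying through by $4$ yields the first claim
\begin{align*}
\frac{1}{K}\sum_{k=0}^{K-1}\Ebb[\|\mathrm{grad}f(X^k)\|_F^2] \leq \frac{4(m(X^0)-m^\star)}{K\gamma} + \frac{4\,\Ebb[\tilde G^0]}{K\theta},
\end{align*}
while dropping the $\mathrm{grad}f$-term and multiplying by $2/(\lambda\mu)$ yields the second. (I would note $\tilde G^0$ is random only through the single compression at initialization, $\gbo_i^0 = \mathcal{C}(\vbo_i^0)$, which is why the statement keeps the expectation.) The only subtlety worth flagging is that Theorem~\ref{general-convergence-theorem} requires a ``proper learning rate $\gamma$'': I would confirm that the learning-rate prescription in Appendix~\ref{general-convergence-proof-appendix} is consistent with $\eta = 1$, $\sigma^2 = 0$ (it should be, since those are explicitly one of the three allowed cases), and that it respects the safe step size $\gamma \leq \gamma_s$ from Lemma~\ref{uniform-safe-step-size-lemma} so that all iterates stay in $\mathrm{St}(p,n)^\epsilon$ and the merit-function machinery of Lemma~\ref{merit-function-bound} applies; this is exactly where Assumptions~\ref{global-gradient-bound-assumption} and~\ref{local-smoothness-assumption} enter through the clipping bound $L'$.

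There is essentially no hard obstacle here — the theorem is a corollary by design, and the ``main convergence theorem'' has already done the heavy lifting (the telescoping of the Lyapunov function, the contraction estimates on $\tilde G^k, \tilde P^k, P^k$ via Definition~\ref{compressor-definition}, and the merit-function descent inequality from Lemma~\ref{merit-function-bound}). If I had to name the most delicate point, it would be verifying that the general theorem's hypotheses genuinely specialize without hidden constraints: in particular that setting $c_2 = 0$ does not degrade any denominator, that $\theta = 1 - \sqrt{1-\alpha} > 0$ stays bounded away from zero only insofar as $\alpha > 0$ is fixed, and that the constant $\mu$ from Lemma~\ref{merit-function-bound} — which controls the $\lambda\mu$ factor in the second bound — is the same one used throughout. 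Once these consistency checks pass, the proof is a two-line substitution plus the non-negativity splitting described above.
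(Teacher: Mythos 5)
Your proposal is correct and follows essentially the same route as the paper: Appendix B.4 likewise specializes Theorem~\ref{general-convergence-theorem} with $\sigma^2=0$, $\eta=1$, $c_1=1$, $c_2=0$, notes that $\tilde P^k=0$ so the Lyapunov function reduces to $\mathcal{L}^k=m(X^k)-m^\star+\frac{\gamma}{\theta}\tilde G^k$, and then splits the combined bound by dropping the non-negative term. Your additional consistency checks on the step-size prescription and on why $\tilde G^0$ retains an expectation are sound and consistent with the paper's treatment.
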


By additionally setting \(\alpha = 1\) and \(\tilde{G}^0 = 0\), meaning no compression is applied to the Landing method, the theorem simplifies to the standard Landing convergence theorem:

\begin{corollary}[Vanilla Landing in Deterministic Scenarios]
\label{cor-landing-deterministic}
Letting Assumptions \ref{global-gradient-bound-assumption}, \ref{local-smoothness-assumption}, \ref{lower-bound-assumption} and \ref{bounded-variance-assumption} hold, if compressor $\mathcal{C}$ satisfies Definition~\ref{compressor-definition}, $\sigma^2=0$, $\eta=1$ and $\alpha=1$, by running Algorithm \ref{EF-Landing-algorithm} for $K$ iterations, with
proper constant learning rate $\gamma$ (see Appendix~\ref{details-two-deterministic-appendix}), we have
\begin{align*}
    \frac{1}{K}\sum_{k=0}^{K-1}\|\mathrm{grad}f(X^k)\|_F^2\leq~& \frac{4(m(X^0)-m^*)}{K\gamma},\\
    \frac{1}{K}\sum_{k=0}^{K-1}\mathcal{N}(X^k)\leq~& \frac{2(m(X^0)-m^*)}{K\gamma\lambda\mu}.
\end{align*}

\end{corollary}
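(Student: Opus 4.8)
The plan is to obtain this corollary as a direct specialization of Theorem~\ref{thm-ef-landing-deterministic}, so the work is mostly bookkeeping rather than new analysis. First I would record the structural consequences of the hyperparameter choices $\sigma^2=0$, $\eta=1$, $\alpha=1$. Setting $\alpha=1$ in Definition~\ref{compressor-definition} forces $\Ebb_{\mathcal{C}}\|\mathcal{C}(X)-X\|_F^2\le 0$, hence $\mathcal{C}$ is (almost surely) the identity map; together with $\eta=1$ and $\sigma^2=0$ the local momentum recursion \eqref{local-update-form-1} collapses to $\vbo_i^k=\nabla f_i(X^k)$, the compression steps of Algorithm~\ref{EF-Landing-algorithm} are lossless, and the master state satisfies $\gbo^k=\nabla f(X^k)$ exactly. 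By Assumption~\ref{global-gradient-bound-assumption} we then have $\|\gbo^k\|_F\le L'$ on the safe region, so the clipping \eqref{clip-update} is inactive and $\tilde{\gbo}^k=\gbo^k=\nabla f(X^k)$. Consequently \texttt{EF-Landing} reduces \emph{exactly} to the vanilla Landing iteration \eqref{landing-iter}.

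Next I would plug $\alpha=1$ into the constants of Theorem~\ref{general-convergence-theorem} and Theorem~\ref{thm-ef-landing-deterministic}: $\theta:=1-\sqrt{1-\alpha}=1$ and $\beta:=\tfrac{1-\alpha}{1-\sqrt{1-\alpha}}=0$. The initialization prescribed in Algorithm~\ref{EF-Landing-algorithm}, namely $\gbo_i^0=\mathcal{C}(\vbo_i^0)=\vbo_i^0$, gives $\tilde{G}^0=\tfrac{1}{N}\sum_{i=1}^N\|\gbo_i^0-\vbo_i^0\|_F^2=0$, and since no randomness remains all expectations are vacuous. Substituting $\theta=1$ and $\tilde{G}^0=0$ into the two displayed inequalities of Theorem~\ref{thm-ef-landing-deterministic} deletes the $\tilde{G}^0$ terms and yields exactly
$$\frac{1}{K}\sum_{k=0}^{K-1}\|\mathrm{grad}f(X^k)\|_F^2\le \frac{4(m(X^0)-m^*)}{K\gamma},\qquad \frac{1}{K}\sum_{k=0}^{K-1}\mathcal{N}(X^k)\le \frac{2(m(X^0)-m^*)}{K\gamma\lambda\mu},$$
which is the claim (the feasibility bound also dominates the stated partial sum from $k=1$ since $\mathcal{N}\ge 0$). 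I would also note that the ``proper'' step size from Theorem~\ref{thm-ef-landing-deterministic} stays admissible here, because the upper bounds it imposes on $\gamma$ only tighten as $\alpha$ decreases (through $\theta$).

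For a self-contained derivation one can bypass the theorem entirely: Lemma~\ref{uniform-safe-step-size-lemma} keeps $X^k\in\mathrm{St}(p,n)^\epsilon$ for every $k$ once $\gamma\le\gamma_s$, so the $L_m$-smoothness of $m$ on the safe region gives the descent inequality $m(X^{k+1})\le m(X^k)-\gamma\langle\nabla m(X^k),\Lambda(X^k)\rangle+\tfrac{L_m\gamma^2}{2}\|\Lambda(X^k)\|_F^2$; then apply Lemma~\ref{merit-function-bound} with $\gbo=\nabla f(X^k)$ (so the $\|\gbo-\nabla f(X)\|_F^2$ penalty vanishes and $\mathrm{skew}(\gbo X\tranT)X=\mathrm{grad}f(X^k)$), bound $\|\Lambda(X^k)\|_F^2=\|\mathrm{grad}f(X^k)\|_F^2+\lambda^2\|\nabla\mathcal{N}(X^k)\|_F^2$ using the orthogonality \eqref{eq-orthogonality}, choose $\gamma$ small enough that the $O(\gamma^2)$ terms absorb half of the descent, and telescope over $k=0,\dots,K-1$ with $m(X^K)\ge m^*$. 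Either way there is essentially no genuine obstacle; the only points needing care are confirming that $\alpha=1\Rightarrow\mathcal{C}=\mathrm{id}$ (whence $\tilde{G}^0=0$ and the clip is inactive) and, along the self-contained route, tracking the constants $\tfrac14$, $\tfrac12$ and $\lambda\mu$ so that a single merit-function descent simultaneously produces both the stationarity bound and the feasibility bound.
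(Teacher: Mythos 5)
Your proposal is correct and follows essentially the same route as the paper: Appendix~\ref{details-two-deterministic-appendix} likewise obtains the corollary by specializing the general analysis with $c_1=1$, $c_2=0$ and noting that $\alpha=1$ forces $\tilde{G}^k=0$, so the Lyapunov function collapses to $m(X^k)-m^*$ and the compression terms drop out. Your observations that $\alpha=1$ makes $\mathcal{C}$ the identity, that the clipping is inactive, and your optional self-contained telescoping argument are all consistent with the paper's machinery and constants.
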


\begin{remark}
Our established rate for vanilla Landing without compression is \(\mathcal{O}(1/K)\), matching the state-of-the-art result in \cite{ablin_infeasible_2024, vary_optimization_2024} with exact constants, confirming the sharpness of our analysis. Moreover, by comparing Theorem~\ref{thm-ef-landing-deterministic} and Corollary~\ref{cor-landing-deterministic}, we observe that Landing, with or without communication compression, achieves the same \(\mathcal{O}(1/K)\) rate, indicating that compression does not degrade the convergence order. However, more aggressive compression (i.e., \(\theta \to 0\)) leads to slower convergence for EF-Landing.
\end{remark}

\subsection{Convergence Rate in Stochastic Scenario}
In this section, we consider the case where the variance \(\sigma^2 > 0\) (Assumption \ref{bounded-variance-assumption}). Notably, our assumption differs from that of \cite{ablin_infeasible_2024}, which directly assumes a bounded variance for \(\tilde{\Lambda}(X^k;\tilde{\gbo}^k)\). The momentum technique plays a crucial role in stochastic settings. 

We consider the case with communication compression, i.e., \(\alpha < 1\), and analyze the most general version of the EF-Landing algorithm. In this setting, the constants in Theorem \ref{general-convergence-theorem} are specified as \( c_1 = c_2 = 2 \). The following theorem presents the convergence result of EF-Landing in stochastic scenarios, demonstrating a linear speedup.
\begin{theorem}[EF-Landing in Stochastic Scenarios]
\label{most-general-ef-landing-theorem}
Letting Assumptions \ref{global-gradient-bound-assumption}, \ref{local-smoothness-assumption}, \ref{lower-bound-assumption} and \ref{bounded-variance-assumption} hold, if compressor $\mathcal{C}$ satisfies Definition~\ref{compressor-definition}, and we choose step size $\gamma$ as in Lemma \ref{adequate-step-size-lemma}, by running Algorithm \ref{EF-Landing-algorithm} for $K$ iterations, with proper learning rate $\gamma$ and momentum coefficient $\eta$ (see Appendix \ref{most-general-ef-landing-proof-appendix}), we have
\begin{align*}
    \frac{1}{K}\sum_{k=0}^{K-1}\Ebb[\|\mathrm{grad}f(X^k)\|_F^2]\leq~& 4 \mathcal{O}\Big(\frac{1}{\sqrt{NK}}\Big),\\
    \frac{1}{K}\sum_{k=0}^{K-1}\Ebb[\mathcal{N}(X^k)]\leq~& \frac{2}{\lambda\mu} \mathcal{O}\Big(\frac{1}{\sqrt{NK}}\Big),
\end{align*}
where the $\mathcal{O}(\frac{1}{\sqrt{NK}})$ term is specified as
\begin{align*}
    &\mathcal{O}\Big(\frac{1}{\sqrt{NK}}\Big)=\frac{C^\gamma_1 \mathcal{L}^0}{K}+4\Big(\frac{2\sigma^2 C^\gamma_2 \mathcal{L}^0}{NK}\Big)^{\frac{1}{2}}\\
    +~&4\Big(\frac{2\sigma^2(1-\alpha)}{\theta}\Big)^{\frac{1}{3}}\Big(\frac{C^\gamma_2\mathcal{L}^0}{K}\Big)^{\frac{2}{3}}+4\Big(\frac{4\beta\sigma^2}{\theta}\Big)^{\frac{1}{4}}\Big(\frac{C^\gamma_2\mathcal{L}^0}{K}\Big)^{\frac{3}{4}},
\end{align*} 
The constant \(\mathcal{L}^0\) is defined in \eqref{Lyapunov-function}, while \(\theta\) and \(\beta\) are specified in Theorem \ref{general-convergence-theorem}. The constants \(C^\gamma_1\) and \(C^\gamma_2\) are independent of \(N\) and \(K\), with their definitions provided in Appendix \ref{most-general-ef-landing-proof-appendix}.
\end{theorem}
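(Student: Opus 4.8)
\textbf{Proof proposal for Theorem~\ref{most-general-ef-landing-theorem}.}

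The plan is to specialize the Main Convergence Theorem~\ref{general-convergence-theorem} to the case $\alpha < 1$, $\sigma^2 > 0$, $c_1 = c_2 = 2$, and then optimize over the free parameters $\gamma$ and $\eta$. Starting from the master bound in Theorem~\ref{general-convergence-theorem}, the right-hand side has the form $\frac{\mathcal{L}^0}{\gamma K} + \frac{2\eta^2(1-\alpha)\sigma^2}{\theta} + \frac{4\eta^3\beta\sigma^2}{\theta} + \frac{2\eta\sigma^2}{N}$. The first step is to make the $\gamma$-dependence explicit: from Lemma~\ref{adequate-step-size-lemma} (and the safe-step-size constraint in Lemma~\ref{uniform-safe-step-size-lemma}) the admissible step size has the form $\gamma = \min\{\gamma_s,\ \text{const}\cdot\eta,\ \dots\}$, so that $1/(\gamma K)$ contributes a term $C_1^\gamma \mathcal{L}^0/K$ plus a term proportional to $C_2^\gamma \mathcal{L}^0/(\eta K)$ coming from the $\eta$-scaled part of $\gamma$. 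Collecting, the bound becomes, up to the universal constants $C_1^\gamma, C_2^\gamma$,
\begin{align*}
\text{RHS} \;\le\; \frac{C_1^\gamma \mathcal{L}^0}{K} \;+\; \frac{C_2^\gamma \mathcal{L}^0}{\eta K} \;+\; \frac{2\eta\sigma^2}{N} \;+\; \frac{2\eta^2(1-\alpha)\sigma^2}{\theta} \;+\; \frac{4\eta^3\beta\sigma^2}{\theta}.
\end{align*}

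The second step is to tune $\eta$ to balance the decaying term $C_2^\gamma\mathcal{L}^0/(\eta K)$ against each of the three increasing-in-$\eta$ noise terms separately, using the elementary fact that $\min_{\eta>0}(a/\eta + b\eta^q) = \Theta(a^{q/(q+1)} b^{1/(q+1)})$. Matching $C_2^\gamma\mathcal{L}^0/(\eta K)$ with $2\eta\sigma^2/N$ gives the dominant $\big(\sigma^2 C_2^\gamma\mathcal{L}^0/(NK)\big)^{1/2}$ term and fixes $\eta \sim \big(NC_2^\gamma\mathcal{L}^0/(\sigma^2 K)\big)^{1/2}$; matching with the $\eta^2$ term gives the $\big((1-\alpha)\sigma^2/\theta\big)^{1/3}(C_2^\gamma\mathcal{L}^0/K)^{2/3}$ term; matching with the $\eta^3$ term gives the $\big(\beta\sigma^2/\theta\big)^{1/4}(C_2^\gamma\mathcal{L}^0/K)^{3/4}$ term. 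Taking $\eta$ to be the minimum of the three candidate values (capped at $1$) makes all four terms simultaneously controlled by their sum, which is exactly the displayed $\mathcal{O}(1/\sqrt{NK})$ expression; as $K\to\infty$ the $1/\sqrt{NK}$ term dominates, yielding the claimed asymptotic linear speedup. Finally, dividing the resulting bound on $\frac{1}{4K}\sum \Ebb[\|\mathrm{grad}f(X^k)\|_F^2] + \frac{\lambda\mu}{2K}\sum\Ebb[\mathcal{N}(X^k)]$ by the appropriate coefficient and dropping the nonnegative complementary term gives the two stated inequalities (the factor $4$ and the factor $2/(\lambda\mu)$ come directly from the $\tfrac14$ and $\tfrac{\lambda\mu}{2}$ weights).

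I would organize the writing as: (i) invoke Theorem~\ref{general-convergence-theorem} with $c_1=c_2=2$; (ii) substitute the explicit form of $\gamma$ from Lemma~\ref{adequate-step-size-lemma} to produce the $C_1^\gamma, C_2^\gamma$ constants and isolate the $1/\eta$ term; (iii) perform the three-way balancing of $\eta$ via the $a/\eta + b\eta^q$ lemma; (iv) verify the cap $\eta\le 1$ is consistent for $K$ large and assemble the final bound. The main obstacle is step (ii): one must carefully track how the constraints defining the safe step size ($\gamma_s$ depending on $\lambda, \epsilon, L'$) and the momentum-compatibility constraint interact, so that the constants $C_1^\gamma$ and $C_2^\gamma$ genuinely come out independent of $N$ and $K$ — in particular ensuring the $\eta$-dependent part of the step-size cap does not secretly reintroduce an $N$ or $K$ dependence, and checking that the clipping operation \eqref{clip-update} together with Lemma~\ref{uniform-safe-step-size-lemma} keeps all iterates in $\mathrm{St}(p,n)^\epsilon$ so that Assumptions~\ref{global-gradient-bound-assumption}--\ref{bounded-variance-assumption} and Lemma~\ref{merit-function-bound} remain applicable throughout. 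The $\eta$-optimization itself (step (iii)) is routine once the bound is in the stated four-term form.
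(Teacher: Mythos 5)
Your proposal is correct and follows essentially the same route as the paper's proof in Appendix~\ref{most-general-ef-landing-proof-appendix}: invoke Theorem~\ref{general-convergence-theorem} with $c_1=c_2=2$, use Lemma~\ref{adequate-step-size-lemma} to write $1/\gamma \le C_1^\gamma + C_2^\gamma/\eta$ with $N,K$-independent constants, choose $\eta$ as the minimum of the three candidate values that balance $C_2^\gamma\mathcal{L}^0/(\eta K)$ against the $\eta$, $\eta^2$, and $\eta^3$ noise terms, and then divide by the $\tfrac14$ and $\tfrac{\lambda\mu}{2}$ weights. The concerns you flag in step (ii) are exactly resolved by the explicit expressions for $C_1^\gamma$ and $C_2^\gamma$ already recorded in that lemma, so no new work is needed there.
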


\begin{remark}
This theorem establishes that EF-Landing achieves an asymptotic linear speedup convergence rate. The impact of the communication compressor appears only in higher-order terms and does not affect the dominant linear speedup rate. Notably, this rate aligns with the momentum error feedback method \cite{fatkhullin_momentum_2023}, which is designed for unconstrained minimization problems. This suggests that, with careful algorithmic design, the Stiefel manifold constraint does not pose fundamental challenges to communication compression.
\end{remark}

To illustrate the versatility of our convergence analysis, we now examine the stochastic Landing algorithm without compression. In this case, the constants in Theorem \ref{general-convergence-theorem} are specified as \( c_1 = 0 \) and \( c_2 = 1 \), leading to the result:

\begin{theorem}[Stochastic Landing Convergence]
\label{stochastic-landing-theorem}
    Letting Assumptions, \ref{global-gradient-bound-assumption}, \ref{local-smoothness-assumption}, \ref{lower-bound-assumption} and \ref{bounded-variance-assumption} hold, if compressor $\mathcal{C}$ satisfies Definition~\ref{compressor-definition}, $\alpha=1$, with proper learning rate $\gamma$ and momentum coefficient $\eta$ (see Appendix \ref{stochastic-landing-proof-appendix}), by running Algorithm \ref{EF-Landing-algorithm} for $K$ iterations, we have
    \begin{align*}
        \frac{1}{K}\sum_{k=0}^{K-1}\Ebb[\|\mathrm{grad}f(X^k)\|_F^2]\leq~& \frac{4C^\gamma_1\mathcal{L}^0}{K}+4\sigma\sqrt{\frac{C_2^\gamma \mathcal{L}^0}{NK}},\\
        \frac{1}{K}\sum_{k=0}^{K-1}\mathcal{N}(X^k)\leq~& \frac{2C^\gamma_1\mathcal{L}^0}{K\lambda\mu}+\frac{2\sigma}{\lambda\mu}\sqrt{\frac{C^\gamma_2\mathcal{L}^0}{NK}},
    \end{align*}
    where $\mathcal{L}^0$ is defined in \eqref{Lyapunov-function}, $\theta$ and $\beta$ are defined in Theorem \ref{general-convergence-theorem}, $C^\gamma_1, C^\gamma_2$ are two constants defined in Appendix \ref{stochastic-landing-proof-appendix}.
\end{theorem}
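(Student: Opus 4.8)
The plan is to specialize the master inequality from Theorem~\ref{general-convergence-theorem} to the regime $\alpha = 1$, $c_1 = 0$, $c_2 = 1$, and then optimize over the step size $\gamma$ and momentum coefficient $\eta$ to extract the $\mathcal{O}(1/\sqrt{NK})$ rate. When $\alpha = 1$ the compressor is the identity, so $\cbo_i^k = \vbo_i^{k+1} - \gbo_i^k$ and thus $\gbo_i^{k+1} = \vbo_i^{k+1}$; consequently $\tilde G^k \equiv 0$ and $\gbo^k = \vbo^k$, meaning the only remaining error term is the momentum/variance term $P^k = \|\vbo^k - \nabla f(X^k)\|_F^2$. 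Also $\theta = 1 - \sqrt{1-\alpha} = 1$ and $\beta = 0$, so the Lyapunov function collapses to $\mathcal{L}^k = m(X^k) - m^\star + c_2\gamma\eta^{-1}P^k$, and the right-hand side of Theorem~\ref{general-convergence-theorem} becomes $\mathcal{L}^0/(\gamma K) + \eta\sigma^2/N$ (the $c_1$ terms vanish). First I would write down this simplified bound explicitly:
\[
\frac{1}{4K}\sum_{k=0}^{K-1}\Ebb[\|\mathrm{grad}f(X^k)\|_F^2] + \frac{\lambda\mu}{2K}\sum_{k=0}^{K-1}\Ebb[\mathcal{N}(X^k)] \le \frac{\mathcal{L}^0}{\gamma K} + \frac{\eta\sigma^2}{N}.
\]

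The second step is the step-size/momentum tuning. I would parametrize $\gamma$ as a constant (with $\gamma \le \gamma_s$ from Lemma~\ref{uniform-safe-step-size-lemma} and whatever additional smallness is demanded in the proof of Theorem~\ref{general-convergence-theorem}, encoded through the constants $C_1^\gamma, C_2^\gamma$ that absorb $L_m$, $L'$, $\lambda$, $\mu$, etc.), and then choose $\eta$ to balance the two terms on the right-hand side. Treating $\mathcal{L}^0$ as essentially $m(X^0) - m^\star$ plus a lower-order $P^0$ contribution, the bound has the shape $A/(\gamma\eta K) \cdot (\text{const}) + B\eta/N$ once one tracks how $\mathcal{L}^0$ carries a $\gamma/\eta$ factor in its $P^0$ piece; more cleanly, after folding constants one gets something like $C_1^\gamma \mathcal{L}^0/K + \eta\sigma^2/N$ with a residual $1/\eta$ hidden in a $\gamma$-dependent constant. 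Choosing $\eta \asymp \sqrt{N C_2^\gamma \mathcal{L}^0/(\sigma^2 K)}$ (clipped at $1$) balances the variance term against the Lyapunov-decay term and yields the claimed $\frac{4C_1^\gamma\mathcal{L}^0}{K} + 4\sigma\sqrt{\frac{C_2^\gamma\mathcal{L}^0}{NK}}$; dividing through by $1/4$ and by $\lambda\mu/2$ respectively gives the two displayed inequalities for $\|\mathrm{grad}f(X^k)\|_F^2$ and $\mathcal{N}(X^k)$.

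Concretely the order of steps is: (i) verify $\alpha=1 \Rightarrow \tilde G^k = 0$, $\theta = 1$, $\beta = 0$, and that the hypotheses of Theorem~\ref{general-convergence-theorem} are met so its conclusion applies with $c_1=0, c_2=1$; (ii) substitute these values to collapse the Lyapunov function and the error terms; (iii) record the explicit dependence of the admissible $\gamma$ and of $\mathcal{L}^0$ on the problem constants, defining $C_1^\gamma, C_2^\gamma$ accordingly (this is essentially bookkeeping from Appendix~\ref{most-general-ef-landing-proof-appendix}, reused here); (iv) optimize $\eta$ by the standard square-root balancing, taking care of the $\eta \le 1$ constraint which only adds a benign $\mathcal{O}(1/K)$ term already present; (v) split the resulting single inequality into the gradient-norm and feasibility statements using the fact that both $\frac14\|\mathrm{grad}f\|_F^2$ and $\frac{\lambda\mu}{2}\mathcal{N}$ are individually upper-bounded by the full right-hand side since each is nonnegative.

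The main obstacle I anticipate is step (iii)–(iv): making sure the constants $C_1^\gamma, C_2^\gamma$ are genuinely independent of $N$ and $K$ while the step size $\gamma$ itself may need to depend on $K$ (or be taken as a sufficiently small constant) to satisfy the safety and descent conditions simultaneously, and tracking how the initialization term $P^0 = \|\vbo^0 - \nabla f(X^0)\|_F^2$ (which is $\Ebb[P^0] \le \sigma^2/N$ under Assumption~\ref{bounded-variance-assumption} since $\vbo^0$ averages $N$ independent samples) feeds into $\mathcal{L}^0$ without breaking the linear-speedup scaling. Everything else is a direct specialization of the already-proven general theorem plus the textbook $\gamma,\eta$-tuning argument familiar from momentum error-feedback analyses such as \cite{fatkhullin_momentum_2023}.
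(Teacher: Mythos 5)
Your proposal is correct and follows essentially the same route as the paper's proof in Appendix~\ref{stochastic-landing-proof-appendix}: specialize Theorem~\ref{general-convergence-theorem} with $c_1=0$, $c_2=1$ so the Lyapunov function collapses to $m(X^k)-m^\star+\frac{\gamma}{\eta}P^k$, bound $1/\gamma$ via Lemma~\ref{adequate-step-size-lemma} as an $\eta$-independent part $C_1^\gamma$ plus a $C_2^\gamma/\eta$ part, and balance $\frac{C_2^\gamma\mathcal{L}^0}{\eta K}$ against $\frac{\eta\sigma^2}{N}$ by choosing $\eta=\sqrt{C_2^\gamma N\mathcal{L}^0/(\sigma^2 K)}$ before splitting the combined bound into the two claimed inequalities by nonnegativity. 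You correctly anticipate the one genuinely delicate point (the $1/\eta$ dependence hidden inside the admissible $\gamma$, and the benign $\frac{\gamma}{\eta}P^0$ contribution to $\mathcal{L}^0$), which the paper handles in exactly the way you describe.
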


\begin{remark}
Our analysis demonstrates that the vanilla Landing method achieves a convergence rate of \(\mathcal{O}(1/\sqrt{NK})\) in stochastic scenarios without compression. Furthermore, by comparing Theorem~\ref{most-general-ef-landing-theorem} and Theorem~\ref{stochastic-landing-theorem}, we note that the Landing method attains the same \(\mathcal{O}(1/\sqrt{NK})\) rate regardless of whether communication compression is applied. This indicates that the use of compression does not adversely affect the convergence order.
\end{remark}

\section{Optimization on Block-wise  Manifolds}
Practical problems typically require variables to satisfy orthogonal constraints in a block-wise manner, while the remaining variables remain unconstrained, e.g., orthogonal CNN with orthogonally regularized convolutional layers and unconstrained MLP layers \cite{wang_orthogonal_2020}. The optimization problem can thus be formulated as follows:
\begin{equation}
\begin{aligned}
    \min_{X_1,\dots,X_J;x}\quad &f(X_1,\dots,X_J;x)\\
    \mathrm{s.\,t.}\quad\quad &X_j\tranT X_j =I_{p_j},\quad j=1,\dots,J,
\end{aligned}
\label{blockwise-problem}
\end{equation}
where \(X_j \in \mathbb{R}^{n_j \times p_j}\) represents a constrained variable block on \(\mathrm{St}(p_j, n_j)\), and \(x \in \mathbb{R}^{n_0}\) contains the remaining unconstrained variables. We define \(\mathfrak{X} := (X_1, \dots, X_J; x)\), a composite data type, whose domain is given by \(\mathfrak{R} := (\mathbb{R}^{n_1 \times p_1}, \dots, \mathbb{R}^{n_J \times p_J}; \mathbb{R}^{n_0})\). Consequently, \(f\) is well-defined as a mapping from \(\mathfrak{R}\) to \(\mathbb{R}\), with its gradient as 
\begin{equation*}
    \nabla f(\mathfrak{X}):=\Big(\frac{\partial f}{\partial X_1},\dots, \frac{\partial f}{\partial X_J};\frac{\partial f}{\partial x}\Big)\in\mathfrak{R}.
\end{equation*}
The inner product on $\mathfrak{R}$ is computed as 
\begin{equation*}
    \langle \mathfrak{X},\mathfrak{Y}\rangle :=\sum_{j=1}^J \langle X_j, Y_j\rangle +\langle x,y\rangle,\quad \mathfrak{X},\mathfrak{Y}\in\mathfrak{R}.
\end{equation*}
The norm on \(\mathfrak{R}\) is defined as \(\|\mathfrak{X}\| := \sqrt{\langle \mathfrak{X}, \mathfrak{X} \rangle}\), allowing Definition~\ref{compressor-definition}, Assumptions \ref{global-gradient-bound-assumption}, \ref{local-smoothness-assumption}, \ref{lower-bound-assumption}, and \ref{bounded-variance-assumption} to be properly formulated for problem \eqref{blockwise-problem}.

By applying the Landing descent direction to each orthogonally constrained block and vanilla gradient descent to the free variable block, the descent direction of the block-wise constrained problem \eqref{blockwise-problem} can be expressed as:
\begin{align}
\label{eq-genearlized-descent}
\tilde{\Lambda}(\mathfrak{X};\mathfrak{g}):=\Big(& \mathrm{skew}(\boldsymbol{g}_1 X_1\tranT)X_1 +\lambda X_1(X_1\tranT X_1 -I_{p_1}), \nonumber \\
&\vdots \nonumber \\
&\mathrm{skew}(\boldsymbol{g}_J X_J\tranT)X_J +\lambda X_J(X_J\tranT X_J -I_{p_J}); \nonumber \\
&\boldsymbol{g}_0\Big)\in\mathfrak{R},
\end{align}
where $\boldsymbol{g}_1, \dots, \boldsymbol{g}_J, \boldsymbol{g}_0$ can be $\partial f/\partial X_1, \dots, \partial f/\partial X_J$,  $\partial f/\partial x$ or their compressed approximations, as indicated by \eqref{local-update-form}--\eqref{master-e-gradient-update}. Using \eqref{eq-genearlized-descent}, the main update step follows:  
\begin{equation}  
    \mathfrak{X}^{k+1}=\mathfrak{X}^k-\gamma\tilde{\Lambda}(\mathfrak{X};\mathfrak{g}).  
\end{equation}
Therefore, it is straightforward to extend the EF-Landing Algorithm~\ref{EF-Landing-algorithm} to solve the optimization problem \eqref{blockwise-problem} on the block-wise Stiefel manifolds. The implementation details are provided in Appendix~\ref{blockwise-algorithm-appendix}. Moreover, Appendix \ref{blockwise-appendix} presents the convergence results of EF-Landing in stochastic scenario with block-wise constraints. 

\section{Numerical Experiments}

\begin{figure*}
    \centering
    \includegraphics[width=0.85\linewidth]{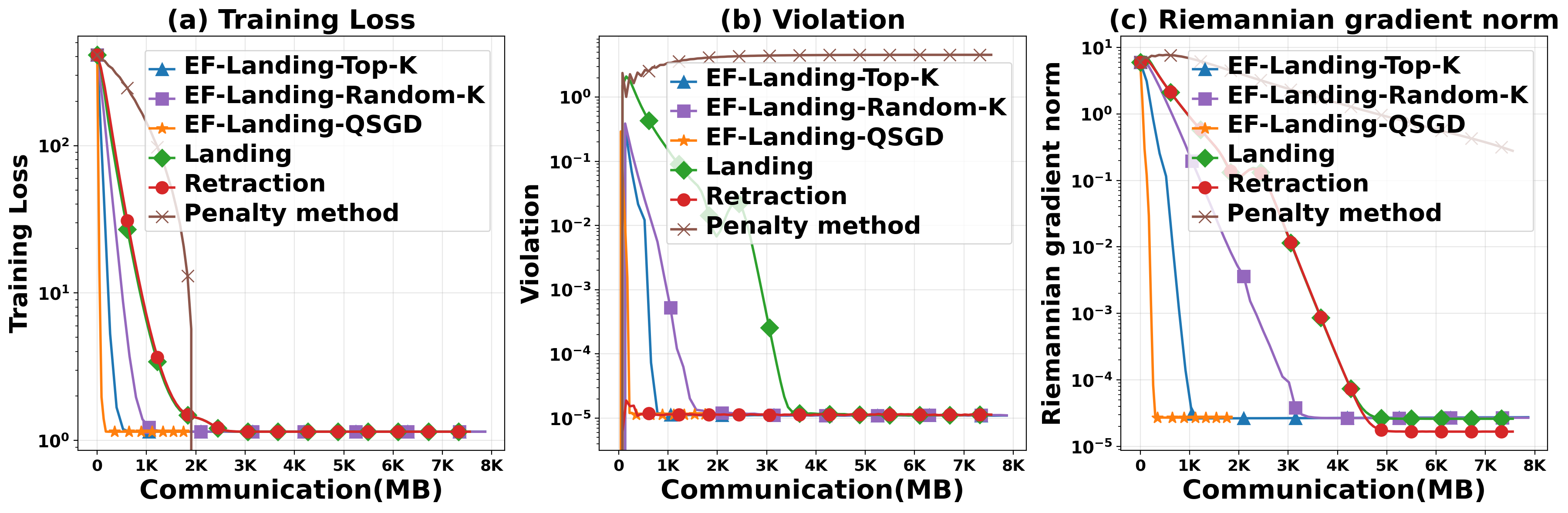}
    \caption{Performance comparison of EF-Landing and other algorithms on distributed online PCA with \(p = 1000\).}
    \label{fig:p=1000}
\end{figure*}

\begin{figure*}
    \centering
    \includegraphics[width=1.00\linewidth]{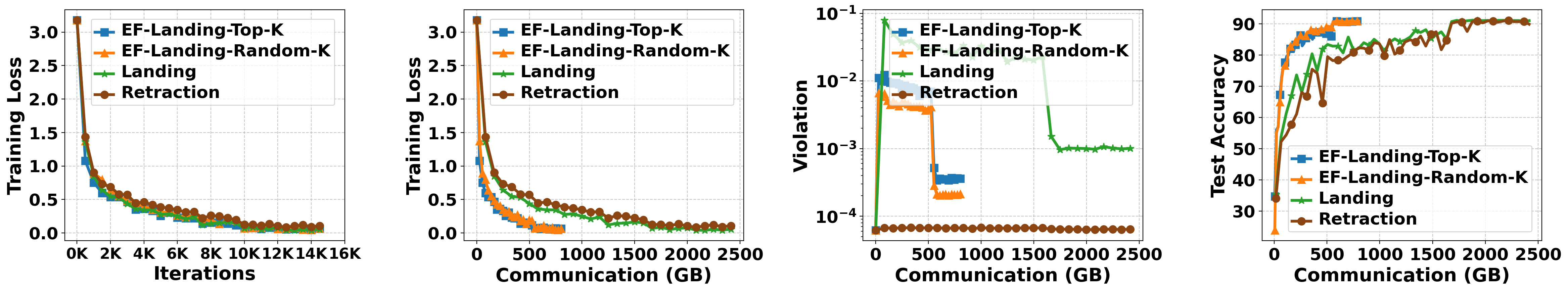}
    \caption{Performance comparison on deep learning with ResNet-18 on CIFAR-10.}
    \label{fig:cifar-resnet18}
\end{figure*}

To validate the performance of EF-Landing, we provide experiments on two groups of problems: the distributed online PCA for deterministic scenario and deep learning using ResNet-$18$ \citep{he2016deep} neural network architecture with orthogonal constraints applied to the convolutional layers for stochastic scenario. We compared EF-Landing with other algorithms for optimization on the Stiefel manifold, including vanilla Landing, QR retraction and the Euclidean gradient descent with added $\ell_2$ squared penalty norm. In addition, to ensure the sufficiency and rigor of the experiments, we compared our approach with several other distributed algorithms on manifolds, including decentralized training frameworks \cite{chen_decentralized_2021} and centralized frameworks with multiple local updates before aggregation \cite{zhang_nonconvex_2024}. In all experiments, the gradient clipping bound \(L'\) was set to a sufficiently large value, as overestimating it does not incur any loss. All the experiments were implemented in PyTorch and performed using a single GPU. Further experiments can be found in Appendix~\ref{further-experiments-appendix}. 

\subsection{Distributed Online PCA}
The distributed online PCA problem can be expressed as
\begin{align*}
\min_{X \in \mathrm{St}(p, n)}\quad f(X):= -\frac{1}{2N}\sum_{i=1}^N \|A_i X\|_F^2, 
\end{align*}
where \( A_i \in \mathbb{R}^{l \times n} \) is the synthetically generated local data matrix for node $i$, with $l$ being the number of samples for each node. We set $l=5000$, $n=5000$, $N=4$ and $p=1000$. For EF-Landing algorthm, the penalty parameter \(\lambda\) was set to \(1\), and we used three compressors: Top-$K$, Rand-$K$ with compression retention ratio $0.1$  and QSGD  with quantization level $s=16$. Using different algorithms mentioned above, we get the results as in Figure~\ref{fig:p=1000}. 

In the experiments, the training loss was computed by \( f(X_k) - f(X_*) \), where \( X_* \) is the matrix composed by \( p \) right singular vectors of 
\(A = 
\begin{pmatrix}
A_1\tranT,\cdots, 
A_N\tranT
\end{pmatrix}\tranT
\), and the violation of constraint was computed by \(\mathcal{N}(X)=\frac{1}{4}\|X\tranT X-I_p\|^2\). The result shows that EF-Landing reaches the same training loss as the vanilla Landing algorithm and other algorithms. Furthermore, due to communication compression technique, EF-Landing significantly reduces the communication overhead. For penalty method, the penalty parameter was set as \(\lambda = 8\). We observe that the penalty method performs poorly on this problem, which could not enforce the orthogonal constraint during the optimization process (its negative part of training loss was omitted). We provide further discussion on the choice of penalty parameters in Appendix~\ref{penalty method}.

\subsection{Neural Network with Orthogonal Constraints}
Another group of experiments were conducted on Resnet18. We apply orthogonal constraints to the convolutional layers by reshaping the convolutional kernels to size \( n_{\text{out}} \times n_{\text{in}} {n_x}n_y \), where \( n_{\text{in}} \) and \( n_{\text{out}} \) are the numbers of input and output channels, and \( n_x \), \( n_y \) are the filter dimensions \cite{ablin_infeasible_2024}. In case that the reshaped matrix becomes wide instead of tall, we enforce orthogonality on its transpose. Problem formulation \eqref{blockwise-problem} is suitable for our settings.

We tested the performance of EF-Landing on the CIFAR-10 dataset, using Top-\(K\) and Random-\(K\) compressors with a compression retention ratio of $0.2$. We compared the results of EF-Landing with vanilla Landing and QR retraction methods in Figure~\ref{fig:cifar-resnet18}. For each algorithm, the network is trained for 150 epochs, and the learning rate is reduced to \( 0.1 \) of its original value after the 100th epoch.

As shown in Figure~\ref{fig:cifar-resnet18}, EF-Landing and other algorithms reach similar accuracy in terms of iterations, while EF-Landing significantly reduces the communication overhead. Additional experimental details, including the choice of hyper-parameters such as momentum and step size, can be found in Appendix~\ref{further-experiments-appendix}. 

\section{Conclusion}
In this paper, we propose EF-Landing algorithm for solving distributed optimization problems on Stiefel manifolds, which is both computationally and communicationally efficient. EF-Landing encompasses various scenarios with sharp convergence guarantees and linear speedup rate. The good compatibility with block-wise problems extends the practicality of EF-Landing even further. Extensive numerical experiments validate our theoretical results.

\section*{Acknowledgements}
This work was supported by the National Natural Science Foundation of China under Grants 92370121, 12301392, and W2441021, and by the National Key Research and Development Program of China under Grant 2024YFA1012902. We also thank the anonymous reviewers for their valuable feedback.

\section*{Impact Statement}
This paper centers on the theoretical analysis of machine learning algorithm convergence. We do not foresee any significant societal consequences arising from our work, none of which we believe need to be explicitly emphasized.

\bibliography{bibliography}
\bibliographystyle{icml2025}

\newpage
\onecolumn

\appendix

\section*{Contents of the Appendices}
\begin{description}
    \contentitem{A. Details of Realizing EF-Landing}{13}
    \begin{description}
         \contentitem{A.1. Proof of Proposition \ref{orthogonal-proposition}}{13}
         \contentitem{A.2. Example Showing the Necessity of Error Feedback}{14}
         \contentitem{A.3. Unconstrained Deterministic Optimization with Vanilla Gradient Compression}{14}
    \end{description}
    \contentitem{B. Details of Convergence Analysis}{15}
    \begin{description}
        \contentitem{B.1. Proof of Lemma \ref{uniform-safe-step-size-lemma}}{15}
        \contentitem{B.2. Proof of Lemma \ref{merit-function-bound}}{16}
        \contentitem{B.3. Proof of Theorem \ref{general-convergence-theorem}}{18}
        \contentitem{B.4. Details of Theorem~\ref{thm-ef-landing-deterministic} and Corollary~\ref{cor-landing-deterministic}}{23}
        \contentitem{B.5. An adequate Selection of Step Size with momentum}{23}
        \contentitem{B.6. Proof of Theorem \ref{most-general-ef-landing-theorem}}{24}
        \contentitem{B.7. Proof of Theorem \ref{stochastic-landing-theorem}}{25}
    \end{description}
    \contentitem{C. Details of Optimization on Block-wise Manifolds}{25}
    \begin{description}
        \contentitem{C.1. Algorithm for Block-wise Problems}{25}
        \contentitem{C.2. Convergence Results of EF-Landing in Stochastic Scenarios for Block-wise Constraints}{26}
    \end{description}
    \contentitem{D. Further Numerical Experiments}{28}
    \begin{description}
        \contentitem{D.1. Online PCA}{28}
        \begin{description}
            \contentitem{D.1.1. Datasets}{28}
            \contentitem{D.1.2. Hyperparameters}{28}
            \contentitem{D.1.3. Experiment Results}{29}
        \end{description}
        \contentitem{D.2. Neural Networks with Orthogonality Constraints}{30}
        \begin{description}
            \contentitem{D.2.1. Datasets}{30}
            \contentitem{D.2.2. VGG16 on MNIST}{30}
            \contentitem{D.2.3. VGG16 and ResNet-18 on CIFAR-10}{32}
    
        \end{description}
        \contentitem{D.3. Comparison with the Penalty method}{33}
        \begin{description}
            \contentitem{D.3.1. PCA problem}{33} 
            \contentitem{D.3.2. Neural Network}{35}
        \end{description}
        \contentitem{D.4. Comparison with Decentralized Setting}{37}
        \contentitem{D.5. Comparison with Centralized Multi-Local-Update Setting}{37}
        
    \end{description}

\end{description}

\section{Details of Realizing EF-Landing}
\subsection{Proof of Proposition \ref{orthogonal-proposition}}\label{proof-orthogonal-prop}

\begin{proof}
For any \(\gbo\in\RR{n\times p}\), we have 
\begin{align*}
    &\langle \mathrm{skew}(\gbo X\tranT)X,\lambda X(X\tranT X-I_p)\rangle =\mathrm{Tr}\big(X\tranT \mathrm{skew}(\gbo X\tranT)\tranT\cdot \lambda X(X\tranT X-I_p)\big)\\
    =~& \mathrm{Tr}\big(\mathrm{skew}(\gbo X\tranT)\tranT\cdot \lambda X(X\tranT X-I_p)X\tranT\big)=\langle \mathrm{skew}(\gbo X\tranT),\lambda X(X\tranT X-I_p)X\tranT\rangle, 
\end{align*}
where \(\mathrm{skew}(\gbo X\tranT)\) is a skew-symmetric matrix and \(\lambda X(X\tranT X-I_p)X\tranT\) is a symmetric one. For any skew-symmetric matrix $X$ and symmetric matrix $Y$,
\begin{align*}
    \langle X,Y\rangle =\langle X\tranT,Y\tranT\rangle=-\langle X,Y\rangle =0,
\end{align*}
which leads to the conclusion.
\end{proof}

\subsection{Example Showing the Necessity of Error Feedback}\label{toy-example}
Here is a toy example illustrating the phenomenon of stagnation. Let $X\in\RR{2\times 1}$ and the Stiefel manifold $\{X\in\RR{2\times 1} \mid X\tranT X=1\}$ is now a unit circle. We only consider distributed learning on a single node with gradient $\nabla f(X)=(2,1)\tranT$. Top-$1$ compressor will compress the gradient into $\mathcal{C}(\nabla f(X))=(2,0)\tranT$. If the iteration point $X$ happens to be $(1,0)\tranT$, the Riemannian gradient after compression $\mathrm{grad}(\boldsymbol{g})=\mathrm{skew}(\gbo X\tranT)X$ will be $\mathbf{0}$ since $\mathcal{C}(\nabla f(X))=(2,0)\tranT$ happens to be orthogonal to the tangent space of Stiefel manifold at point $X=(1,0)\tranT$, i.e., $\mathrm{span}\{(0,1)\tranT\}$. The iteration point $X$ is on $\mathrm{St}(1,2)$, so the gradient of penalty term $\lambda \mathcal{N}(X)$ is also $\mathbf{0}$. Therefore, the descent direction $\tilde{\Lambda}(X;\mathcal{C}(\nabla f(X)))$ is $\mathbf{0}$ but the iteration does not reach a stationary point. Furthermore, greedy compressor Top-$1$ will permanently output the same compressed gradient $(2,0)\tranT$ as long as the iteration point does not move. This causes the iteration to stagnate.

However, when using error feedback, for the same situation, we first make a stagnant step to $X'=X=(1,0)\tranT$. But since the error feedback technique compresses the difference of two consecutive gradients, the next gradient after compression will be $\mathcal{C}(\nabla f(X))+\mathcal{C}(\nabla f(X')-\mathcal{C}(\nabla f(X)))=(2,0)\tranT+(0,1)\tranT =(2,1)\tranT$, which is no longer orthogonal to the tangent space of $X'$ on the Stiefel manifold because the direction parallel with the tangent space comes into effect. Hence, the iteration escapes the stagnation point. Figure~\ref{fig:ef-necessity-app} illustrates the process.

\begin{figure}[H]
    \centering
    \includegraphics[width=0.5\linewidth]{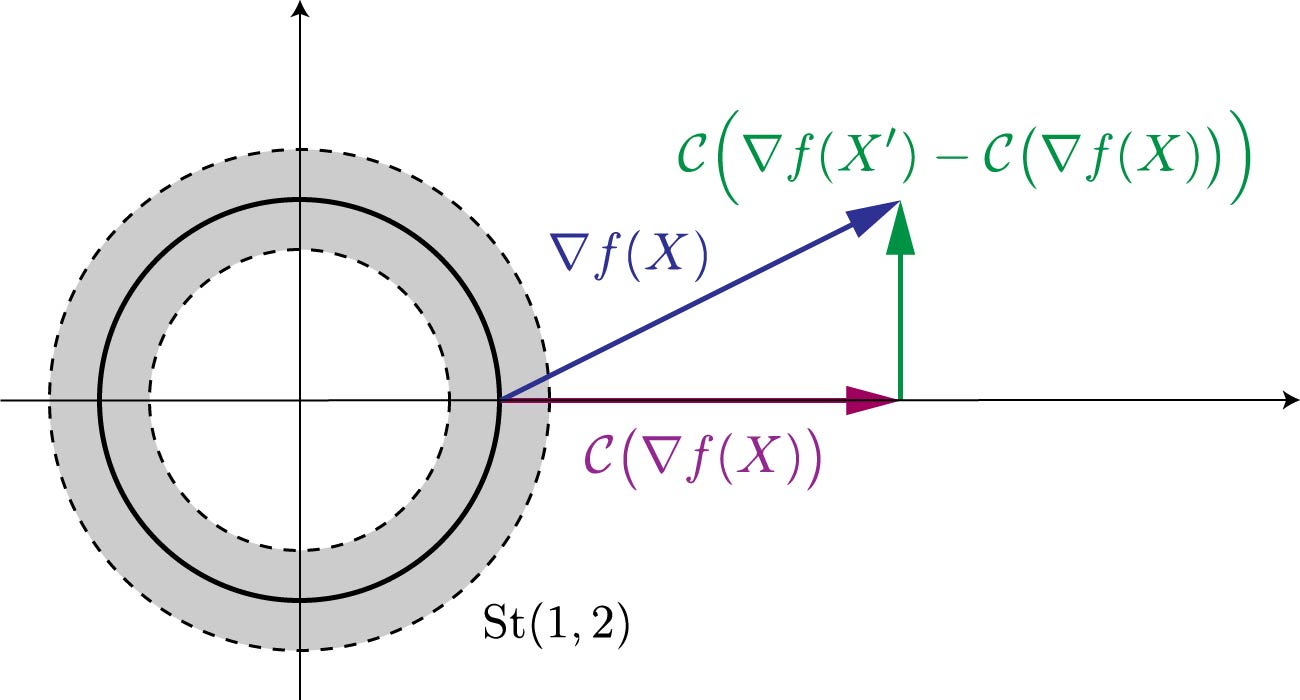}
    \caption{Diagram for the toy example in Appendix \ref{toy-example}.}
    \label{fig:ef-necessity-app}
\end{figure}

The heuristic reason explaining the phenomenon of stagnation is: Greedy compressors choose the directions with largest magnitude in unconstrained Euclidean space, but these directions are not necessarily directions with largest magnitude in feasible subspaces of the Stiefel manifold. In extreme cases, when the two sets of directions do not intersect at all and the greedy compressors cannot give other feasible directions for the current iteration point, the iteration falls into stagnation. Error feedback, however, tends to select directions that were discarded in the previous iteration, ensuring that the feasible directions of the Stiefel manifold could always be selected within a few steps. Hence, error feedback technique overcomes the phenomenon of stagnation. In fact, for any deterministic optimization problem on the Stiefel manifold, when using greedy compressors, stagnation occurs with high probability. Thus this is a general problem that is eventually solved by error feedback technique.

\subsection{Unconstrained Deterministic Optimization with Vanilla Gradient Compression}\label{app:unconstraint-convergence}

The error feedback strategy was introduced in \cite{richtarik_ef21_2021} mainly to eliminate the instability caused by compression among more than one nodes at the stationary point. A straightforward idea is that when using contractive compressors, the same issue does not exist in the single-node scenario without any extra constraint. In fact, we have the following algorithm and its convergence result.

\begin{algorithm}
\caption{Unconstrained Deterministic Optimization with Vanilla Gradient Compression (Single node)}
\label{alg:without-ef}
\begin{algorithmic}[1]
    \STATE \textbf{Input:} starting point $x^0 \in \mathbb{R}^d$, learning rate $\gamma > 0$
    \FOR{$t = 0, 1, 2, \ldots, T-1$}
        \STATE $g^k=\mathcal{C}(\nabla f(x^k))$
        \STATE $x^{k+1} = x^k - \gamma g^k$
    \ENDFOR
\end{algorithmic}
\end{algorithm}

\begin{theorem}
    Suppose \(f:\mathbb{R}^d\rightarrow \mathbb{R}\) is \(L\)-smooth and lower bounded by $f^*$, and the compressor \(\mathcal{C}\) satisfies Definition~\ref{compressor-definition}, if we choose the step size \(\gamma<1/L\), by running Algorithm~\ref{alg:without-ef} for \(K\) iterations, we have
    \begin{align*}
        \frac{1}{K}\sum_{k=0}^{K-1}\Ebb[\|\nabla f(x^k)\|^2]\leq \frac{2(f(x^0)-f^*)}{\gamma \alpha K}
    \end{align*}
\end{theorem}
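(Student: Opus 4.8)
The plan is to run the standard descent-lemma argument for $L$-smooth functions, but carefully track the compression error via the contractive property of $\mathcal{C}$. First I would invoke $L$-smoothness on the iterates $x^k$ and $x^{k+1} = x^k - \gamma g^k$ to get
\begin{align*}
f(x^{k+1}) \le f(x^k) - \gamma \langle \nabla f(x^k), g^k\rangle + \frac{L\gamma^2}{2}\|g^k\|^2.
\end{align*}
Taking expectation over the compressor randomness at step $k$ (conditioned on $x^k$), the cross term becomes $-\gamma\langle \nabla f(x^k), \Ebb_{\mathcal{C}}[g^k]\rangle$. The subtlety is that a contractive compressor need not be unbiased, so I cannot directly replace $\Ebb_{\mathcal{C}}[g^k]$ by $\nabla f(x^k)$. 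Instead I would write $\langle \nabla f(x^k), g^k\rangle = \frac12\|\nabla f(x^k)\|^2 + \frac12\|g^k\|^2 - \frac12\|g^k - \nabla f(x^k)\|^2$ (the polarization identity), so that
\begin{align*}
f(x^{k+1}) \le f(x^k) - \frac{\gamma}{2}\|\nabla f(x^k)\|^2 - \frac{\gamma}{2}\|g^k\|^2 + \frac{\gamma}{2}\|g^k - \nabla f(x^k)\|^2 + \frac{L\gamma^2}{2}\|g^k\|^2.
\end{align*}

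Next I would use the contractive property $\Ebb_{\mathcal{C}}[\|\mathcal{C}(\nabla f(x^k)) - \nabla f(x^k)\|^2] \le (1-\alpha)\|\nabla f(x^k)\|^2$, i.e. $\Ebb_{\mathcal{C}}[\|g^k - \nabla f(x^k)\|^2] \le (1-\alpha)\|\nabla f(x^k)\|^2$. Substituting and taking expectations,
\begin{align*}
\Ebb[f(x^{k+1})] \le \Ebb[f(x^k)] - \frac{\gamma\alpha}{2}\|\nabla f(x^k)\|^2 + \Big(\frac{L\gamma^2}{2} - \frac{\gamma}{2}\Big)\Ebb[\|g^k\|^2].
\end{align*}
Since $\gamma < 1/L$, the coefficient $\frac{L\gamma^2}{2} - \frac{\gamma}{2} = \frac{\gamma}{2}(L\gamma - 1) \le 0$, so that term can be dropped, yielding the clean one-step inequality $\Ebb[f(x^{k+1})] \le \Ebb[f(x^k)] - \frac{\gamma\alpha}{2}\Ebb[\|\nabla f(x^k)\|^2]$.

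Finally I would telescope this from $k = 0$ to $K-1$, use the lower bound $f(x^k) \ge f^*$, and divide by $K$:
\begin{align*}
\frac{\gamma\alpha}{2}\cdot\frac{1}{K}\sum_{k=0}^{K-1}\Ebb[\|\nabla f(x^k)\|^2] \le \frac{f(x^0) - f^*}{K},
\end{align*}
which rearranges to the claimed bound $\frac{1}{K}\sum_{k=0}^{K-1}\Ebb[\|\nabla f(x^k)\|^2] \le \frac{2(f(x^0) - f^*)}{\gamma\alpha K}$. The main obstacle — though a minor one — is handling the possible bias of the compressor; the polarization trick above circumvents it cleanly, and the key insight that makes the proof work is precisely that the negative $-\frac{\gamma}{2}\|g^k\|^2$ term generated by polarization absorbs the positive $\frac{L\gamma^2}{2}\|g^k\|^2$ term from smoothness exactly when $\gamma \le 1/L$, so no separate bound on $\|g^k\|$ (which would otherwise require a gradient-boundedness assumption) is needed.
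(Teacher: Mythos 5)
Your proposal is correct and follows essentially the same route as the paper's own proof: the descent lemma, the polarization identity to split $\langle \nabla f(x^k), g^k\rangle$, the contractive bound $\Ebb[\|g^k-\nabla f(x^k)\|^2]\le(1-\alpha)\|\nabla f(x^k)\|^2$, absorption of the smoothness term by the negative $\|g^k\|^2$ term when $\gamma\le 1/L$, and telescoping. The only cosmetic difference is that the paper keeps the quadratic term as $-(\tfrac{1}{2\gamma}-\tfrac{L}{2})\|x^{k+1}-x^k\|^2$ rather than your equivalent $\tfrac{\gamma}{2}(L\gamma-1)\|g^k\|^2$.
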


\begin{proof}
Using \(L\)-smoothness of $f$, we have
\begin{align*}
    f(x^{k+1})\leq&~ f(x^k)+\langle \nabla f(x^k),x^{k+1}-x^k\rangle +\frac{L}{2}\|x^{k+1}-x^k\|^2\\
    =&~f(x^k)-\gamma\langle \nabla f(x^k), g^k\rangle +\frac{L}{2}\|x^{k+1}-x^k\|^2\\
    =&~f(x^k)-\frac{\gamma}{2}\|\nabla f(x^k)\|^2-\frac{\gamma}{2}\|g^k\|^2+\frac{\gamma}{2}\|g^k-\nabla f(x^k)\|^2+\frac{L}{2}\|x^{k+1}-x^k\|^2\\
    =&~f(x^k)-\frac{\gamma}{2}\|\nabla f(x^k)\|^2-(\frac{1}{2\gamma}-\frac{L}{2})\|x^{k+1}-x^k\|^2+\frac{\gamma}{2}\|g^k-\nabla f(x^k)\|^2.
\end{align*}

Noticing that Definition~\ref{compressor-definition} of contractive compressor leads to 

\begin{equation*}
    \mathbb{E}[\|g^k-\nabla f(x^k)\|^2]\leq (1-\alpha)\|\nabla f(x^k)\|^2,
\end{equation*}
we have

\begin{align*}
    \mathbb{E}[f(x^{k+1})]\leq&~ \mathbb{E}[f(x^k)]-\frac{\gamma}{2}\mathbb{E}[\|\nabla f(x^k)\|^2]-(\frac{1}{2\gamma}-\frac{L}{2})\mathbb{E}[\|x^{k+1}-x^k\|^2]+\frac{\gamma}{2}\mathbb{E}[\|g^k-\nabla f(x^k)\|^2]\\
    \leq &~ \mathbb{E}[f(x^k)]-\frac{\gamma}{2}\mathbb{E}[\|\nabla f(x^k)\|^2]-(\frac{1}{2\gamma}-\frac{L}{2})\mathbb{E}[\|x^{k+1}-x^k\|^2]+\frac{\gamma}{2}(1-\alpha)\mathbb{E}[\|\nabla f(x^k)\|^2]\\
    =&~ \mathbb{E}[f(x^k)]-\frac{\gamma\alpha}{2}\mathbb{E}[\|\nabla f(x^k)\|^2]-(\frac{1}{2\gamma}-\frac{L}{2})\mathbb{E}[\|x^{k+1}-x^k\|^2]
\end{align*}

If $\gamma\leq 1/L$, we immediately have
\begin{equation*}
    \frac{\gamma\alpha}{2}\mathbb{E}[\|\nabla f(x^k)\|^2]\leq \mathbb{E}[f(x^k)]-\mathbb{E}[f(x^{k+1})]
\end{equation*}

After telescoping, we get the final result.

\end{proof}

\section{Details of Convergence Analysis}
\subsection{Proof of Lemma \ref{uniform-safe-step-size-lemma}}\label{proof-uniform-safe-step}
It is important to note that points within \( \mathrm{St}(p,n)^\epsilon \) have bounded singular values. The proof is straightforward and can be referred to \cite{ablin_infeasible_2024}.
\begin{lemma}[Singular Values]
\label{singular-values-bound-lemma}
    If $X\in\mathrm{St}(p,n)^\epsilon$, we have $\sqrt{1-\epsilon}\leq \sigma \leq \sqrt{1+\epsilon}$, for any singular value $\sigma$ of $X$.
\end{lemma}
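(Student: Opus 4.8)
\textbf{Proof proposal for Lemma~\ref{singular-values-bound-lemma}.}

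The plan is to relate the singular values of $X$ to the eigenvalues of the symmetric matrix $X^\top X$, and then use the defining inequality $\|X^\top X - I_p\| \leq \epsilon$ of the safe region $\mathrm{St}(p,n)^\epsilon$ to control those eigenvalues. Recall that the singular values $\sigma_1, \dots, \sigma_p$ of $X \in \mathbb{R}^{n\times p}$ are precisely the square roots of the eigenvalues $\nu_1, \dots, \nu_p$ of the positive semidefinite matrix $X^\top X \in \mathbb{R}^{p\times p}$. Hence it suffices to prove $1-\epsilon \leq \nu_j \leq 1+\epsilon$ for every eigenvalue $\nu_j$, and then take square roots (which is monotone on nonnegative reals, and legitimate here since $\epsilon < 3/4 < 1$ guarantees $1-\epsilon > 0$).

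First I would observe that $X^\top X - I_p$ is symmetric, so its operator norm $\|X^\top X - I_p\|$ equals its spectral radius, i.e.\ $\max_j |\nu_j - 1|$, because the eigenvalues of $X^\top X - I_p$ are exactly $\nu_j - 1$. The safe-region condition $\|X^\top X - I_p\| \leq \epsilon$ then gives $|\nu_j - 1| \leq \epsilon$ for all $j$, equivalently $1-\epsilon \leq \nu_j \leq 1+\epsilon$. Taking square roots yields $\sqrt{1-\epsilon} \leq \sigma_j \leq \sqrt{1+\epsilon}$ for every singular value $\sigma_j$ of $X$, which is the claim.

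There is essentially no obstacle here — the only mild subtlety is making sure the norm $\|\cdot\|$ in Definition~\ref{safe-region-defi} is interpreted as the spectral (operator $2$-)norm so that it coincides with the spectral radius on symmetric matrices; if instead it were the Frobenius norm the bound $\|X^\top X - I_p\|_F \leq \epsilon$ is only stronger, so the conclusion still holds a fortiori. As the excerpt itself notes, this is a standard fact and the argument is short; I would simply cite \cite{ablin_infeasible_2024} for the routine details while recording the one-line spectral argument above.
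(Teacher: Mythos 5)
Your proposal is correct and is exactly the standard spectral argument that the paper (which simply defers to \cite{ablin_infeasible_2024} without reproducing the proof) intends: the eigenvalues of the symmetric matrix $X^\top X - I_p$ are bounded by $\epsilon$ in absolute value under the safe-region condition, and taking square roots of the eigenvalues of $X^\top X$ gives the singular-value bounds. Your remark that the Frobenius-norm interpretation only strengthens the hypothesis is also accurate, so there is nothing to correct.
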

And the proof of Lemma \ref{uniform-safe-step-size-lemma} is also based on \cite{ablin_infeasible_2024}. 
\begin{proof}
\cite{ablin_infeasible_2024} proved that for any given $X\in\mathrm{St}(p,n)^\epsilon$ and $\gbo\in\RR{n\times p}$, there exists an upper bound of step size $\gamma^*(X;\boldsymbol{g})$. If step size $\gamma(X;\boldsymbol{g})\leq \gamma^*(X;\boldsymbol{g})$, the next iteration remains in $\mathrm{St}(p,n)^\epsilon$. Further, if $\|\mathrm{skew}(\boldsymbol{g}X\tranT)X\|_F= a$, the upper-bound of step size $\gamma^*(X;\boldsymbol{g})$ will not disappear but can be lower-bounded as 
\begin{equation*}
    \gamma^*(X;\boldsymbol{g})\geq\min\Big\{\frac{\lambda(1-\epsilon)\epsilon}{a^2+\lambda^2(1+\epsilon)\epsilon^2},\sqrt{\frac{\epsilon}{2a^2}},\frac{1}{2\lambda}\Big\}.
\end{equation*}

Let's take one step further, if $\|\boldsymbol{g}\|_F\leq L'$, we have:
\begin{align}
    \|\mathrm{skew}(\boldsymbol{g}X\tranT)X\|_F^2\leq (1+\epsilon)\|\mathrm{skew}(\boldsymbol{g}X\tranT)\|_F^2\leq (1+\epsilon)\|\boldsymbol{g}X\tranT\|_F^2\leq (1+\epsilon)^2\|\boldsymbol{g}\|_F^2\leq (1+\epsilon)^2 (L')^2,
\end{align}
where the first and the third equations are from Lemma \ref{singular-values-bound-lemma} and the second is due to the orthogonality between symmetric and skew-symmetric matrices. Therefore, if we choose a uniform step size 
\begin{equation*}
    \gamma \leq \gamma_s:=\min\Big\{\frac{\lambda(1-\epsilon)\epsilon}{(1+\epsilon)^2(L')^2+\lambda^2(1+\epsilon)\epsilon^2},\sqrt{\frac{\epsilon}{2(1+\epsilon)^2(L')^2}},\frac{1}{2\lambda}\Big\},
\end{equation*}
then for any given $X,\boldsymbol{g}$, the next iteration will remain in the safe region.

\end{proof}

\subsection{Proof of Lemma \ref{merit-function-bound}}\label{proof-merit-function}

\begin{lemma}[Gradient of merit function] $m(X)$ is the merit function defined in \eqref{merit-function}, and its gradient can be expressed as
\begin{equation}
    \nabla m(X)=\nabla f(X)-\frac{1}{2}\mathcal{J}_X(\Phi)^*[X\tranT X-I_p]-X\mathrm{sym}(X\tranT \nabla f(X))+\mu \nabla \mathcal{N}(X)\label{nablaLX},
\end{equation}
where $\Phi(X)=\mathrm{sym}(\nabla f(X)\tranT X):\RR{n\times p}\rightarrow \RR{n\times p}$, $\mathcal{J}_X(\Phi)$ denotes its derivative at $X$, and $\mathcal{J}_X(\Phi)^*[X\tranT X-I_p]$ denotes the adjoint of the Jacobian in the sense of the Frobenius inner product of $\Phi(X)=\mathrm{sym}(X\tranT \nabla f(X))$ in $X$ evaluated in the direction $X\tranT X-I_p$. Further, let $\mathrm{vec}(\cdot):\RR{m\times n}\rightarrow \RR{mn}$ denote the vectorization operation, and let $H_X\in\RR{np\times np}$ denote the matrix representation of the Hessian of $f$ at $X$, we have
\begin{equation}
    \mathrm{vec}(\mathcal{J}_X(\Phi)^*[X\tranT X-I_p])=H_X\mathrm{vec}(\nabla \mathcal{N}(X))+\mathrm{vec}(\nabla f(X)(X\tranT X-I_p)).
\end{equation}
\end{lemma}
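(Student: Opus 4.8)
The plan is to compute the gradient of the merit function $m(X) = f(X) - h(X) + \mu\mathcal{N}(X)$ term by term, exploiting the linearity of the gradient operator. The term $\nabla f(X)$ is immediate, and $\nabla\mathcal{N}(X) = X(X^\top X - I_p)$ is recalled from the definition of the penalty. The main work is differentiating $h(X) = \frac{1}{2}\langle \mathrm{sym}(X^\top\nabla f(X)), X^\top X - I_p\rangle$, which is a bilinear-type pairing in which $X$ appears in three places: inside $\Phi(X) := \mathrm{sym}(X^\top\nabla f(X))$ (which itself involves both $X$ and $\nabla f(X)$, so is generically quadratic-or-worse in $X$), and quadratically in the factor $X^\top X - I_p$.

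First I would write $h(X) = \frac{1}{2}\langle \Phi(X), \mathcal{N}'(X)\text{-factor}\rangle$ and apply the product rule for directional derivatives: for a direction $\xi \in \mathbb{R}^{n\times p}$,
\begin{align*}
Dh(X)[\xi] = \tfrac{1}{2}\langle \mathcal{J}_X(\Phi)[\xi], X^\top X - I_p\rangle + \tfrac{1}{2}\langle \Phi(X), X^\top\xi + \xi^\top X\rangle.
\end{align*}
For the first piece I would move $\mathcal{J}_X(\Phi)$ to the other side of the inner product via its adjoint, obtaining $\frac{1}{2}\langle \xi, \mathcal{J}_X(\Phi)^*[X^\top X - I_p]\rangle$; this is exactly where the adjoint-Jacobian term in \eqref{nablaLX} comes from. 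For the second piece, since $\Phi(X)$ is symmetric, $\langle \Phi(X), X^\top\xi + \xi^\top X\rangle = 2\langle \Phi(X), X^\top\xi\rangle = 2\langle X\Phi(X), \xi\rangle = 2\langle X\,\mathrm{sym}(X^\top\nabla f(X)), \xi\rangle$, giving the $-X\,\mathrm{sym}(X^\top\nabla f(X))$ term after accounting for the overall minus sign in front of $h$. Collecting all contributions and reading off the Riesz representative with respect to $\langle\cdot,\cdot\rangle$ yields \eqref{nablaLX}.

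For the second identity, I would compute $\mathcal{J}_X(\Phi)^*[X^\top X - I_p]$ explicitly. Differentiating $\Phi(X) = \mathrm{sym}(X^\top\nabla f(X))$ in direction $\xi$ produces two terms: one from differentiating the explicit $X^\top$, namely $\mathrm{sym}(\xi^\top\nabla f(X))$, and one from differentiating $\nabla f(X)$, namely $\mathrm{sym}(X^\top H_X[\xi])$, where $H_X$ is the Hessian of $f$. Pairing with $S := X^\top X - I_p$ (symmetric) and using $\langle \mathrm{sym}(M), S\rangle = \langle M, S\rangle$ for symmetric $S$, I would transpose each term to isolate $\xi$: the first term contributes $\nabla f(X)S$ to the adjoint, and the second contributes $(H_X)^*[XS] = H_X[XS]$ by self-adjointness of the Hessian, i.e.\ $H_X[\nabla\mathcal{N}(X)]$. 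Vectorizing gives the claimed formula. The main obstacle is bookkeeping the adjoint and symmetrization operations carefully — in particular, verifying that the symmetrizer $\mathrm{sym}$ can be dropped against the symmetric argument $S$ and that the Hessian's self-adjointness is being applied in the correct slot — rather than any deep idea; the computation is otherwise routine matrix calculus on $\mathbb{R}^{n\times p}$.
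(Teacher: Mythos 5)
Your proposal is correct and carries out exactly the computation the paper omits: the paper does not prove this lemma itself but defers to the cited reference \cite{ablin_infeasible_2024}, and your derivation supplies the standard argument — product rule on the pairing defining $h$, identification of the adjoint Jacobian term, the cancellation $\langle \Phi(X), X^\top\xi+\xi^\top X\rangle = 2\langle X\Phi(X),\xi\rangle$ using symmetry of $\Phi(X)$, dropping $\mathrm{sym}$ against the symmetric argument $X^\top X - I_p$, and self-adjointness of $H_X$ to land on $H_X[\nabla\mathcal{N}(X)]$. All steps check out, so this is a faithful filling-in of the outsourced proof rather than a different route.
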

The main part of proof for this lemma can be found at \cite{ablin_infeasible_2024}.

Now we are ready to prove lemma \ref{merit-function-bound}.
\begin{proof}
    
\textbf{STEP 1.} We will bound all the terms from the inner product $\langle \tilde{\Lambda}(X; \gbo),\nabla m(X)\rangle$ separately by considering the four terms in \eqref{nablaLX} respectively.

The inner product between the first term of \eqref{nablaLX} and the descent direction is
\begin{align}
    \langle \tilde{\Lambda}(X; \gbo),\nabla f(X)\rangle=~&\langle \mathrm{skew}(\gbo X\tranT)X+\lambda X(X\tranT X-I_p),\nabla f(X)\rangle\notag\\
    =~&\langle \mathrm{skew}(\gbo X\tranT),\nabla f(X)X\tranT\rangle+\lambda \langle X\tranT X-I_p,X\tranT \nabla f(X)\rangle\notag\\
    =~&\langle \mathrm{skew}(\gbo X\tranT),\mathrm{skew}(\nabla f(X)X\tranT)\rangle+\lambda \langle X\tranT X-I_p,\mathrm{sym}(X\tranT \nabla f(X))\rangle\label{4-1}.
\end{align}
The first inner product term can be lower bounded as
\begin{align}
    &\langle \mathrm{skew}(\gbo X\tranT),\mathrm{skew}(\nabla f(X)X\tranT)\rangle\notag\\
    =~&\frac{1}{2}\|\mathrm{skew}(\gbo X\tranT)\|_F^2+\frac{1}{2}\|\mathrm{skew}(\nabla f(X)X\tranT)\|_F^2-\frac{1}{2}\|\mathrm{skew}((\gbo-\nabla f(X))X\tranT)\|_F^2\notag\\
    \geq~&\frac{1}{2}\|\mathrm{skew}(\gbo X\tranT)\|_F^2+\frac{1}{2}\|\mathrm{skew}(\nabla f(X)X\tranT)\|_F^2-\frac{\sigma_1^2}{2}\|\gbo -\nabla f(X)\|_F^2\label{4-1-bound},
\end{align}
where the last inequality is due to the orthogonality between symmetric and skew-symmetric parts of one matrix, and $\sigma_1$ is the maximum singular value of $X$.

For the inner product between the second term of \eqref{nablaLX} and the descent direction,
\begin{align}
    \langle \tilde{\Lambda}(X;\gbo),-\frac{1}{2}\mathcal{J}_X(\Phi)^*[X\tranT X-I_p]\rangle=&-\frac{1}{2}\lambda \langle H_X \mathrm{vec}(\nabla \mathcal{N}(X)),\mathrm{vec}(\nabla \mathcal{N}(X))\rangle\label{4-2-1}\\
    &-\frac{1}{2}\langle H_X \mathrm{vec}(\nabla \mathcal{N}(X)),\mathrm{vec}(\mathrm{skew}(\gbo X\tranT)X)\rangle\label{4-2-2}\\
    &-\frac{1}{2}\langle \nabla f(X)\tranT \mathrm{skew}(\gbo X\tranT)X,X\tranT X-I_p\rangle\label{4-2-3}\\
    &-\frac{1}{2}\lambda \langle \mathrm{sym}(X\tranT\nabla f(X)),(X\tranT X-I_p)^2\rangle\label{4-2-4}.
\end{align}

In the third term of the inner product, the lossy gradient estimate $\gbo$ has no effect:
\begin{align}
    \langle \tilde{\Lambda}(X;\gbo),-X\mathrm{sym}(X\tranT \nabla f(X))\rangle =&~\langle \mathrm{skew}(\gbo X\tranT)X+\lambda \nabla \mathcal{N}(X),-X\mathrm{sym}(X\tranT \nabla f(X))\rangle\\
    =&-\lambda \langle \mathrm{sym}(X\tranT \nabla f(X)),X\tranT X(X\tranT X-I_p)\rangle\label{4-3}.
\end{align}

The last term is also not affected by the lossy gradient estimate:
\begin{equation}
    \langle \tilde{\Lambda}(X;\gbo ),\mu \nabla \mathcal{N}(X)\rangle =\lambda \mu \|\nabla \mathcal{N}(X)\|_F^2\label{4-4}.
\end{equation}

Adding all the four terms and applying the lower bound in the first term gives
\begin{align}
    \langle \tilde{\Lambda}(X;\gbo),\nabla m(X)\rangle \geq&~ \frac{1}{2}\|\mathrm{skew}(\gbo X\tranT)\|_F^2+\frac{1}{2}\|\mathrm{skew}(\nabla f(X)X\tranT)\|_F^2-\frac{\sigma_1^2}{2}\|\gbo-\nabla f(X)\|_F^2\label{5-1}\\
    +&~\lambda \langle (\mu I_{np}-\frac{1}{2}H_X)\mathrm{vec}(\nabla \mathcal{N}(X)),\mathrm{vec}(\nabla \mathcal{N}(X))\rangle\label{5-2}\\
    -&~\frac{3}{2}\lambda \langle (X\tranT X-I_p)^2,\mathrm{sym}(X\tranT \nabla f(X))\rangle\label{5-3}\\
    -&~\frac{1}{2}\langle H_X\mathrm{vec}(\nabla \mathcal{N}(X)),\mathrm{vec}(\mathrm{skew}(\gbo X\tranT)X)\rangle\label{5-4}\\
    -&~\frac{1}{2}\langle \nabla f(X)\tranT \mathrm{skew}(\gbo X\tranT)X,X\tranT X-I_p\rangle\label{5-5},
\end{align}
where the first line \eqref{5-1} comes from the lower bound \eqref{4-1-bound}; the second line \eqref{5-2} is a combination of \eqref{4-2-1} and \eqref{4-4}; the third line \eqref{5-3} comes from the second term of \eqref{4-1}, \eqref{4-2-4} and \eqref{4-3}; the fourth and the fifth terms are the rest terms \eqref{4-2-2} and \eqref{4-2-3}.

\textbf{STEP 2.} we should separately bound the new five terms. Setting aside the first term, we first analyze the second term \eqref{5-2}:

\begin{align}
    \lambda \langle (\mu I_{np}-\frac{1}{2}H_X)\mathrm{vec}(\nabla \mathcal{N}(X)),\mathrm{vec}(\nabla \mathcal{N}(X))\rangle \geq& \lambda (\mu-\frac{L}{2})\|\nabla \mathcal{N}(X)\|_F^2\notag\\
    \geq& 4\lambda (\mu-\frac{L}{2})\sigma_p^2\mathcal{N}(X)\label{lb-2},
\end{align}
where $L$ is the Lipschitz constant of $\nabla f(X)$ over $\mathrm{St}(p,n)^\epsilon$ defined by Assumption \ref{local-smoothness-assumption}, and $\sigma_p$ is the minimum singular value of $X$; the first inequality comes from the smoothness of $f(X)$ and the second by the property of singular values of $X$.

The third term \eqref{5-3} can be lower bounded using Cauchy-Schwarz inequality as 
\begin{align}
    -\frac{3}{2}\lambda \langle (X\tranT X-I_p)^2,\mathrm{sym}(X\tranT \nabla f(X))\rangle \geq &~ -6\lambda \mathcal{N}(X)\|\mathrm{sym}(X\tranT \nabla f(X))\|_F\notag\\
    \geq &~ -6\lambda \sigma_1 L' \mathcal{N}(X)\label{lb-3},
\end{align}
where the second inequality is again due to the orthogonality between symmetric and skew-symmetric matrices and $L'$ is such that $\|\nabla f(X)\|_F\leq L'$ for all $X\in\mathrm{St}(p,n)^\epsilon$ defined by \ref{global-gradient-bound-assumption}.

We further use Cauchy-Schwarz inequality to bound the fourth and the fifth terms. The fourth term \eqref{5-4} is lower bounded as
\begin{align}
    -\frac{1}{2}\langle H_X\mathrm{vec}(\nabla \mathcal{N}(X)),\mathrm{vec}(\mathrm{skew}(\gbo  X\tranT)X)\rangle \geq & -\frac{L}{2}\|X(X\tranT X-I_p)\|_F\|\mathrm{skew}(\gbo  X\tranT)X\|_F\notag\\
    \geq & -L\sigma_1\sqrt{\mathcal{N}(X)}\|\mathrm{skew}(\gbo X\tranT)X\|_F.
\end{align}

The fifth term is lower bounded as
\begin{align}
    -\frac{1}{2}\langle \nabla f(X)\tranT \mathrm{skew}(\gbo X\tranT)X,X\tranT X-I_p\rangle \geq&-\frac{1}{2}\|\nabla f(X)\|_F\|\mathrm{skew}(\gbo X\tranT)X\|_F\|X\tranT X-I_p\|_F\notag\\
    \geq & -L'\sqrt{\mathcal{N}(X)}\|\mathrm{skew}(\gbo X\tranT)X\|_F.
\end{align}
Putting the fourth and the fifth terms together, we have
\begin{align}
    &-\frac{1}{2}\langle H_X\mathrm{vec}(\nabla \mathcal{N}(X)),\mathrm{vec}(\mathrm{skew}(\gbo X\tranT)X)\rangle -\frac{1}{2}\langle \nabla f(X)\tranT \mathrm{skew}(\gbo X\tranT)X,X\tranT X-I_p\rangle\notag\\
    \geq &-(L'+L\sigma_1)\sqrt{\mathcal{N}(X)}\|\mathrm{skew}(g X\tranT)X\|_F\notag\\
    \geq & -\frac{1}{2}(L'+L\sigma_1)(b \mathcal{N}(X)+b^{-1}\|\mathrm{skew}(\gbo X\tranT)X\|_F^2)\label{lb-45},
\end{align}
where in the last inequality we use the average inequality $\sqrt{xy}\leq \frac{1}{2}(x+y)$ with $x=b\mathcal{N}(X)$ and $y=b^{-1}\|\mathrm{skew}(\gbo X\tranT)X\|_F^2$ for an arbitrary $b>0$ which will be specified later.

Now for the first term \eqref{5-1}, by using the property of singular values of $X$ again, we have
\begin{equation}
    \|\mathrm{skew}(\gbo X\tranT)\|_F\geq\sigma_1^{-1}\|\mathrm{skew}(\gbo X\tranT)X\|_F,\quad\|\mathrm{skew}(\nabla f(X) X\tranT)\|_F\geq\sigma_1^{-1}\|\mathrm{grad}f(X)\|_F\label{lb-1}.
\end{equation}

Adding all lower bounds \eqref{lb-2}, \eqref{lb-3}, \eqref{lb-45} and \eqref{lb-1} together, we have a total lower bound expressed as
\begin{align}
    \langle \tilde{\Lambda}(X;\gbo),\nabla m(X)\rangle \geq&~ \frac{1}{2\sigma_1^2}\|\mathrm{grad}f(X)\|_F^2+\|\mathrm{skew}(\gbo X\tranT)X\|_F^2\Big(\frac{1}{2\sigma_1^2}-\frac{1}{2}\frac{(L'+L\sigma_1)}{b}\Big)\notag\\
    -&~\frac{\sigma_1^2}{2}\|\gbo-\nabla f(X)\|_F^2+\mathcal{N}(X)\Big(4\lambda(\mu-\frac{L}{2})\sigma_p^2-6\lambda \sigma_1 L'-\frac{1}{2}(L'+L\sigma_1)b\Big).
\end{align}
Finally, we choose $b$ so that the coefficient of $\|\mathrm{skew}(\gbo X\tranT)X\|_F^2$ equals to $1/4$, namely $b=\frac{2\sigma_1^2(L'+L\sigma_1)}{2-\sigma_1^2}$, and noticing $\sqrt{1-\epsilon}\leq \sigma_p\leq\sigma_1\leq\sqrt{1+\epsilon}\leq \sqrt{2}$, we have
\begin{align}
    \langle \tilde{\Lambda}(X;\gbo),\nabla m(X)\rangle \geq&~ \frac{1}{4}\|\mathrm{grad}f(X)\|_F^2+\frac{1}{4}\|\mathrm{skew}(\gbo X\tranT)X\|_F^2-\|\gbo-\nabla f(X)\|_F^2\notag\\
    +&~\mathcal{N}(X)\Big(4\lambda(\mu-\frac{L}{2})\sigma_p^2-6\lambda \sigma_1 L'-\sigma_1^2\frac{(L'+L\sigma_1)^2}{2-\sigma_1^2}\Big)\label{choose-mu}.
\end{align}
Denoting $\hat{L}=\max\{L',L\}$, the last term of \eqref{choose-mu} can be simplified as
\begin{align}
    &\mathcal{N}(X)(4\lambda(\mu-\frac{L}{2})\sigma_p^2-6\lambda \sigma_1L'-\sigma_1^2\frac{(L'+L\sigma_1)^2}{2-\sigma_1^2}\notag\\
    \geq&~\mathcal{N}(X)(4\lambda(\mu-\frac{L}{2})(1-\epsilon)-6\lambda \sqrt{1+\epsilon} L'-\hat{L}^2(1+\epsilon)\frac{(2\sqrt{1+\epsilon})^2}{2-(1-\epsilon)})\notag\\
    \geq &~ \mathcal{N}(X)(4\lambda (\mu-\frac{L}{2})(1-\epsilon )-6\lambda \sqrt{1+\epsilon}L'-4\hat{L}^2(1+\epsilon))\notag\\
    \geq &~ \lambda\mu\mathcal{N}(X),
\end{align}
where in the last inequality, we choose $\mu$ as
\begin{equation}
    \mu\geq \frac{2}{3-4\epsilon}\Big(L(1-\epsilon)+3\sqrt{1+\epsilon}L'+2\hat{L}^2\frac{1+\epsilon}{\lambda}\Big),
\end{equation}
which leads to the conclusion.
\end{proof}

\subsection{Proof of Theorem \ref{general-convergence-theorem}}\label{general-convergence-proof-appendix}
We introduce some notation for convenience.
\begin{itemize}
    \item Auxiliary variables: momentum vector of the master\(
    \vbo^k:=\frac{1}{N}\sum_{i=1}^N \vbo_i^k\); random variable of the master \( \boldsymbol{\xi}^k:=(\xi_1^k,\dots,\xi_N^k)\); stochastic gradient of the master \(
    \nabla F(X^k;\boldsymbol{\xi}^k):=\frac{1}{N}\sum_{i=1}^N \nabla F(X^k;\xi^k_i)\).
    
    \item Momentum and stochastic error:  error of each node \(P_i^k:=\|\vbo_i^k-\nabla f_i(X^k)\|_F^2\); averaged error \(\tilde{P}^k:=\frac{1}{N}\sum_{i=1}^N \|\vbo_i^k-\nabla f_i(X^k)\|_F^2\); error of the master \(P^k:=\|\vbo^k-\nabla f(X^k)\|_F^2\).

    \item Compression error: error of each node \(G_i^k:=\|\gbo_i^k-\vbo_i^k\|_F^2\); averaged error \(\tilde{G}^k:=\frac{1}{N}\sum_{i=1}^N \|\gbo_i^k-\vbo_i^k\|_F^2\); error of the master \(G^k:=\|\gbo^k-\vbo^k\|_F^2\).

    \item Filtrations: filtration for the conditional expectation of stochastic gradient \(\mathcal{F}^k_P:=\{\boldsymbol{\xi}^0,X^1,\boldsymbol{\xi}^1,X^2,\dots,\boldsymbol{\xi}^{k-1},X^k\}\); filtration for the conditional expectation of compressor \(\mathcal{F}^k_C:=\{\boldsymbol{\xi}^0,X^1,\boldsymbol{\xi}^1,X^2,\dots,\boldsymbol{\xi}^{k-1},X^k,\boldsymbol{\xi}^k\}\).
\end{itemize}

Next, for arbitrary momentum factor $\eta$, 
The following two lemmas provide the iterative formats of two kinds of errors.
\begin{lemma}
    \label{lemma-P}
    The iteration of momentum and stochastic error satisfies
    \begin{align}
        \Ebb[P_i^{k+1}]\leq&~ (1-\eta)\Ebb[P_i^k]+(1-\eta)^2(1+\frac{1}{\eta})L_i^2\Ebb\big[\|X^{k+1}-X^k\|_F^2\big]+\eta^2\sigma^2;\label{P-1}\\
        \Ebb[\tilde{P}^{k+1}]\leq&~ (1-\eta)\Ebb[\tilde{P}^k]+(1-\eta)^2(1+\frac{1}{\eta})\tilde{L}^2\Ebb\big[\|X^{k+1}-X^k\|_F^2\big]+\eta^2\sigma^2;\label{P-2}\\
        \Ebb[P^{k+1}]\leq&~(1-\eta)\Ebb[P^k]+(1-\eta)^2(1+\frac{1}{\eta})L^2\Ebb\big[\|X^{k+1}-X^k\|_F^2\big]+\frac{\eta^2\sigma^2}{N}\label{P-3}.
    \end{align}
\end{lemma}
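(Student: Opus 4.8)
The plan is to derive each of the three recursions \eqref{P-1}--\eqref{P-3} by the same two-step argument: first split $\vbo^{k+1}-\nabla f(X^{k+1})$ (resp.\ the node-wise or averaged analogue) into a ``momentum contraction'' piece and a ``stochastic noise'' piece using the update \eqref{local-update-form-1}, then control the drift of $\nabla f$ along one iteration by Lipschitz smoothness (Assumption~\ref{local-smoothness-assumption}). Concretely, subtracting $\nabla f_i(X^{k+1})$ from \eqref{local-update-form-1} and rewriting gives
\begin{align*}
\vbo_i^{k+1}-\nabla f_i(X^{k+1})
=&~(1-\eta)\big(\vbo_i^k-\nabla f_i(X^{k})\big)
+(1-\eta)\big(\nabla f_i(X^{k})-\nabla f_i(X^{k+1})\big)\\
&~+\eta\big(\nabla F(X^{k+1};\xi_i^{k+1})-\nabla f_i(X^{k+1})\big).
\end{align*}
The last term has zero conditional mean given $\mathcal{F}_P^{k+1}$ (Assumption~\ref{bounded-variance-assumption}, unbiasedness) and $X^{k+1}$ is $\mathcal{F}_P^{k+1}$-measurable, so after taking $\Ebb[\,\cdot\mid\mathcal{F}_P^{k+1}]$ the cross terms involving the noise vanish; its second moment is bounded by $\sigma^2$, producing the $\eta^2\sigma^2$ additive term.

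Next I would bound the squared norm of the remaining (deterministic, conditional on $\mathcal{F}_P^{k+1}$) part $(1-\eta)\big[(\vbo_i^k-\nabla f_i(X^k))+(\nabla f_i(X^k)-\nabla f_i(X^{k+1}))\big]$ using Young's inequality in the form $\|a+b\|_F^2\le(1+\eta)\|a\|_F^2+(1+\tfrac{1}{\eta})\|b\|_F^2$. This yields $(1-\eta)^2(1+\eta)\|\vbo_i^k-\nabla f_i(X^k)\|_F^2+(1-\eta)^2(1+\tfrac1\eta)\|\nabla f_i(X^k)-\nabla f_i(X^{k+1})\|_F^2$, and since $(1-\eta)^2(1+\eta)=(1-\eta)(1-\eta^2)\le 1-\eta$, the first coefficient collapses to $1-\eta$. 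Applying $L_i$-smoothness to the second term gives $(1-\eta)^2(1+\tfrac1\eta)L_i^2\|X^{k+1}-X^k\|_F^2$. Taking total expectation produces \eqref{P-1}. For \eqref{P-2}, average \eqref{P-1} over $i=1,\dots,N$; the smoothness term averages to $\tfrac1N\sum_i L_i^2=\tilde L^2$ by definition of $\tilde L$, and the noise terms each contribute $\eta^2\sigma^2$, so the average still contributes $\eta^2\sigma^2$ (the $\xi_i$ being used individually, not averaged, at this stage). For \eqref{P-3}, work directly with $\vbo^k=\tfrac1N\sum_i\vbo_i^k$: the same decomposition holds with $\nabla f=\tfrac1N\sum_i\nabla f_i$, the deterministic part is handled by $L$-smoothness of $f$ (with $L\le\max_i L_i$), and the key difference is that the noise term is now $\tfrac{\eta}{N}\sum_i(\nabla F(X^{k+1};\xi_i^{k+1})-\nabla f_i(X^{k+1}))$, a sum of $N$ conditionally independent, mean-zero terms each of second moment $\le\sigma^2$, so its second moment is $\le\eta^2\sigma^2/N$ — this is exactly where the linear-speedup factor $1/N$ enters.

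The only mild subtlety — the ``hard part,'' though it is routine here — is the bookkeeping of conditioning: one must verify that $X^{k+1}$ is measurable with respect to the filtration against which the fresh stochastic gradients $\nabla F(X^{k+1};\xi_i^{k+1})$ are unbiased, so that the cross terms genuinely vanish and smoothness can be applied to a fixed (conditioned) increment $X^{k+1}-X^k$. Given the filtration definitions $\mathcal{F}_P^{k}$ in the notation list, $X^{k+1}$ depends only on $X^k$ and the compression randomness up to step $k$ (via $\tilde\gbo^k$), hence is independent of $\xi_i^{k+1}$; with this observed, the argument above goes through verbatim and the three inequalities follow.
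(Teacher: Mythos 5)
Your proposal is correct and follows essentially the same route as the paper's proof: the same decomposition of $\vbo_i^{k+1}-\nabla f_i(X^{k+1})$ into a contracted momentum term plus a conditionally mean-zero noise term, the same use of unbiasedness to kill the cross term and bound the noise by $\eta^2\sigma^2$ (or $\eta^2\sigma^2/N$ after averaging independent noises for \eqref{P-3}), the same Young's inequality with parameter $\eta$ combined with $(1-\eta)^2(1+\eta)\le 1-\eta$, and the same application of Lipschitz smoothness; the only cosmetic difference is that you split into three terms before applying Young's inequality whereas the paper splits into two and refines afterward. No gaps.
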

\begin{proof}
For the first inequality \eqref{P-1}, 
\begin{align}
    \Ebb[P_i^{k+1}]=&~ \Ebb\big[\|\vbo_i^{k+1}-\nabla f_i(X^{k+1})\|_F^2 \big]\notag\\
    =&~\Ebb\big[\|(1-\eta)\vbo_i^{k}+\eta\nabla F(X^{k+1};\xi_i^{k+1})-\nabla f_i(X^{k+1})\|_F^2\big]\notag\\
    =&~ \Ebb\Big[\big\|(1-\eta)\big(\vbo_i^k-\nabla f_i(X^{k+1})\big)+\eta \big(\nabla F(X^{k+1};\xi_i^{k+1})-\nabla f_i(X^{k+1})\big)\big\|_F^2\Big]\notag\\
    =&~\Ebb\bigg[\Ebb_{\xi_i^{k+1}} \Big[ \big\|(1-\eta)\big(\vbo_i^k-\nabla f_i(X^{k+1})\big)+\eta \big(\nabla F(X^{k+1};\xi_i^{k+1})-\nabla f_i(X^{k+1})\big)\big\|_F^2\Big|\mathcal{F}^{k+1}_P\Big]\bigg]\notag\\
    \leq&~\Ebb \big[ (1-\eta)^2\|\vbo_i^k-\nabla f_i(X^{k+1})\|_F^2 \big]+
    \eta^2 \sigma^2\notag\\
    \leq &~(1-\eta)^2(1+\eta)\Ebb\big[\|\vbo_i^k-\nabla f_i(X^{k})\|_F^2\big] +(1-\eta)^2(1+\frac{1}{\eta})\Ebb\big[\|\nabla f_i(X^{k+1})-\nabla f_i(X^k)\|_F^2 ]+\eta^2\sigma^2\notag\\
    \leq &~(1-\eta)\Ebb[P_i^k]+(1-\eta)^2(1+\frac{1}{\eta})L_i^2\Ebb\big[\|X^{k+1}-X^k\|_F^2\big]+\eta^2\sigma^2,
\end{align}
where the second equation is by the update rule of $\vbo_i^{k+1}$, the first inequality is due to Assumption \ref{bounded-variance-assumption}, the second inequality is the consequence of Young's inequality and the last is by the smoothness of each local function.

By simply summing all inequalities in terms of node from $1$ to $N$, we get the second inequality \eqref{P-2}. Noticing that $\nabla F(X^k;\boldsymbol{\xi}^k):=\frac{1}{N}\sum_{i=1}^N \nabla F(X^k;\xi^k_i)$ and according to the independency of each entry of $\boldsymbol{\xi}$, we have $\displaystyle{\Ebb_{\xi^{k+1}}\big[\|\nabla F(X^{k+1};\boldsymbol{\xi}^{k+1})-\nabla f(X^{k+1})\|_F^2\big|\mathcal{F}_P^{k+1}\big]\leq \sigma^2/N}$. Similarly, we can derive the third inequality \eqref{P-3}.

\end{proof}

\begin{lemma}
    \label{lemma-G}
    The iteration of compression error satisfies
    \begin{align}
        \Ebb[G_i^{k+1}]\leq&~ (1-\theta)\Ebb[G_i^k]+2\beta\eta^2L_i^2\Ebb\big[\|X^{k+1}-X^k\|_F^2\big]+2\beta\eta^2\Ebb[P_i^k]+(1-\alpha)\eta^2\sigma^2;\label{G-1}\\
        \Ebb[\tilde{G}^{k+1}]\leq&~ (1-\theta)\Ebb[\tilde{G}^k]+2\beta\eta^2\tilde{L}^2\Ebb\big[\|X^{k+1}-X^k\|_F^2\big]+2\beta\eta^2\Ebb[\tilde{P}^k]+(1-\alpha)\eta^2\sigma^2,\label{G-2}
    \end{align}
    where $\theta$ and $\beta$ are two scalars related to the contractive factor $\alpha$, i.e., $\theta:=1-\sqrt{1-\alpha}$ and $\beta:=\frac{1-\alpha}{1-\sqrt{1-\alpha}}$.
\end{lemma}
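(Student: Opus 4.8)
The plan is to derive the recursion for $G_i^{k+1} = \|\gbo_i^{k+1} - \vbo_i^{k+1}\|_F^2$ by mimicking the standard momentum-error-feedback contraction argument, but being careful to track the extra momentum-update drift. First I would write $\gbo_i^{k+1} - \vbo_i^{k+1} = \gbo_i^k + \mathcal{C}(\vbo_i^{k+1} - \gbo_i^k) - \vbo_i^{k+1} = \mathcal{C}(\vbo_i^{k+1} - \gbo_i^k) - (\vbo_i^{k+1} - \gbo_i^k)$ using the update rules \eqref{local-update-form-2}--\eqref{local-update-form-3}. Taking the conditional expectation over the compressor and applying Definition~\ref{compressor-definition} gives
\begin{align*}
\Ebb_{\mathcal{C}}[G_i^{k+1}] \le (1-\alpha)\|\vbo_i^{k+1} - \gbo_i^k\|_F^2.
\end{align*}
Then I would split $\vbo_i^{k+1} - \gbo_i^k = (\vbo_i^k - \gbo_i^k) + (\vbo_i^{k+1} - \vbo_i^k)$ and apply Young's inequality in the form $\|a+b\|^2 \le (1+s)\|a\|^2 + (1+1/s)\|b\|^2$ with a carefully chosen $s$; the standard choice $s = \frac{1-\sqrt{1-\alpha}}{\sqrt{1-\alpha}}$ makes $(1-\alpha)(1+s) = \sqrt{1-\alpha}(1 - (1-\sqrt{1-\alpha})) \cdot \frac{1}{\ldots}$ collapse to $1-\theta$ with $\theta = 1-\sqrt{1-\alpha}$, and makes $(1-\alpha)(1+1/s) = \frac{1-\alpha}{1-\sqrt{1-\alpha}} = \beta$. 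This yields
\begin{align*}
\Ebb_{\mathcal{C}}[G_i^{k+1}] \le (1-\theta)G_i^k + \beta \|\vbo_i^{k+1} - \vbo_i^k\|_F^2.
\end{align*}

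Next I would bound $\|\vbo_i^{k+1} - \vbo_i^k\|_F^2$. From \eqref{local-update-form-1}, $\vbo_i^{k+1} - \vbo_i^k = \eta(\nabla F(X^{k+1};\xi_i^{k+1}) - \vbo_i^k)$, so $\|\vbo_i^{k+1}-\vbo_i^k\|_F^2 = \eta^2\|\nabla F(X^{k+1};\xi_i^{k+1}) - \vbo_i^k\|_F^2$. I would decompose this into three pieces via $\nabla F(X^{k+1};\xi_i^{k+1}) - \vbo_i^k = (\nabla F(X^{k+1};\xi_i^{k+1}) - \nabla f_i(X^{k+1})) + (\nabla f_i(X^{k+1}) - \nabla f_i(X^k)) + (\nabla f_i(X^k) - \vbo_i^k)$. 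Taking $\Ebb_{\xi_i^{k+1}}[\cdot \mid \mathcal{F}_P^{k+1}]$, the first term is mean-zero and contributes at most $\sigma^2$ by Assumption~\ref{bounded-variance-assumption}; cross terms with it vanish. For the remaining two deterministic terms I would apply Young's inequality $\|a+b\|^2 \le 2\|a\|^2 + 2\|b\|^2$, then use Assumption~\ref{local-smoothness-assumption} ($L_i$-smoothness) to bound $\|\nabla f_i(X^{k+1}) - \nabla f_i(X^k)\|_F^2 \le L_i^2\|X^{k+1}-X^k\|_F^2$, and recognize $\|\nabla f_i(X^k) - \vbo_i^k\|_F^2 = P_i^k$. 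This gives $\Ebb[\|\vbo_i^{k+1}-\vbo_i^k\|_F^2] \le \eta^2(\sigma^2 + 2L_i^2\Ebb\|X^{k+1}-X^k\|_F^2 + 2\Ebb[P_i^k])$. Substituting and taking full expectation — after noting $(1-\alpha)\le 1$ so $\beta\eta^2\sigma^2$ can be loosened to $(1-\alpha)\eta^2\sigma^2$ is not quite it; rather I should keep $\beta\sigma^2\eta^2$ but the paper writes $(1-\alpha)\eta^2\sigma^2$, so I would instead route the variance term through the $(1-\alpha)$ bound directly by keeping $\|\vbo_i^{k+1}-\gbo_i^k\|^2$ intact for the $\sigma^2$ part — yields \eqref{G-1}. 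Summing over $i$ and using $\frac{1}{N}\sum L_i^2 = \tilde L^2$, together with $\frac{1}{N}\sum P_i^k = \tilde P^k$, gives \eqref{G-2}.

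The main obstacle I anticipate is the bookkeeping of the variance coefficient: a naive application of the split produces a $\beta\eta^2\sigma^2$ term, whereas the lemma claims the cleaner $(1-\alpha)\eta^2\sigma^2$. Resolving this requires being more surgical — keeping the stochastic fluctuation $\nabla F(X^{k+1};\xi_i^{k+1}) - \nabla f_i(X^{k+1})$ attached to the compressor contraction step so it only ever gets multiplied by $(1-\alpha)$ rather than by $\beta$, i.e. applying the conditional-expectation-over-$\xi$ \emph{before} rather than after the Young split. Getting the order of expectations and the choice of the Young parameter $s$ exactly right so that all three target coefficients ($1-\theta$, $2\beta\eta^2 L_i^2$, $2\beta\eta^2$, $(1-\alpha)\eta^2$) emerge simultaneously is the delicate part; everything else is a routine chain of Young and smoothness estimates.
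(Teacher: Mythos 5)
Your proposal is correct and follows essentially the same route as the paper: apply the compressor contraction first, then take the conditional expectation over $\xi_i^{k+1}$ \emph{before} the Young split so that the variance term picks up only the factor $(1-\alpha)\eta^2$, then apply Young's inequality with $\rho=\tfrac{1}{\sqrt{1-\alpha}}-1$ (your $s$) to get the $(1-\theta,\beta)$ coefficients, and finally split $\|\nabla f_i(X^{k+1})-\vbo_i^k\|_F^2\le 2L_i^2\|X^{k+1}-X^k\|_F^2+2P_i^k$. The subtlety you flagged about the ordering of the $\xi$-expectation and the Young step is exactly the point the paper's proof handles, and your resolution matches it.
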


\begin{proof}
For the first inequality \eqref{G-1},
\begin{align}
    \Ebb[G_i^{k+1}]=&~\Ebb\big[\|\gbo_i^{k+1}-\vbo_i^{k+1}\|_F^2\big]=\Ebb\big[\|\gbo_i^k+\cbo_i^k-\vbo_i^{k+1}\|_F^2\big]\notag\\
    =&~\Ebb\big[\|(\vbo_i^{k+1}-\gbo_i^k)-\mathcal{C}(\vbo_i^{k+1}-\gbo_i^k)\|_F^2\big]=\Ebb\Big[\Ebb_{\mathcal{C}}\big[\|(\vbo_i^{k+1}-\gbo_i^k)-\mathcal{C}(\vbo_i^{k+1}-\gbo_i^k)\|_F^2\big|\mathcal{F}_C^{k+1}\big]\Big]\notag\\
    \leq&~ (1-\alpha)\Ebb\big[\|\vbo_i^{k+1}-\gbo_i^k\|_F^2\big]=(1-\alpha)\Ebb\big[\|(1-\eta)\vbo_i^k+\eta\nabla F(X^{k+1};\xi_i^{k+1})-\gbo_i^k\|_F^2\big]\notag\\
    =&~(1-\alpha)\Ebb\Big[\big\|\vbo_i^k-\gbo_i^k+\eta\big(\nabla f_i(X^{k+1})-\vbo_i^k\big)+\eta\big(\nabla F(X^{k+1};\xi_i^{k+1})-\nabla f(X^{k+1}\big)\big\|_F^2\Big]\notag\\
    =&~(1-\alpha)\Ebb\bigg[\Ebb_{\xi_i^{k+1}}\Big[\big\|\vbo_i^k-\gbo_i^k+\eta\big(\nabla f_i(X^{k+1})-\vbo_i^k\big)+\eta\big(\nabla F(X^{k+1};\xi_i^{k+1})-\nabla f(X^{k+1}\big)\big\|_F^2\Big|\mathcal{F}_P^{k+1}\Big]\bigg]\notag\\
    \leq&~(1-\alpha)\Ebb\Big[\big\|\vbo_i^k-\gbo_i^k+\eta\big(\nabla f_i(X^{k+1})-\vbo_i^k\big)\big\|_F^2\Big]+(1-\alpha)\eta^2\sigma^2\notag\\
    \leq &~ (1-\alpha)(1+\rho)\Ebb\big[\|\gbo_i^k-\vbo_i^k\|_F^2\big]+(1-\alpha)(1+\frac{1}{\rho})\eta^2\Ebb\big[\|\nabla f_i(X^{k+1})-\vbo_i^k\|_F^2\big]+(1-\alpha)\eta^2\sigma^2,
\end{align}
where the first inequality is due the property of contractive compressors (Definition~\ref{compressor-definition}), the second inequality is by the bounded variance (Assumption \ref{bounded-variance-assumption}) and the third is the consequence of Young's inequality with parameter $\rho$. We should choose a proper $\rho$ satisfying $(1-\alpha)(1+\rho)<1$ and $(1-\alpha)(1+1/\rho)<+\infty$. \cite{richtarik_ef21_2021} provided a to some degree optimal choice $\rho=\frac{1}{\sqrt{1-\alpha}}-1$. Denoting $1-\theta=(1-\alpha)(1+\rho)=\sqrt{1-\alpha}$, and $\beta=(1-\alpha)(1+1/\rho)=\frac{1-\alpha}{1-\sqrt{1-\alpha}}$, we have 
\begin{align}
    \Ebb[G_i^{k+1}]\leq &~ (1-\theta)\Ebb\big[\|\gbo_i^k-\vbo_i^k\|_F^2\big]+\beta\eta^2\Ebb\big[\|\nabla f_i(X^{k+1})-\vbo_i^k\|_F^2\big]+(1-\alpha)\eta^2\sigma^2\notag\\
    \leq&~ (1-\theta)\Ebb\big[\|\gbo_i^k-\vbo_i^k\|_F^2\big]+2\beta\eta^2\Ebb[\|\nabla f_i(X^{k+1})-\nabla f_i(X^k)\|_F^2\big]\notag\\
    &~+2\beta\eta^2\Ebb\big[\|\nabla f_i(X^k)-\vbo_i^k\|_F^2\big]+(1-\alpha)\eta^2\sigma^2\notag\\
    \leq &~(1-\theta)\Ebb[G_i^k]+2\beta\eta^2L_i^2\Ebb\big[\|X^{k+1}-X^k\|_F^2\big]+2\beta\eta^2\Ebb[P_i^k]+(1-\alpha)\eta^2\sigma^2.
\end{align}

The second inequality \eqref{G-2} follows the same process as lemma \ref{lemma-P}. Lacking the unbiasedness assumption of the compressor, we have no similar inequality for master, which is fortunately unnecessary.
\end{proof}

Another lemma helps us split $\|X^{k+1}-X^k\|_F^2$ term to two orthogonal component used for convergence analysis.

\begin{lemma}
\label{split-lemma}
\begin{equation}
    \|X^{k+1}-X^k\|_F^2=\gamma^2\|\tilde{\Lambda}(X^k;\tilde{\gbo}^k)\|_F^2\leq \gamma^2\Big(\big\|\mathrm{skew}\big(\tilde{\gbo}^k (X^k)\tranT\big)\big\|_F^2+4\lambda^2(1+\epsilon)\mathcal{N}(X^k)\Big)
\end{equation}

\end{lemma}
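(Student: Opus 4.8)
The plan is to start from the master update in Algorithm~\ref{EF-Landing-algorithm}, namely $X^{k+1}=X^k-\gamma\tilde{\Lambda}(X^k;\tilde{\gbo}^k)$, which immediately yields the stated equality $\|X^{k+1}-X^k\|_F^2=\gamma^2\|\tilde{\Lambda}(X^k;\tilde{\gbo}^k)\|_F^2$ with no work. The only real content is the upper bound on $\|\tilde{\Lambda}(X^k;\tilde{\gbo}^k)\|_F^2$. Here I would invoke the orthogonality Proposition~\ref{orthogonal-proposition}: writing $\tilde{\Lambda}(X^k;\tilde{\gbo}^k)=\mathrm{grad}(\tilde{\gbo}^k)+\lambda\nabla\mathcal{N}(X^k)$ with $\mathrm{grad}(\tilde{\gbo}^k)=\mathrm{skew}(\tilde{\gbo}^k (X^k)\tranT)X^k$, and using $\langle\mathrm{grad}(\tilde{\gbo}^k),\nabla\mathcal{N}(X^k)\rangle=0$, the Pythagorean identity splits the squared norm exactly into $\|\mathrm{skew}(\tilde{\gbo}^k (X^k)\tranT)X^k\|_F^2+\lambda^2\|\nabla\mathcal{N}(X^k)\|_F^2$.

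Next I would bound the penalty term. Throughout the run $X^k$ stays in $\mathrm{St}(p,n)^\epsilon$ — this is the feasibility invariant guaranteed by the safe-step-size Lemma~\ref{uniform-safe-step-size-lemma} applied inductively (using that the clipped gradient satisfies $\|\tilde{\gbo}^k\|_F\le L'$ by \eqref{clip-update}) — so Lemma~\ref{singular-values-bound-lemma} gives $\sigma_1\le\sqrt{1+\epsilon}$ for the largest singular value of $X^k$. Then $\|\nabla\mathcal{N}(X^k)\|_F=\|X^k((X^k)\tranT X^k-I_p)\|_F\le\sigma_1\|(X^k)\tranT X^k-I_p\|_F\le\sqrt{1+\epsilon}\,\|(X^k)\tranT X^k-I_p\|_F$; squaring and using the identity $\mathcal{N}(X)=\tfrac14\|X\tranT X-I_p\|_F^2$ gives $\lambda^2\|\nabla\mathcal{N}(X^k)\|_F^2\le 4\lambda^2(1+\epsilon)\mathcal{N}(X^k)$, which is precisely the second term on the right-hand side. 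The Riemannian term requires no further manipulation beyond, if the displayed form is preferred over $\|\mathrm{skew}(\tilde{\gbo}^k (X^k)\tranT)X^k\|_F^2$, the submultiplicative estimate $\|\mathrm{skew}(\tilde{\gbo}^k (X^k)\tranT)X^k\|_F\le\sigma_1\|\mathrm{skew}(\tilde{\gbo}^k (X^k)\tranT)\|_F$; adding the two bounds completes the lemma.

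There is no genuine obstacle here — the statement is a short structural step — so the thing to be careful about is the order of dependencies: the singular-value estimates are legitimate only because $X^k\in\mathrm{St}(p,n)^\epsilon$, which itself relies on Lemma~\ref{uniform-safe-step-size-lemma}, so this lemma is meant to be used alongside that invariant rather than in isolation. A secondary point worth flagging is that it is the clipped iterate $\tilde{\gbo}^k$, not the raw accumulator $\gbo^k$, that enters $\tilde{\Lambda}$ and the bound $\|\tilde{\gbo}^k\|_F\le L'$; the inequality of the lemma itself, however, holds verbatim for an arbitrary matrix in place of $\tilde{\gbo}^k$, so this distinction matters only for the surrounding convergence analysis, not for the proof of the lemma.
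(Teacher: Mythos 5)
Your proposal is correct and follows exactly the paper's (one-line) argument: Pythagoras via Proposition~\ref{orthogonal-proposition} to split $\|\tilde{\Lambda}(X^k;\tilde{\gbo}^k)\|_F^2$ into $\|\mathrm{skew}(\tilde{\gbo}^k(X^k)\tranT)X^k\|_F^2+\lambda^2\|\nabla\mathcal{N}(X^k)\|_F^2$, then the singular-value bound of Lemma~\ref{singular-values-bound-lemma} to get $\lambda^2\|\nabla\mathcal{N}(X^k)\|_F^2\le 4\lambda^2(1+\epsilon)\mathcal{N}(X^k)$. Your side remark on the first term is well taken --- the way the lemma is invoked in Step~3 of Theorem~\ref{general-convergence-theorem} (where it produces $S^k=\|\mathrm{skew}(\tilde{\gbo}^k(X^k)\tranT)X^k\|_F^2$) confirms the displayed $\|\mathrm{skew}(\tilde{\gbo}^k(X^k)\tranT)\|_F^2$ is a typo for the version with the trailing $X^k$; note only that your submultiplicative estimate would go the other way (it costs a factor $\sigma_1^2\le 1+\epsilon>1$), so the equality form of the decomposition is the one to keep.
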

\begin{proof}
    This lemma can be easily proved by the orthogonality between two terms of $\tilde{\Lambda}(X^k;\tilde{\gbo}^k)$ and the sigular values bound of $X^k$ (Lemma \ref{singular-values-bound-lemma}).
\end{proof}
Now we are able to prove the main theorem.

\begin{proof}
\textbf{STEP 1. Dividing different Errors.}
Using smoothness of $m(X^k)$ mentioned in Section \ref{Landing-preliminary-section} and Lemma \ref{merit-function-bound}, we have:
\begin{align}
    m(X^{k+1})\leq &~ m(X^k)+\langle \nabla m(X^k),X^{k+1}-X^k\rangle +\frac{L_m}{2}\|X^{k+1}-X^k\|_F^2\notag\\
    =&~m(X^k)-\gamma\langle \nabla m(X^k), \tilde{\Lambda}(X^k;\tilde{\gbo}^k)\rangle +\frac{L_m\gamma^2}{2}\|\tilde{\Lambda}(X^k;\tilde{\gbo}^k)\|_F^2\notag\\
    \leq&~ m(X^k)-\frac{\gamma}{4}\|\mathrm{grad}f(X^k)\|_F^2 - \frac{\gamma}{4}\big\|\mathrm{skew}\big(\tilde{\gbo}^k(X^k)\tranT\big)X^k\big\|_F^2+\gamma\|\tilde{\gbo}^k-\nabla f(X^k)\|_F^2\notag\\
    &~-\gamma\lambda\mu\mathcal{N}(X^k)+\frac{L_m\gamma^2}{2}\|\tilde{\Lambda}(X^k;\tilde{\gbo}^k)\|_F^2.\label{m-descend}
\end{align}

Denote $\displaystyle{\mathbb{S}:=\big\{X\in\RR{n\times p}\big|\|X\|_F\leq L\big\}}$, then we have $\tilde{\gbo}^k=\mathrm{proj}_{\mathbb{S}}(\gbo^k)$, and $\nabla f(X^k)=\mathrm{proj}_{\mathbb{S}}(\nabla f(X^k))$. Therefore, using the property of projections,
\begin{align}
    \|\tilde{\gbo}^k-\nabla f(X^k)\|_F^2\leq \|\gbo^k-\nabla f(X^k)\|_F^2\label{projection-inequality}
\end{align}

Next, we need to separete the momentum and stochastic error as well as compression error. We first introduce an inequality $\|\gbo^k-\nabla f(X^k)\|_F^2\leq c_1\|\gbo^k-\vbo^k\|_F^2+ c_2\|\vbo^k-\nabla f(X^k)\|_F^2$, where two undetermined constants are specified in three cases:
\begin{itemize}
    \item When the momentum rate $\eta=1$ and the variance of gradients $\sigma^2=0$, the momentum and stochastic error will be $0$, so $c_1,c_2$ are specified as $c_1=1,c_2=0$, which is trivial.
    \item When the contractive factor $\alpha=1$, the compression error will be $0$, so $c_1, c_2$ are specified as $c_1=0, c_2=1$, which is trivial too.
    \item Otherwise, we specify $c_1=c_2=2$, which is the consequence of mean value inequality. 
\end{itemize}
Therefore, 
\begin{align}
    \|\gbo^k-\nabla f(X^k)\|_F^2\leq &~ c_1\|\gbo^k-\vbo^k\|_F^2+c_2\|\vbo^k-\nabla f(X^k)\|_F^2\notag\\
    \leq &~ \frac{c_1}{N}\sum_{i=1}^N \|\gbo_i^k-\vbo_i^k\|_F^2+c_2\|\vbo^k-\nabla f(X^k)\|_F^2=c_1\tilde{G}^k+c_2 P^k,\label{G-tilde-and-P}
\end{align}
where the second inequality is due to Jensen's inequality.

For convenience, we denote $S^k=\|\mathrm{skew}(\tilde{\gbo}^k(X^k)\tranT)X^k\|_F^2$, whose influence will be eliminated later by choosing a proper step size. Substituting \eqref{G-tilde-and-P} into \eqref{m-descend} after using \eqref{projection-inequality} , we have:
\begin{align}
    m(X^{k+1})\leq&~ m(X^k)-\frac{\gamma}{4}\|\mathrm{grad}f(X^k)\|_F^2-\frac{\gamma}{4}S^k+\gamma c_1\tilde{G}^k+\gamma c_2 P^k-\gamma\lambda\mu\mathcal{N}(X^k)+\frac{L_m\gamma^2}{2}\|\tilde{\Lambda}(X^k;\tilde{\gbo}^k)\|_F^2\label{ineq-different-errors}
\end{align}

\textbf{STEP 2. Constructing Lyapunov Function.} Adding $\frac{c_1\gamma}{\theta}G^{k+1}$ to both sides of \ref{ineq-different-errors}, taking expectation and using lemma \ref{lemma-G}, we have:
\begin{align}
    &~\Ebb[m(X^{k+1})]+\frac{c_1 \gamma}{\theta}\Ebb[\tilde{G}^{k+1}]\notag\\
    \leq&~ \Ebb[m(X^k)]-\frac{\gamma}{4}\Ebb[\|\mathrm{grad}f(X^k)\|_F^2]-\frac{\gamma}{4}\Ebb[S^k]\notag\\
    &+\frac{c_1\gamma}{\theta}\Big(\Ebb[\tilde{G}^k]+2\beta\eta^2\tilde{L}^2\Ebb[\|X^{k+1}-X^k\|_F^2]+2\beta\eta^2\Ebb[\tilde{P}^k]+(1-\alpha)\eta^2\sigma^2\Big)\notag\\
    &+c_2\gamma \Ebb[P^k]-\gamma\lambda\mu\Ebb[\mathcal{N}(X^k)]+\frac{L_m\gamma^2}{2}\Ebb[\|\tilde{\Lambda}(X^k;\tilde{\gbo}^k)\|_F^2]\notag\\
    =&~\Ebb[m(X^k)]+\frac{c_1\gamma}{\theta}\Ebb[\tilde{G}^k]\notag\\
    &-\frac{\gamma}{4}\Ebb[\|\mathrm{grad}f(X^k)\|_F^2]-\frac{\gamma}{4}\Ebb[S^k]-\gamma\lambda\mu\Ebb[\mathcal{N}(X^k)]\notag\\
    &+\Big(\frac{2c_1\gamma^3\eta^2\beta\tilde{L}^2}{\theta}+\frac{L_m\gamma^2}{2}\Big)\Ebb[\|\tilde{\Lambda}(X^k;\tilde{\gbo}^k)\|_F^2]\notag\\
    &+\frac{2c_1\gamma\eta^2\beta}{\theta}\Ebb[\tilde{P}^k]+c_2\gamma\Ebb[P^k]+\frac{c_1\gamma \eta^2(1-\alpha)\sigma^2}{\theta}.
\end{align}
Further adding $\frac{2c_1\gamma\eta\beta}{\theta}\tilde{P}^{k+1}$, $\frac{c_2\gamma}{\eta}P^{k+1}$ to both sides, subtracting lower bound $m^*$ by Assumption \ref{lower-bound-assumption}, and using lemma \ref{lemma-P}, we have the following collated inequality
\begin{align}
    &~\Ebb[m(X^{k+1})]-m^*+\frac{c_1\gamma}{\theta}\Ebb[\tilde{G}^{k+1}]+\frac{2c_1\gamma\eta\beta}{\theta}\Ebb[\tilde{P}^{k+1}]+\frac{c_2\gamma}{\eta}\Ebb[P^{k+1}]\notag\\
    \leq&~ \Ebb[m(X^{k})]-m^*+\frac{c_1\gamma}{\theta}\Ebb[\tilde{G}^{k}]+\frac{2c_1\gamma\eta\beta}{\theta}\Ebb[\tilde{P}^{k}]+\frac{c_2\gamma}{\eta}\Ebb[P^{k}]\notag\\
    &-\frac{\gamma}{4}\Ebb[\|\mathrm{grad}f(X^k)\|_F^2]-\frac{\gamma}{4}\Ebb[S^k]-\gamma\lambda\mu\Ebb[\mathcal{N}(X^k)]\notag\\
    &+\Big(\frac{2c_1\beta\tilde{L}^2}{\theta}(1-\eta+\eta^3)\gamma^3+\frac{c_2L^2}{\eta^2}(1-\eta)^2(1+\eta)\gamma^3+\frac{L_m\gamma^2}{2}\Big)\Ebb[\|\tilde{\Lambda}(X^k;\tilde{\gbo}^k)\|_F^2]\notag\\
    &+\frac{c_1\gamma\eta^2(1-\alpha)\sigma^2}{\theta}+\frac{2c_1\gamma\eta^3\beta\sigma^2}{\theta}+\frac{c_2\gamma\eta\sigma^2}{N},
\end{align}
which corresponds to the Lyapunov function we defined as \eqref{Lyapunov-function}. 

\textbf{STEP 3. Choosing Step Size.} Next, we split $\|\tilde{\Lambda}(X^k;\tilde{\gbo}^k)\|_F^2$ using lemma \ref{split-lemma}, which splits $\Ebb[\|\tilde{\Lambda}(X^k;\tilde{\gbo}^k)\|_F^2]$ into $\Ebb[S^k]$ and $\Ebb[\mathcal{N}(X^k)]$. To eliminate the influence of $S^k$ and to make the coefficient before $\mathcal{N}(X^k)$ negative enough, we are looking for a sufficiently small step size to make the coefficients before $\Ebb[S^k]$ and $\Ebb[\mathcal{N}(X^k)]$ satisfy
\begin{align}
    -\frac{\gamma}{4}+\frac{2c_1\beta\tilde{L}^2}{\theta}(1-\eta+\eta^3)\gamma^3+\frac{c_2L^2}{\eta^2}(1-\eta)^2(1+\eta)\gamma^3+\frac{L_m\gamma^2}{2}\leq 0
\end{align}
and 
\begin{align}
    -\gamma\lambda\mu +4\lambda^2(1+\epsilon)\Big(\frac{2c_1\beta\tilde{L}^2}{\theta}(1-\eta+\eta^3)\gamma^3+\frac{c_2L^2}{\eta^2}(1-\eta)^2(1+\eta)\gamma^3+\frac{L_m\gamma^2}{2}\Big)\leq -\frac{\gamma\lambda\mu}{2}.
\end{align}
It's reducible to a quadratic inequality problem. suppose $a,b>0$, if we choose $0\leq x\leq (\sqrt{a}+b)^{-1}$, the inequality $ax^2+bx\leq 1$ holds. So it's enough to choose step size $\gamma$ as 
\begin{align*}
    \gamma =\min\{\gamma_s,\gamma_1,\gamma_2\},
\end{align*}
where $\gamma_s$ is the uniform safe step size defined by Lemma \ref{uniform-safe-step-size-lemma}, and $\gamma_1,\gamma_2$ are defined as
\begin{align*}
    \gamma_1:=& \left(2\sqrt{a}+2L_m\right)^{-1}\\
    \gamma_2:=& \bigg(2\sqrt{\frac{2\lambda^2(1+\epsilon)a}{\mu}}+\frac{4\lambda^2(1+\epsilon)L_m}{\mu}\bigg)^{-1},
\end{align*}
where \begin{equation*}
a=\frac{2c_1\beta\tilde{L}^2}{\theta}(1-\eta+\eta^3)+\frac{c_2L^2}{\eta^2}(1-\eta)^2(1+\eta).
\end{equation*}

With a small enough step size $\gamma$, we have
\begin{align}
    \Ebb[\mathcal{L}^{k+1}]\leq \Ebb[\mathcal{L}^k]-\frac{\gamma}{4}\Ebb[\|\mathrm{grad}f(X^k)\|_F^2]-\frac{\gamma\lambda\mu}{2}\Ebb[\mathcal{N}(X^k)]+\frac{c_1\gamma\eta^2(1-\alpha)\sigma^2}{\theta}+\frac{2c_1\gamma\eta^3\beta\sigma^2}{\theta}+\frac{c_2\gamma\eta\sigma^2}{N}
\end{align}

\textbf{STEP 4. Summing Up.} Summing up all inequalities from $k=0$ to $K-1$ and dividing by $K$, we have
\begin{align}
    \frac{1}{4K}\sum_{k=0}^{K-1}\Ebb[\|\mathrm{grad}f(X^k)\|_F^2]+\frac{\lambda\mu}{2K}\sum_{k=0}^{K-1}\Ebb[\mathcal{N}(X^k)]
    \leq \frac{\mathcal{L}^0}{\gamma K} +\frac{c_1\eta^2(1-\alpha)\sigma^2}{\theta}+\frac{2c_1\eta^3\beta\sigma^2}{\theta}+\frac{c_2\eta\sigma^2}{N},
\end{align}

\end{proof}

\subsection{Details of Theorem~\ref{thm-ef-landing-deterministic} and Corollary~\ref{cor-landing-deterministic}}\label{details-two-deterministic-appendix}
\textbf{1. Theorem~\ref{thm-ef-landing-deterministic}.} When $\sigma=0,\,\eta=1,\,c_1=1$ and $c_2=0$, the momentum and stochastic error $\tilde{P}^k=0$, so the Lyapunov function defined in \eqref{Lyapunov-function} can be simplified as
\begin{equation}
    \mathcal{L}^k=m(X^k)-m^*+\frac{\gamma}{\theta}\tilde{G}^k.
\end{equation}

According to the rule of choosing step size in Appendix~\ref{general-convergence-proof-appendix}, the step size should be chosen as
\begin{align*}
    \gamma =\min\{\gamma_s,\gamma_1,\gamma_2\},
\end{align*}
where $\gamma_s$ is the uniform safe step size defined by Lemma \ref{uniform-safe-step-size-lemma}, and $\gamma_1,\gamma_2$ are defined as
\begin{align*}
    \gamma_1:=& \left(2\sqrt{a}+2L_m\right)^{-1}\\
    \gamma_2:=& \bigg(2\sqrt{\frac{2\lambda^2(1+\epsilon)a}{\mu}}+\frac{4\lambda^2(1+\epsilon)L_m}{\mu}\bigg)^{-1},
\end{align*}
where \begin{equation*}
a=\frac{2\beta\tilde{L}^2}{\theta}.
\end{equation*}

\textbf{2. Theorem~\ref{cor-landing-deterministic}.} When $\alpha=1$, the compression error $\tilde{G}^k$ will be $0$, so the Lyapunov function can be further simplified as \(\mathcal{L}^k=m(X^k)-m^*\). And the choice of the step size can also be simplified as
\begin{equation*}
    \gamma=\min\left\{\gamma_s, \frac{1}{2L_m}, \frac{\mu}{4\lambda^2(1+\epsilon)L_m}\right\}
\end{equation*}.

\subsection{An adequate Selection of Step Size with momentum}
Before deriving conclusions about convergence with momentum, we give a lemma about the step size. This lemma gives an adequate but not necessary choice for step size, for the convenience of analyzing the influence of momentum.
\begin{lemma}[An adequate Selection of Step Size with momentum]
    \label{adequate-step-size-lemma}
    For the step size defined in Theorem \ref{general-convergence-theorem}, fix momentum rate $\eta\in(0,1)$. If we further choose $\gamma=\min\{\gamma_s, \frac{1}{6\tilde{L}}\sqrt{\frac{\theta}{2c_1\beta}}, \frac{\eta}{6L\sqrt{2c_2}}, \frac{1}{6L_m},  \frac{1}{12\tilde{L}}\sqrt{\frac{\mu\theta}{c_1\lambda^2(1+\epsilon)\beta}}$, $
    \frac{\eta}{12L}\sqrt{\frac{\mu}{c_2\lambda^2(1+\epsilon)}}, \frac{\mu}{12\lambda^2(1+\epsilon)L_m}\}$, then the step size satisfies $\gamma\leq\min\{\gamma_s,\gamma_1,\gamma_2\}$ defined in Theorem \ref{general-convergence-theorem}.
\end{lemma}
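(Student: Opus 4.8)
The plan is to verify the two nontrivial inequalities $\gamma\le\gamma_1$ and $\gamma\le\gamma_2$ from Theorem~\ref{general-convergence-theorem}; the bound $\gamma\le\gamma_s$ is immediate since $\gamma_s$ is one of the arguments of the minimum defining $\gamma$. Recall $\gamma_1=(2\sqrt a+2L_m)^{-1}$ and $\gamma_2=\big(2\sqrt{2\lambda^2(1+\epsilon)a/\mu}+4\lambda^2(1+\epsilon)L_m/\mu\big)^{-1}$ with $a=\frac{2c_1\beta\tilde L^2}{\theta}(1-\eta+\eta^3)+\frac{c_2 L^2}{\eta^2}(1-\eta)^2(1+\eta)$, so it suffices to show $\gamma(2\sqrt a+2L_m)\le 1$ and $\gamma\big(2\sqrt{2\lambda^2(1+\epsilon)a/\mu}+4\lambda^2(1+\epsilon)L_m/\mu\big)\le 1$.

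First I would bound $a$ uniformly in $\eta$. For $\eta\in(0,1)$ one has $0\le 1-\eta+\eta^3\le 1$ (because $\eta^3\le\eta$) and $0\le(1-\eta)^2(1+\eta)\le 1$ (expand to $1-\eta-\eta^2+\eta^3$, which decreases on $[0,1]$ from $1$ to $0$), hence $a\le\frac{2c_1\beta\tilde L^2}{\theta}+\frac{c_2 L^2}{\eta^2}$. By subadditivity of the square root,
\[
\sqrt a\ \le\ \tilde L\sqrt{\tfrac{2c_1\beta}{\theta}}+\tfrac{L\sqrt{c_2}}{\eta}.
\]

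For the $\gamma_1$ inequality I would write $2\gamma\sqrt a+2\gamma L_m$ as the sum of the three terms $2\gamma\tilde L\sqrt{2c_1\beta/\theta}$, $2\gamma L\sqrt{c_2}/\eta$, and $2\gamma L_m$, and bound each one using, respectively, the entries $\gamma\le\frac{1}{6\tilde L}\sqrt{\theta/(2c_1\beta)}$, $\gamma\le\frac{\eta}{6L\sqrt{2c_2}}$, and $\gamma\le\frac{1}{6L_m}$ of the minimum; each term is then at most $\frac13$, so the sum is at most $1$. The $\gamma_2$ inequality is handled the same way: substituting the bound on $\sqrt a$ and pulling out $\sqrt{2\lambda^2(1+\epsilon)/\mu}=\lambda\sqrt{2(1+\epsilon)/\mu}$, the left-hand side splits into $4\gamma\tilde L\lambda\sqrt{(1+\epsilon)c_1\beta/(\mu\theta)}$, $2\sqrt2\,\gamma\lambda L\sqrt{(1+\epsilon)c_2/\mu}/\eta$, and $4\gamma\lambda^2(1+\epsilon)L_m/\mu$, which are controlled by the last three entries of the minimum and are at most $\frac13$, $\frac{\sqrt2}{6}$, $\frac13$ respectively, for a total of $\frac23+\frac{\sqrt2}{6}<1$.

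I do not anticipate a real obstacle: the lemma is an intentionally loose (``adequate but not necessary'') step-size choice, so the argument is elementary constant bookkeeping, with the only mild care being the organization of the nested square roots when $\sqrt{2\lambda^2(1+\epsilon)/\mu}$ multiplies $\sqrt a$. In the degenerate cases $c_1=0$ (resp.\ $c_2=0$) the corresponding term of $a$ vanishes and the associated entries of the minimum are simply omitted, so those cases follow a fortiori.
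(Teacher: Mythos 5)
Your proposal is correct and follows essentially the same route as the paper's proof: bound $a$ uniformly in $\eta$, use $\sqrt{x+y}\le\sqrt{x}+\sqrt{y}$, and control each of the three resulting summands of $\gamma(2\sqrt{a}+2L_m)$ (resp.\ the $\gamma_2$ expression) by its corresponding entry of the minimum so the total is at most $1$. The only difference is that you use the sharper bound $(1-\eta)^2(1+\eta)\le 1$ where the paper uses $\le 2$, which merely makes some of your intermediate terms ($\tfrac{1}{3\sqrt{2}}$, $\tfrac{\sqrt{2}}{6}$) smaller than the paper's uniform $\tfrac13$ and changes nothing in substance.
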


\begin{proof}
The rule of choosing step size in Appendix~\ref{general-convergence-proof-appendix} is 
\begin{align*}
    \gamma =\min\{\gamma_s,\gamma_1,\gamma_2\},
\end{align*}
where $\gamma_s$ is the uniform safe step size defined by Lemma \ref{uniform-safe-step-size-lemma}, and $\gamma_1,\gamma_2$ are defined as
\begin{align*}
    \gamma_1:=& \left(2\sqrt{a}+2L_m\right)^{-1}\\
    \gamma_2:=& \bigg(2\sqrt{\frac{2\lambda^2(1+\epsilon)a}{\mu}}+\frac{4\lambda^2(1+\epsilon)L_m}{\mu}\bigg)^{-1},
\end{align*}
where \begin{equation*}
a=\frac{2c_1\beta\tilde{L}^2}{\theta}(1-\eta+\eta^3)+\frac{c_2L^2}{\eta^2}(1-\eta)^2(1+\eta).
\end{equation*}

We can bound $\sqrt{a}$ as
\begin{align}
    \sqrt{a}=\sqrt{\frac{2c_1\beta\tilde{L}^2}{\theta}(1-\eta+\eta^3)+\frac{c_2L^2}{\eta^2}(1-\eta)^2(1+\eta)}
    \leq \sqrt{\frac{2c_1\beta\tilde{L}^2}{\theta}+\frac{2c_2L^2}{\eta^2}}\leq \tilde{L}\sqrt{\frac{2c_1\beta}{\theta}}+\frac{L}{\eta}\sqrt{2c_2},
\end{align}
where the last inequality is due to $\sqrt{x+y}\leq\sqrt{x}+\sqrt{y},\,x,y\geq 0$.

Therefore, 
\begin{align}
    \gamma_1=(2\sqrt{a}+2L_m)^{-1}\geq \Big(2\tilde{L}\sqrt{\frac{2c_1\beta}{\theta}}+2\frac{L}{\eta}\sqrt{2c_2}+2L_m\Big)^{-1}.
\end{align}
For $(x+y+z)^{-1},\,x,y,z>0$, if we denote $M=\max\{x,y,z\}$, then $\frac{1}{M}=\min\{\frac{1}{x},\frac{1}{y},\frac{1}{z}\}$. So $(x+y+z)^{-1}\geq \frac{1}{3M}=\frac{1}{3}\min\{\frac{1}{x},\frac{1}{y},\frac{1}{z}\}$. Hence, if we choose $\gamma\leq \min\{\frac{1}{6\tilde{L}}\sqrt{\frac{\theta}{2c_1\beta}},\frac{\eta}{6L\sqrt{2c_2}},\frac{1}{6L_m}\}$, then it satisfies $\gamma\leq\gamma_1$.

The same reasoning leads to 
\begin{align}
    \gamma_2=\Big(2\sqrt{\frac{2\lambda^2(1+\epsilon)a}{\mu}}+\frac{4\lambda^2(1+\epsilon)L_m}{\mu}\Big)^{-1}\geq \Big(4\tilde{L}\sqrt{\frac{c_1\lambda^2(1+\epsilon)\beta}{\mu\theta}}+4\frac{L}{\eta}\sqrt{\frac{c_2\lambda^2(1+\epsilon)}{\mu}}+\frac{4\lambda^2(1+\epsilon)L_m}{\mu}\Big)^{-1}.
\end{align}
Choosing $\gamma\leq\min\{\frac{1}{12\tilde{L}}\sqrt{\frac{\mu\theta}{c_1\lambda^2(1+\epsilon)\beta}},\frac{\eta}{12L}\sqrt{\frac{\mu}{c_2\lambda^2(1+\epsilon)}},\frac{\mu}{12\lambda^2(1+\epsilon)L_m}\}$ satisfies $\gamma\leq\gamma_2$. And the conclusion follows.

\end{proof}

\subsection{Proof of Theorem \ref{most-general-ef-landing-theorem}}\label{most-general-ef-landing-proof-appendix}
\begin{proof}
When $c_1=c_2=2$, the Lyapunov function is
\begin{align}
    \mathcal{L}^k=m(X^k)-m^*+\frac{2\gamma}{\theta}\tilde{G}^k+\frac{4\gamma\eta\beta}{\theta}\tilde{P}^k+\frac{2\gamma}{\eta}P^k.
\end{align}
Choosing step size $\gamma$ as Lemma \ref{adequate-step-size-lemma} and setting $c_1=c_2=2$ leads to 
\begin{align}
    \frac{1}{\gamma}\leq
    \Big(\frac{1}{\gamma_s}+6L_m+\frac{12\lambda^2(1+\epsilon)L_m}{\mu}+12\tilde{L}\sqrt{\frac{\beta}{\theta}}+12\tilde{L}\sqrt{\frac{2\lambda^2(1+\epsilon)\beta}{\mu\theta}}\Big)
    +\Big(12L+12L\sqrt{\frac{2\lambda^2(1+\epsilon)}{\mu}}\Big)\cdot\frac{1}{\eta}.
\end{align}
We denote $C^\gamma_1=\frac{1}{\gamma_s}+6L_m+\frac{12\lambda^2(1+\epsilon)L_m}{\mu}+12\tilde{L}\sqrt{\frac{\beta}{\theta}}+12\tilde{L}\sqrt{\frac{2\lambda^2(1+\epsilon)\beta}{\mu\theta}}$ and $C^\gamma_2=12L+12L\sqrt{\frac{2\lambda^2(1+\epsilon)}{\mu}}$ for convenience. Using Theorem \ref{general-convergence-theorem}, we have
\begin{align}
    \frac{1}{4K}\sum_{k=0}^{K-1}\Ebb[\|\mathrm{grad}f(X^k)\|_F^2]+\frac{\lambda\mu}{2K}\sum_{k=0}^{K-1}\Ebb[\mathcal{N}(X^k)]\leq
    \frac{C^\gamma_1\mathcal{L}^0}{K}+\frac{C^\gamma_2\mathcal{L}^0}{\eta K}+\frac{2\eta^2(1-\alpha)\sigma^2}{\theta}+\frac{4\eta^3\beta\sigma^2}{\theta}+\frac{2\eta\sigma^2}{N}\label{79}.
\end{align}
We choose $\eta=\min\{(\frac{C^\gamma_2 N \mathcal{L}^0}{2\sigma^2 K})^{1/2}, (\frac{\theta C^\gamma_2 \mathcal{L}^0}{2(1-\alpha)\sigma^2 K})^{1/3}, (\frac{\theta C^\gamma_2 \mathcal{L}^0}{4\beta\sigma^2 K})^{1/4}\}$, so that $\frac{C^\gamma_2\mathcal{L}^0}{\eta K}\geq \frac{2\eta \sigma^2}{N}$, $\frac{C^\gamma_2\mathcal{L}^0}{\eta K}\geq \frac{2\eta^2(1-\alpha)\sigma^2}{\theta}$ and $\frac{C^\gamma_2\mathcal{L}^0}{\eta K}\geq \frac{4\eta^3\beta\sigma^2}{\theta}$, and at least one inequality takes equal. Hence, \eqref{79} can be bounded as
\begin{align}
    &\frac{1}{4K}\sum_{k=0}^{K-1}\Ebb[\|\mathrm{grad}f(X^k)\|_F^2]+\frac{\lambda\mu}{2K}\sum_{k=0}^{K-1}\Ebb[\mathcal{N}(X^k)]\leq
    \frac{C^\gamma_1\mathcal{L}^0}{K}+4\frac{C^\gamma_2\mathcal{L}^0}{\eta K}\notag\\
    \leq &~ \frac{C^\gamma_1\mathcal{L}^0}{K}+4\Big(\frac{2\eta\sigma^2}{N}+\frac{2\eta^2(1-\alpha)\sigma^2}{\theta}+\frac{4\eta^3\beta\sigma^2}{\theta}\Big)\notag\\
    \leq&~\frac{C^\gamma_1 \mathcal{L}^0}{K}+4\Big(\frac{2\sigma^2 C^\gamma_2 \mathcal{L}^0}{NK}\Big)^{\frac{1}{2}}+4\Big(\frac{2\sigma^2(1-\alpha)}{\theta}\Big)^{\frac{1}{3}}\Big(\frac{C^\gamma_2\mathcal{L}^0}{K}\Big)^{\frac{2}{3}}+4\Big(\frac{4\beta\sigma^2}{\theta}\Big)^{\frac{1}{4}}\Big(\frac{C^\gamma_2\mathcal{L}^0}{K}\Big)^{\frac{3}{4}},
\end{align}
which leads to the conclusion.
\end{proof}

\newpage
\subsection{Proof of Theorem \ref{stochastic-landing-theorem}}\label{stochastic-landing-proof-appendix}
\begin{proof}
When $c_1=0,c_2=1$, the Lyapunov function can be simplified as
\begin{align}
    \mathcal{L}^k=m(X^k)-m^*+\frac{\gamma}{\eta}P^k.
\end{align}
And Theorem \ref{general-convergence-theorem} leads to
\begin{align}
    \frac{1}{4K}\sum_{k=0}^{K-1}\Ebb[\|\mathrm{grad}f(X^k)\|_F^2]+\frac{\lambda\mu}{2K}\sum_{k=0}^{K-1}\Ebb[\mathcal{N}(X^k)]\leq \frac{\mathcal{L}^0}{\gamma K}+\frac{\eta \sigma^2}{N}\label{74}.
\end{align}
Choosing step size $\gamma$ as Lemma \ref{adequate-step-size-lemma} and setting $c_1=0,c_2=1$ leads to 
\begin{align}
    \frac{1}{\gamma}\leq \Big(\frac{1}{\gamma_s}+6L_m+\frac{12\lambda^2(1+\epsilon)L_m}{\mu}\Big)+\Big(6\sqrt{2}L+12L\sqrt{\frac{\lambda^2(1+\epsilon)}{\mu}}\Big)\cdot\frac{1}{\eta}.
\end{align}
We denote $C^\gamma_1=\frac{1}{\gamma_s}+6L_m+\frac{12\lambda^2(1+\epsilon)L_m}{\mu}$ and $C^\gamma_2=6\sqrt{2}L+12L\sqrt{\frac{\lambda^2(1+\epsilon)}{\mu}}$ for convenience, and \eqref{74} leads to 
\begin{align}
    \frac{1}{4K}\sum_{k=0}^{K-1}\Ebb[\|\mathrm{grad}f(X^k)\|_F^2]+\frac{\lambda\mu}{2K}\sum_{k=0}^{K-1}\Ebb[\mathcal{N}(X^k)]
    \leq \frac{C^\gamma_1 \mathcal{L}^0}{K}+\frac{C^\gamma_2\mathcal{L}^0}{\eta K}+\frac{\eta\sigma^2}{N}.
\end{align}
Choosing $\eta=\sqrt{\frac{C^\gamma_2 N \mathcal{L}^0}{\sigma^2 K}}$ leads to the conclusion.

\end{proof}

\section{Details of Optimization on Block-wise Manifolds}
In this section, we provide a block-wise version of EF-Landing algorithm. And we further prove the convergence result for stochastic scenarios with compression. Other Scenarios can be deduced similarly.
\subsection{Algorithm for Block-wise Problems}\label{blockwise-algorithm-appendix}
\begin{algorithm}[H]
\caption{Block-wise EF-Landing}
\label{blockwise-EF-Landing-algorithm}
\begin{algorithmic}[1]
\REQUIRE starting point $\mathfrak{X}^0\in\mathfrak{R}$; gradient bound $L'$; step size $\gamma>0$; compressor $\mathcal{C}$; momentum $\eta\in(0,1]$; \vspace{1mm}
\STATE Each node initializes $\mathfrak{v}_i^0=\nabla F(\mathfrak{X}^0;\xi_i^{0})$, $\mathfrak{g}_i^0=\mathcal{C}(\mathfrak{v}_i^0)$ for $i=1,\dots,N$; Master initializes $\mathfrak{g}^0=\frac{1}{N}\sum_{i=1}^N \mathfrak{g}_i^0$.\vspace{0.1mm}
\FOR{$k=0,1,\dots,K-1$ }
    \STATE Master clips the gradient \(\tilde{\mathfrak{g}}^k =\min\{1,L'/\|\mathfrak{g}^k\|\}\mathfrak{g}^k\);
    \STATE Master computes $\tilde{\Lambda}(\mathfrak{X}^k;\tilde{\mathfrak{g}}^k)$ using \eqref{eq-genearlized-descent};
    \STATE Master computes $\mathfrak{X}^{k+1}=\mathfrak{X}^{k}-\gamma \tilde{\Lambda}(\mathfrak{X}^k;\tilde{\mathfrak{g}}^k)$ and broadcasts $\mathfrak{X}^{k+1}$ to all nodes.
    \FOR{ all nodes $i=1,\dots,N$ in parallel}
        \STATE Compute momentum $\mathfrak{v}_i^{k+1}=(1-\eta)\mathfrak{v}_k^k+\eta \nabla F(\mathfrak{X}^{k+1};\xi_i^{k+1})$;
        \STATE Compress $\mathfrak{c}_i^k$ via $\mathfrak{c}_i^k=\mathcal{C}(\mathfrak{v}_i^{k+1}-\mathfrak{g}_i^k)$ and send it to the master;
        \STATE Update local state $\mathfrak{g}_i^{k+1}$ via $\mathfrak{g}_i^{k+1}=\mathfrak{g}_i^k+\mathfrak{c}_i^k$.
    \ENDFOR
    \STATE Master updates $\mathfrak{g}^{k+1}$ via $\mathfrak{g}^{k+1}=\mathfrak{g}^k+\frac{1}{N}\sum_{i=1}^N \mathfrak{c}_i^k$.
\ENDFOR
\end{algorithmic}
\end{algorithm}

\newpage
\subsection{Convergence Results of EF-Landing in Stochastic Scenarios for Block-wise Constraints}\label{blockwise-appendix}
\begin{theorem}[Block-wise EF-Landing Convergence in Stochastic Scenarios]
Consider the problem \eqref{blockwise-problem} with block-wise constraints \eqref{blockwise-problem}. Letting Assumptions \ref{global-gradient-bound-assumption}, \ref{local-smoothness-assumption}, \ref{lower-bound-assumption} and \ref{bounded-variance-assumption} hold, if compressor $\mathcal{C}$ satisfies Definition~\ref{compressor-definition} (all in the sense of composite data type in \(\mathfrak{R}\) ), if we choose step size $\gamma$ as in Lemma \ref{adequate-step-size-lemma}, by running Algorithm \ref{EF-Landing-algorithm} for $K$ iterations, we have
\begin{align*}
    \frac{1}{K}\sum_{k=0}^{K-1}\sum_{j=1}^J \Ebb[\|\mathrm{grad}_jf(X_j^k)\|_F^2]\leq&~ 4 \mathcal{O}\Big(\frac{1}{\sqrt{NK}}\Big),\\
    \frac{1}{K}\sum_{k=0}^{K-1}\sum_{j=1}^J \Ebb[\mathcal{N}_j(X_j^k)]\leq&~ \frac{2}{\lambda\mu} \mathcal{O}\Big(\frac{1}{\sqrt{NK}}\Big),\\
    \frac{1}{K}\sum_{k=0}^{K-1}\Ebb\Big[\Big\|\frac{\partial f}{\partial x^k}\Big\|_2^2\Big]\leq&~ 2\mathcal{O}\Big(\frac{1}{\sqrt{NK}}\Big),
\end{align*}
where $\mathcal{O}(\frac{1}{\sqrt{NK}})$ is specified as
\begin{align*}
    &\mathcal{O}\Big(\frac{1}{\sqrt{NK}}\Big)=\frac{C^\gamma_1 \mathcal{L}^0}{K}+4\Big(\frac{2\sigma^2 C^\gamma_2 \mathcal{L}^0}{NK}\Big)^{\frac{1}{2}}\\
    +&~4\Big(\frac{2\sigma^2(1-\alpha)}{\theta}\Big)^{\frac{1}{3}}\Big(\frac{C^\gamma_2\mathcal{L}^0}{K}\Big)^{\frac{2}{3}}+4\Big(\frac{4\beta\sigma^2}{\theta}\Big)^{\frac{1}{4}}\Big(\frac{C^\gamma_2\mathcal{L}^0}{K}\Big)^{\frac{3}{4}},
\end{align*} 
when choosing $\eta=\min\{(\frac{C^\gamma_2 N \mathcal{L}^0}{2\sigma^2 K})^{1/2}, (\frac{\theta C^\gamma_2 \mathcal{L}^0}{2(1-\alpha)\sigma^2 K})^{1/3}$, $ (\frac{\theta C^\gamma_2 \mathcal{L}^0}{4\beta\sigma^2 K})^{1/4}\}$. $\mathcal{L}^0$ is defined in \eqref{Lyapunov-function}, $\theta$ and $\beta$ are defined in Theorem \ref{general-convergence-theorem}, $C^\gamma_1,C^\gamma_2$ are two constants defined as $C^\gamma_1=\frac{1}{\gamma_s}+6L_m+\frac{12\lambda^2(1+\epsilon)L_m}{\mu}+12\tilde{L}\sqrt{\frac{\beta}{\theta}}+12\tilde{L}\sqrt{\frac{2\lambda^2(1+\epsilon)\beta}{\mu\theta}}$ and $C^\gamma_2=12L+12L\sqrt{\frac{2\lambda^2(1+\epsilon)}{\mu}}$. \end{theorem}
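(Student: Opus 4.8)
The plan is to lift the entire analysis leading to Theorem~\ref{most-general-ef-landing-theorem} to the composite space $\mathfrak{R}$ equipped with the inner product and norm defined for problem~\eqref{blockwise-problem}, exploiting that the generalized descent direction~\eqref{eq-genearlized-descent} splits block by block. First I would introduce the composite merit function
\[
m(\mathfrak{X}) := f(\mathfrak{X}) - \sum_{j=1}^J h_j(X_j) + \mu\sum_{j=1}^J \mathcal{N}_j(X_j),\qquad
h_j(X_j) := \tfrac12\Big\langle \mathrm{sym}\big(X_j\tranT \tfrac{\partial f}{\partial X_j}\big),\, X_j\tranT X_j - I_{p_j}\Big\rangle,
\]
where $\mathcal{N}_j(X_j) := \tfrac14\|X_j\tranT X_j - I_{p_j}\|_F^2$. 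Since the constrained blocks interact only through $f$, the same reasoning as in \cite{ablin_infeasible_2024} shows that $m$ is $L_m$-smooth and lower bounded by some $m^\star$ on the product safe region $\prod_{j=1}^J\mathrm{St}(p_j,n_j)^\epsilon \times \RR{n_0}$ under Assumption~\ref{lower-bound-assumption}.

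The crucial step is a block-wise version of Lemma~\ref{merit-function-bound}. Because $\tilde{\Lambda}(\mathfrak{X};\mathfrak{g})$ restricted to the $j$-th constrained block is exactly the single-block direction~\eqref{eq-compressed-landing-descent-direction}, the computation in Appendix~\ref{proof-merit-function} transcribes verbatim and gives, for the same uniform choice of $\mu$ (now with $L,L'$ the global constants over the product safe region),
\[
\langle \tilde{\Lambda}_j,\, \nabla_j m \rangle \ \ge\ \tfrac14\|\mathrm{grad}_j f\|_F^2 + \lambda\mu\,\mathcal{N}_j(X_j) + \tfrac14\|\mathrm{skew}(\gbo_j X_j\tranT)X_j\|_F^2 - \big\|\gbo_j - \tfrac{\partial f}{\partial X_j}\big\|_F^2,
\]
while for the unconstrained block Young's inequality gives $\langle \gbo_0,\, \tfrac{\partial f}{\partial x}\rangle \ge \tfrac12\|\tfrac{\partial f}{\partial x}\|_2^2 - \tfrac12\|\gbo_0 - \tfrac{\partial f}{\partial x}\|_2^2$. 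Summing over all blocks, and using $\sum_j\|\gbo_j-\tfrac{\partial f}{\partial X_j}\|_F^2 + \tfrac12\|\gbo_0-\tfrac{\partial f}{\partial x}\|_2^2 \le \|\mathfrak{g}-\nabla f\|^2$, yields a composite lower bound $\langle \tilde{\Lambda},\nabla m\rangle \ge \tfrac14\sum_j\|\mathrm{grad}_j f\|_F^2 + \tfrac12\|\tfrac{\partial f}{\partial x}\|_2^2 + \lambda\mu\sum_j\mathcal{N}_j - \|\mathfrak{g}-\nabla f\|^2$, with the residual term $\tfrac14\sum_j\|\mathrm{skew}(\gbo_j X_j\tranT)X_j\|_F^2$ kept aside to absorb the $\|\tilde{\Lambda}\|^2$ contribution of the descent step. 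The uniform safe step size of Lemma~\ref{uniform-safe-step-size-lemma} also localizes, since each constrained block yields a per-block safe step while the free block never leaves its domain, so one takes the minimum; and the error recursions of Lemmas~\ref{lemma-P}--\ref{lemma-G} use only the norm, the contractivity of $\mathcal{C}$, smoothness and bounded variance, all of which are available in $\mathfrak{R}$, hence carry over unchanged.

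With these ingredients I would replay the proof of Theorem~\ref{general-convergence-theorem} verbatim in the composite norm, applying Lemma~\ref{split-lemma} block by block and using the same Lyapunov function~\eqref{Lyapunov-function}; then I would specialize to $c_1=c_2=2$, pick $\gamma$ as in Lemma~\ref{adequate-step-size-lemma} and $\eta$ as in the proof of Theorem~\ref{most-general-ef-landing-theorem}. This produces the single inequality
\[
\tfrac{1}{4K}\sum_{k}\Ebb\Big[\sum_{j=1}^J\|\mathrm{grad}_j f(X_j^k)\|_F^2\Big] + \tfrac{1}{2K}\sum_k\Ebb\Big[\big\|\tfrac{\partial f}{\partial x^k}\big\|_2^2\Big] + \tfrac{\lambda\mu}{2K}\sum_k\Ebb\Big[\sum_{j=1}^J\mathcal{N}_j(X_j^k)\Big]\ \le\ \mathcal{O}\big(\tfrac{1}{\sqrt{NK}}\big),
\]
whose right-hand side is exactly the claimed expression. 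Since the three terms on the left are each nonnegative, bounding the full sum by the right-hand side and dropping two terms at a time gives the three asserted inequalities, with the constants $4$, $2$ and $\tfrac{2}{\lambda\mu}$ being the reciprocals of $\tfrac14$, $\tfrac12$ and $\tfrac{\lambda\mu}{2}$. I expect the main obstacle to be the block-wise merit-function bound: one must verify that the delicate choice of $\mu$ in Lemma~\ref{merit-function-bound} and the orthogonality in Proposition~\ref{orthogonal-proposition} hold simultaneously for every block, that the composite merit function keeps a single uniform smoothness constant $L_m$ over the product safe region, and that the clipping and safe-step interplay remains consistent across blocks; once this is in place, the rest is a mechanical transcription of the single-block argument.
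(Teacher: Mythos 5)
Your proposal follows essentially the same route as the paper's proof: the same composite merit function \(m(\mathfrak{X})=f(\mathfrak{X})-\sum_j h_j(X_j)+\mu\sum_j\mathcal{N}_j(X_j)\), the observation that \(\partial h_j/\partial X_k=0\) for \(j\neq k\) so that Lemma~\ref{merit-function-bound} applies block by block with the same choice of \(\mu\), a polarization bound for the free block, and then a replay of the arguments of Theorems~\ref{general-convergence-theorem} and~\ref{most-general-ef-landing-theorem} in the composite norm, ending with the same step-size and \(\eta\) choices and the same splitting of the aggregated left-hand side into the three claimed inequalities.

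There is one small but consequential imprecision. For the unconstrained block you discard the term \(\tfrac12\|\gbo_0\|_2^2\) from the identity \(\langle \gbo_0,\partial f/\partial x\rangle=\tfrac12\|\gbo_0\|_2^2+\tfrac12\|\partial f/\partial x\|_2^2-\tfrac12\|\gbo_0-\partial f/\partial x\|_2^2\), keeping only \(\tfrac12\|\partial f/\partial x\|_2^2-\tfrac12\|\gbo_0-\partial f/\partial x\|_2^2\). The paper keeps \(\tfrac12\|\gbo_0\|_2^2\), and it is actually needed: the descent inequality carries \(\tfrac{L_m\gamma^2}{2}\|\tilde{\Lambda}(\mathfrak{X}^k;\tilde{\mathfrak{g}}^k)\|^2\), and the composite bound \(\|\tilde{\Lambda}\|^2\le \sum_j\|\mathrm{skew}(\tilde{\gbo}_j (X_j^k)\tranT)X_j^k\|_F^2+4\lambda^2(1+\epsilon)\sum_j\mathcal{N}_j(X_j^k)+\|\tilde{\gbo}_0^k\|_2^2\) contains a \(\|\tilde{\gbo}_0^k\|_2^2\) term that cannot be absorbed by the skew residuals or the \(\mathcal{N}_j\) terms you set aside. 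The paper cancels it against the retained \(-\tfrac{\gamma}{2}\|\tilde{\gbo}_0^k\|_2^2\) via the condition \(-\tfrac{\gamma}{2}+a\gamma^3+\tfrac{L_m\gamma^2}{2}\le 0\), which already holds for \(\gamma\le\gamma_1\), so no additional step-size restriction is required. With your lossier bound you would be left with an uncancelled \(+\tfrac{L_m\gamma^2}{2}\|\tilde{\gbo}_0^k\|_2^2\); it could still be repaired (e.g.\ via \(\|\tilde{\gbo}_0^k\|_2^2\le 2\|\partial f/\partial x^k\|_2^2+2\|\tilde{\gbo}_0^k-\partial f/\partial x^k\|_2^2\)), but only at the cost of altered constants. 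Retaining \(\tfrac12\|\gbo_0\|_2^2\) in the block-wise merit bound, as the paper does, makes the rest of your transcription go through exactly as you describe.
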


\begin{proof}
We choose the merit function as:
\begin{equation}
    m(\mathfrak{X})=f(\mathfrak{X})-\sum_{j=1}^J h_j(X_j)+\sum_{j=1}^J \mu \mathcal{N}_j(X_j),
\end{equation}
where $\displaystyle{h_j(X_j)=\frac{1}{2}\Big\langle \mathrm{sym}\Big(X_j\tranT \frac{\partial f}{\partial X_j}\Big), X_j\tranT X_j-I_{p_j}\Big\rangle}$ and $\mu$ is a hyper-parameter specified later.

Noticing that partial functions like $h_j(X_j)$ satisfies $\frac{\partial h_j}{\partial X_k}=0,\,\forall j\neq k$, we have
\begin{align}
    \langle \tilde{\Lambda}(\mathfrak{X};\mathfrak{g}), \nabla m(\mathfrak{X})\rangle =&\sum_{j=1}^J\Big\langle \mathrm{skew}(\gbo_j X_j\tranT)X_j +\lambda X_j(X_j\tranT X_j-I_p),\frac{\partial f}{\partial X_j} -\nabla h_j(X_j)+\nabla \mu \mathcal{N}_j (X_j)\Big\rangle\notag
    + \Big\langle \gbo_0, \frac{\partial f}{\partial x}\Big\rangle.
\end{align}
Noticing that for all $j=1,\dots, J$,  $\frac{\partial f}{\partial X_j}$ is $L$-Lipschitz smooth as long as $\nabla f(\mathfrak{X})$ is $L$-Lipschitz smooth, and $\frac{\partial f}{\partial X_j}$ is $L'$-bounded as long as $\nabla f(\mathfrak{X})$ is $L'$-bounded, using Lemma \ref{merit-function-bound} with the same choice of $\mu$, we have 
\begin{align}
    \langle \tilde{\Lambda}(\mathfrak{X};\mathfrak{g}),\nabla m(\mathfrak{X})\rangle& \geq \frac{1}{4}\sum_{j=1}^J  \Big\|\mathrm{skew}\Big(\frac{\partial f}{\partial X_j}X_j\tranT\Big)X_j\Big\|_F^2+\frac{1}{4}\sum_{j=1}^J\Big\|\mathrm{skew}\Big(\boldsymbol{g}_j X_j\tranT\Big)X_j\tranT\Big\|_F^2-\sum_{j=1}^J \Big\|\boldsymbol{g}_j-\frac{\partial f}{\partial X_j}\Big\|_F^2\notag\\
    &+\sum_{j=1}^J \lambda\mu \mathcal{N}(X_j)+\frac{1}{2}\|\gbo_0\|_2^2+\frac{1}{2}\Big\|\frac{\partial f}{\partial x}\Big\|_2^2-\Big\|\gbo_0-\frac{\partial f}{\partial x}\Big\|_2^2,\label{blockwise-merit-function-bound}
\end{align}
where for $\langle \gbo_0,\frac{\partial f}{\partial x}\rangle$ we use $\langle a,b\rangle =\frac{1}{2}\|a\|_2^2+\frac{1}{2}\|b\|_2^2-\frac{1}{2}\|a-b\|_2^2\geq \frac{1}{2}\|a\|_2^2+\frac{1}{2}\|b\|_2^2-\|a-b\|_2^2$. The inequality is for the sake of $\|\mathfrak{g}-\nabla f(\mathfrak{X})\|^2=\sum_{j=1}^J \|\gbo_j-\frac{\partial f}{\partial X_j}\|_F^2+\|\gbo_0-\frac{\partial f}{\partial x}\|_2^2$. Hence, using smoothness of $m(\mathfrak{X}^k)$ and the merit function bound \eqref{blockwise-merit-function-bound}, we have
\begin{align}
    m(\mathfrak{X}^{k+1})\leq&~ m(\mathfrak{X}^k)+\langle m(\mathfrak{X}^k),\mathfrak{X}^{k+1}-\mathfrak{X}^k\rangle +\frac{L_m}{2}\|\mathfrak{X}^{k+1}-\mathfrak{X}^k\|^2\notag\\
    =&~ m(\mathfrak{X}^k)-\gamma \langle \nabla m(\mathfrak{X}^k),\tilde{\Lambda}(\mathfrak{X}^k;\tilde{\mathfrak{g}}^k\rangle + \frac{L_m\gamma^2}{2}\|\tilde{\Lambda}(\mathfrak{X}^k;\tilde{\mathfrak{g}}^k)\|^2\notag\\
    \leq &~ m(\mathfrak{X}^k)-\frac{\gamma}{4}\sum_{j=1}^J \|\mathrm{grad}_j f(X_j^k)\|_F^2-\frac{\gamma}{4}\sum_{j=1}^J \|\mathrm{skew}\big(\tilde{\gbo}_j^k(X_j^k)\tranT\big)X_j^k\|_F^2-\gamma\lambda\mu\sum_{j=1}^J\mathcal{N}_j(X_j^k)\notag\\
    &~+\frac{L_m\gamma^2}{2}\|\tilde{\Lambda}(\mathfrak{X}^k;\tilde{\mathfrak{g}}^k)\|^2-\frac{\gamma}{2}\|\tilde{\gbo}_0^k\|_2^2-\frac{\gamma}{2}\Big\|\frac{\partial f}{\partial x^k}\Big\|_2^2+\gamma \|\tilde{\mathfrak{g}}^k-\nabla f(\mathfrak{X}^k)\|^2
\end{align}
Denote $S^k=\sum_{j=1}^J \|\mathrm{skew}(\tilde{\gbo}_j^k (X_j^k)\tranT)X_j^k\|_F^2$, following the same process as the proof of Theorem \ref{general-convergence-theorem}, we have
\begin{align}
    &~\Ebb[m(\mathfrak{X}^{k+1})]-m^*+\frac{c_1\gamma}{\theta}\Ebb[\tilde{G}^{k+1}]+\frac{2c_1\gamma\eta\beta}{\theta}\Ebb[\tilde{P}^{k+1}]+\frac{c_2\gamma}{\eta}\Ebb[P^{k+1}]\notag\\
    \leq&~ \Ebb[m(\mathfrak{X}^{k})]-m^*+\frac{c_1\gamma}{\theta}\Ebb[\tilde{G}^{k}]+\frac{2c_1\gamma\eta\beta}{\theta}\Ebb[\tilde{P}^{k}]+\frac{c_2\gamma}{\eta}\Ebb[P^{k}]\notag\\
    &-\frac{\gamma}{4}\sum_{j=1}^J \Ebb[\|\mathrm{grad}_j f(X_j^k)\|_F^2]-\frac{\gamma}{4}\Ebb[S^k]-\gamma \lambda\mu \sum_{j=1}^J \Ebb[\mathcal{N}_j(X_j^k)]-\frac{\gamma}{2}\Ebb\Big[\Big\|\frac{\partial f}{\partial x^k}\Big\|_2^2\Big]-\frac{\gamma}{2}\Ebb[\|\tilde{\gbo}_0^k\|_2^2]\notag\\
    &+\Big(\frac{2c_1\beta\tilde{L}^2}{\theta}(1-\eta+\eta^3)\gamma^3+\frac{c_2L^2}{\eta^2}(1-\eta)^2(1+\eta)\gamma^3+\frac{L_m\gamma^2}{2}\Big)\Ebb[\|\tilde{\Lambda}(\mathfrak{X}^k;\tilde{\mathfrak{g}}^k)\|^2]\notag\\
    &+\frac{c_1\gamma\eta^2(1-\alpha)\sigma^2}{\theta}+\frac{2c_1\gamma\eta^3\beta\sigma^2}{\theta}+\frac{c_2\gamma\eta\sigma^2}{N}.
\end{align}
Noticing that $\|\tilde{\Lambda}(\mathfrak{X}^k;\tilde{\mathfrak{g}}^k)\|^2\leq \sum_{j=1}^J \|\mathrm{skew}(\tilde{\gbo}_j (X_j^k)\tranT)X_j^k\|_F^2+4\lambda^2(1+\epsilon)\sum_{j=1}^J \mathcal{N}_j(X_j^k)+\|\tilde{\gbo}_0\|_2^2$, if we choose $\gamma\leq \gamma_1$ defined in Theorem \ref{general-convergence-theorem}, the following inequality also holds, so that the influence of $\|\tilde{\gbo}_0\|_2^2$ can be eliminated.
\begin{align}
    -\frac{\gamma}{2}+\frac{2c_1\beta\tilde{L}^2}{\theta}(1-\eta+\eta^3)\gamma^3+\frac{c_2L^2}{\eta^2}(1-\eta)^2(1+\eta)\gamma^3+\frac{L_m\gamma^2}{2}\leq 0.
\end{align}
Hence, we do not need any additional restriction for $\gamma$ apart from $\gamma_s,\gamma_1,\gamma_2$ in Theorem \ref{general-convergence-theorem}. With the step size above, we have
\begin{align}
    \Ebb[\mathcal{L}^{k+1}]\leq&~ \Ebb[\mathcal{L}^k]-\frac{\gamma}{4}\sum_{j=1}^J \Ebb[\|\mathrm{grad}_j f(X_j^k)\|_F^2]-\frac{\gamma\lambda\mu}{2}\sum_{j=1}^J \Ebb[\mathcal{N}_j(X_j^k)]+ \frac{\gamma}{2}\Ebb\Big[\Big\|\frac{\partial f}{\partial x^k}\Big\|_2^2\Big]\notag\\
    +&~\frac{c_1 \gamma \eta^2(1-\alpha)\sigma^2}{\theta}+\frac{2c_1 \gamma \eta^3 \beta\sigma^2}{\theta}+\frac{c_2\gamma \eta\sigma^2}{N}.
\end{align}
Summing up all inequalities from $k=0$ to $K-1$ and dividing by $K$, we have
\begin{align}
    &~\frac{1}{4K}\sum_{k=0}^{K-1}\sum_{j=1}^J \Ebb[\|\mathrm{grad}f(X_j^k)\|_F^2]+\frac{\lambda\mu}{2K}\sum_{k=0}^{K-1}\sum_{j=0}^J \Ebb[\mathcal{N}_j(X_j^k)]+\frac{1}{2K}\sum_{k=0}^{K-1} \Big\|\frac{\partial f}{\partial x^k}\Big\|_2^2\notag\\
    \leq &~ \frac{\mathcal{L}^0}{\gamma K}+\frac{c_1\eta^2(1-\alpha)\sigma^2}{\theta}+\frac{2c_1\eta^3\beta\sigma^2}{\theta}+\frac{c_2\eta\sigma^2}{N}.
\end{align}
Follow the same process of proof of Theorem \ref{most-general-ef-landing-theorem}, we achieve the final result.

\end{proof}

\section{Further Numerical Experiments}\label{further-experiments-appendix}

\subsection{Online PCA}
\subsubsection{Datasets}\label{pca-data}
 In the experiment, the matrix \( A \in \mathbb{R}^{N \times n} \) is synthetically generated, where \( N = 20,000 \) represents the number of samples, each of dimension \( n = 5000 \). The columns of \( A \) are independently sampled from the normal distribution \( \mathcal{N}(0, UU^T + \sigma I_n) \), with \( \sigma = 0.1 \), and \( U \in \mathbb{R}^{n \times p} \) is sampled from the Stiefel manifold using the uniform Haar distribution.
\subsubsection{Hyperparameters}
We conducted the experiment in deterministic scenarios and distributed the data across 4 nodes. The EF-Landing algorithm was tested for different values of \( p \) (100, 200, 500, and 1000). The results were compared with the QR retraction method, Euclidean gradient descent with \( \ell_2 \) regularization (referred to as the Penalty method), and the Landing algorithm. The compression methods used included Top-\(K\), Random-\(K\) and QSGD. For the Top-\(K\) and Random-\(K\)  compression, the compression rate was set to 0.1, and for QSGD, the quantization level \( s \) was set to 8 for \( p = 100 \) and \( p = 200 \), and 16 for \( p = 500 \) and \( p = 1000 \). The experiment involved 600 iterations for all algorithms with a fixed learning rate, except for the QSGD algorithm when \( p = 1000 \), where a learning rate decay was applied during the iterations.

\begin{table}[H]
\footnotesize
    \centering
    \begin{tabular}{c c c c c } \toprule 
         Method/Hyperparameter&  $\gamma$&  Clipping&  $\lambda$&  compress rate /s\\
         \midrule
         EF-Landing-Top-\(K\)&  1.0&  $10^8$&  0.5&  0.1\\ 
         EF-Landing-Random-\(K\)&  1.0&  $10^8$&  0.5&  0.1\\
 EF-Landing-QSGD& 1.0 (0.01 after 100 steps)& $10^8$& 0.5& 8\\  
 Landing& 1.0& $10^8$& 0.5& --\\ 
 Retraction& 1.0& --& 0.5& --\\
 Penalty method& 1.0& --& 8.0& --\\
    \bottomrule
    \end{tabular}
    \caption{Hyperparameters of $p=100$}
    \label{tab:my_label}
\end{table}

\begin{table}[H]
\footnotesize
    \centering
    \begin{tabular}{c c c c c } \toprule 
         Method/Hyperparameter&  $\gamma$&  Clipping&  $\lambda$&  compress rate /s\\ 
         \midrule
         EF-Landing-Top-\(K\)&  1.0&  $10^8$&  0.5&  0.1\\  
         EF-Landing-Random-\(K\)&  1.0&  $10^8$&  0.5&  0.1\\ 
 EF-Landing-QSGD& 1.0 (0.01 after 100 steps)& $10^8$& 0.5& 8\\ 
 Landing& 1.0& $10^8$& 0.5& --\\ 
 Retraction& 1.0& --& 0.5& --\\
 Penalty method& 1.0& --& 8.0& --\\
    \bottomrule
    \end{tabular}
    \caption{Hyperparameters of $p=200$}
    \label{tab:my_label}
\end{table}

\begin{table}[H]
\footnotesize
    \centering
    \begin{tabular}{c c c c c }\toprule
         Method/Hyperparameter&  $\gamma$&  Clipping&  $\lambda$&  compress rate /s\\
         \midrule
         EF-Landing-Top-\(K\)&  1.0 (0.01 after 100 steps)&  $10^8$&  0.5&  0.1\\  
         EF-Landing-Random-\(K\)&  1.0&  $10^8$&  0.5&  0.1\\  
 EF-Landing-QSGD& 1.0 (0.1 after 50 steps)& $10^8$& 0.5& 16\\  
 Landing& 1.0& $10^8$& 0.5& --\\ 
 Retraction& 1.0& --& 0.5& --\\
 Penalty method& 1.0& --& 8.0& --\\
 \bottomrule
    \end{tabular}
    \caption{Hyperparameters of $p=500$}
    \label{tab:my_label}
\end{table}
\begin{table}[H]
\footnotesize
    \centering
    \begin{tabular}{ c c c c c } \toprule
         Method/Hyperparameter&  $\gamma$&  Clipping&  $\lambda$&  compress rate /s\\ 
         \midrule
         EF-Landing-Top-\(K\)&  1.0&  $10^8$&  0.5&  0.1\\  
         EF-Landing-Random-\(K\)&  1.0&  $10^8$&  0.5&  0.1\\ 
 EF-Landing-QSGD& 1.0 (0.1 after 100 steps)& $10^8$& 0.5& 16\\  
 Landing& 1.0& $10^8$& 0.5& --\\ 
 Retraction& 1.0& --& 0.5& --\\
 Penalty method& 1.0& --& 8.0& --\\
 \bottomrule
    \end{tabular}
    \caption{Hyperparameters of $p=1000$}
    \label{tab:my_label}
\end{table}

\subsubsection{Experiment results}
\begin{figure}[H]
    \centering
    \includegraphics[width=1.00\linewidth]{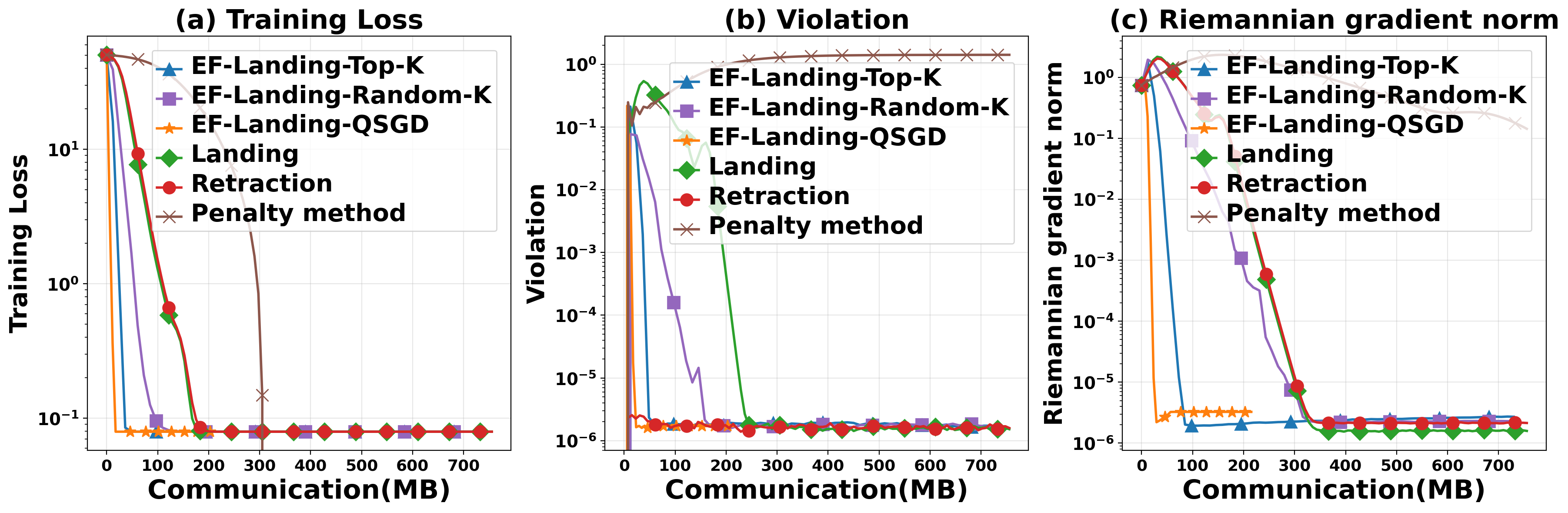}
    \caption{Performance of EF-Landing and other algorithms on online PCA, with \(p = 100\)}
    \label{fig:p100}
\end{figure}

The corresponding experimental results for parameter \(p\) taking values of 100, 200, and 500 are shown in Figures~\ref{fig:p100}, \ref{fig:p200}, and \ref{fig:p500}, respectively. We can observe that, regardless of the compression algorithm, EF-Landing consistently reduces the communication volume by at least half while still converging to the optimal value of the function and ensuring that the norm of Riemannian gradient approaches 0. Additionally, compared to the Landing algorithm, the EF-Landing algorithm requires less communication to satisfy the constraints on the variable \( X \), specifically ensuring that $\mathcal{N}(X)$ is sufficiently small, as demonstrated in Figures~\ref{fig:p200} and ~\ref{fig:p500}.

\begin{figure}[H]
    \centering
    \includegraphics[width=1.00\linewidth]{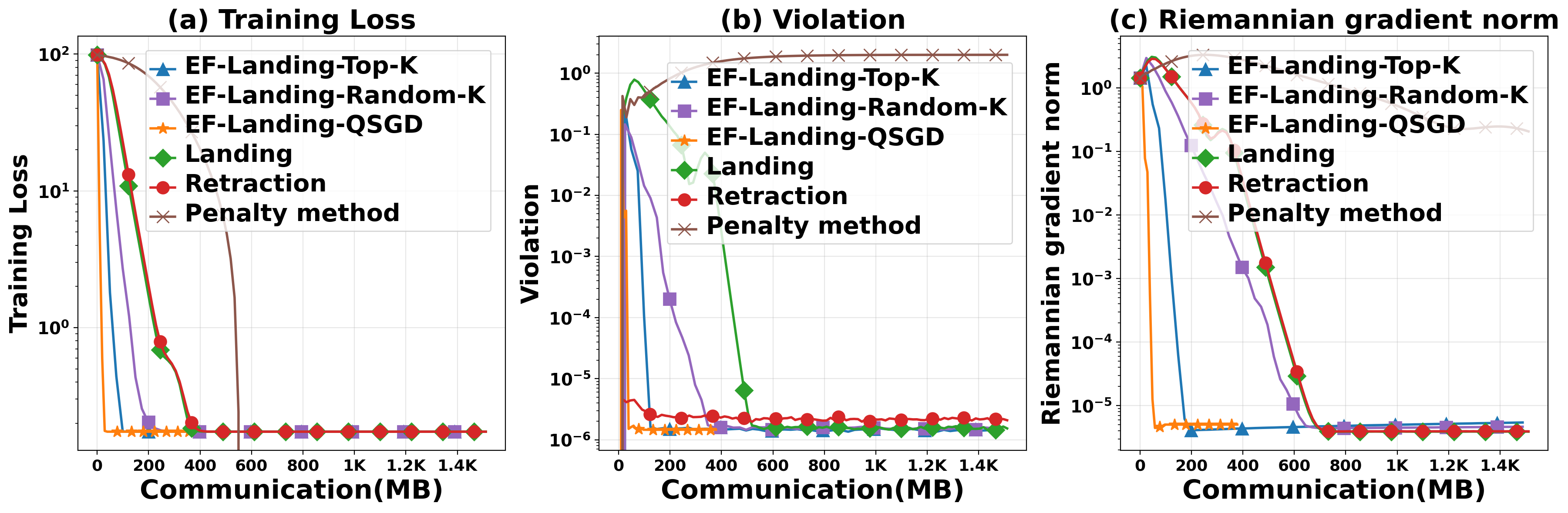}
    \caption{Performance of EF-Landing and other algorithms on online PCA, with \(p = 200\)}
    \label{fig:p200}
    
\end{figure}
\vspace{-25pt}

\begin{figure}[H]
    \centering
    \includegraphics[width=1.00\linewidth]{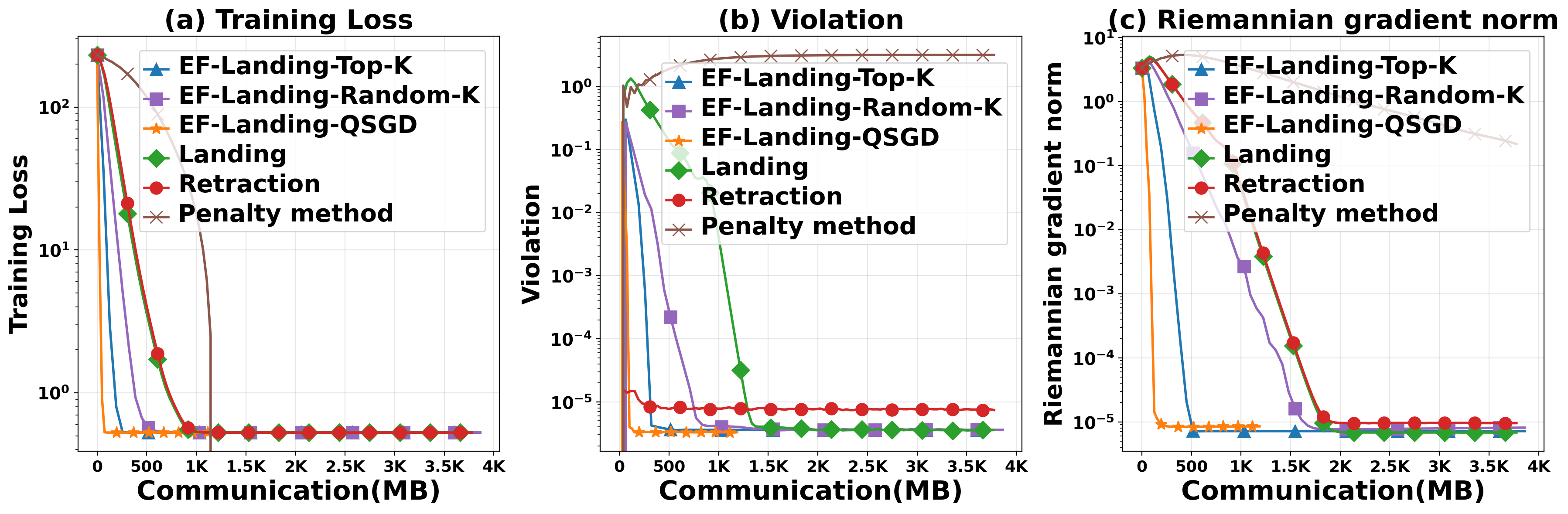}
    \caption{Performance of EF-Landing and other algorithms on online PCA, with \(p = 500\)}
    \label{fig:p500}
\end{figure}
\vspace{-10pt}

Compared to the QR retraction method, EF-Landing reduces a significant amount of computational effort by eliminating the need for QR decomposition, making the algorithm more efficient. On the other hand, the Penalty method is highly sensitive to the choice of \( \lambda \). If \( \lambda \) is too small, the parameters may fail to satisfy the constraint, as shown in Figures~\ref{fig:p=1000} and ~\ref{fig:p100} to ~\ref{fig:p500}. If \( \lambda \) is too large, the Riemannian gradient may fail to converge to zero and can lead to numerical instability. Although the Landing method can more accurately
satisfy the orthogonality constraints, it requires several times more communication. By contrast, the EF-Landing algorithm remains highly efficient.

\subsection{Neural Networks with Orthogonality Constraints}
We also tested the performance of the EF-Landing algorithm on neural network models with orthogonal constraints applied to the convolutional layers. Orthogonal constraints are playing an increasingly important role in deep learning. For example, they can ensure the stability of gradient magnitudes during training \cite{arjovsky_unitary_2016, saxe2014exact}, preventing issues like gradient explosion or vanishing. Furthermore, there has been growing attention on how to design orthogonal convolutions \cite{singla2021skew, boissin2025adaptive, yu2022constructing}.
\subsubsection{Datasets}
The MNIST dataset only has a citation requirement \cite{lecun2010mnist}. It includes 28 × 28 grayscale images of handwritten digits from 0 to 9, containing 60,000 training data samples and 10,000 test data samples.

The CIFAR-10 dataset, which consists of 32 × 32 color images depicting 10 distinct categories of real-world objects, is comprised of 50,000 training samples and 10,000 testing samples. It requires citation \cite{krizhevsky2009learning} for usage.

We applied some basic data augmentation techniques to these datasets during the training stage.  For CIFAR-10, we applied  random cropping, random horizontal flipping and random gray scale.

We conducted experiments on VGG16 and ResNet-18, comparing the performance of the EF-Landing algorithm with the Landing and QR retraction algorithms. Specifically, we applied orthogonal constraints to the convolutional layers by reshaping the convolutional kernels to the size \( n_{\text{out}} \times n_{\text{in}} {n_x}n_y \), where \( n_{\text{in}} \) and \( n_{\text{out}} \) represent the number of input and output channels, respectively, and \( n_x \) and \( n_y \) are the filter dimensions \cite{ablin_infeasible_2024}. In cases where the reshaped matrix becomes wide instead of tall, we enforce orthogonality on its transpose.It is worth noting that the Block-wise Constraints formulation in \eqref{blockwise-problem} better suits this problem, as we impose constraints on each individual convolutional layer rather than on the entire neural network parameters \(X\).

\subsubsection{VGG16 on MNIST}

In this experiment, we use the built-in VGG16 model from PyTorch and initialize the convolutional layers using QR decomposition to enforce orthogonal constraints. To match the model architecture, we added a convolutional layer to the VGG16 model. This convolutional layer has an output channel size of \(3\) and a padding of \(3\). Consequently, we adjusted the shape of the data to \(32\times32\). The compression methods applied are Top-\( K \) and Rand-\( K \), with a compression ratio of 0.2. The training data is uniformly distributed across 4 nodes. The initial learning rate $\gamma$ is determined via grid search over the set $\{0.001, 0.01, 0.1, 1.0\}$, and is decayed by a factor of $1/10$ every 10 epochs. Similarly, the step size $\eta$ is selected from $\{0.1, 0.5, 0.7\}$ via grid search. We use a batch size of 128 and train the model for a total of 30 epochs.

\begin{figure}[H]
    \centering
    \includegraphics[width=0.8\linewidth]{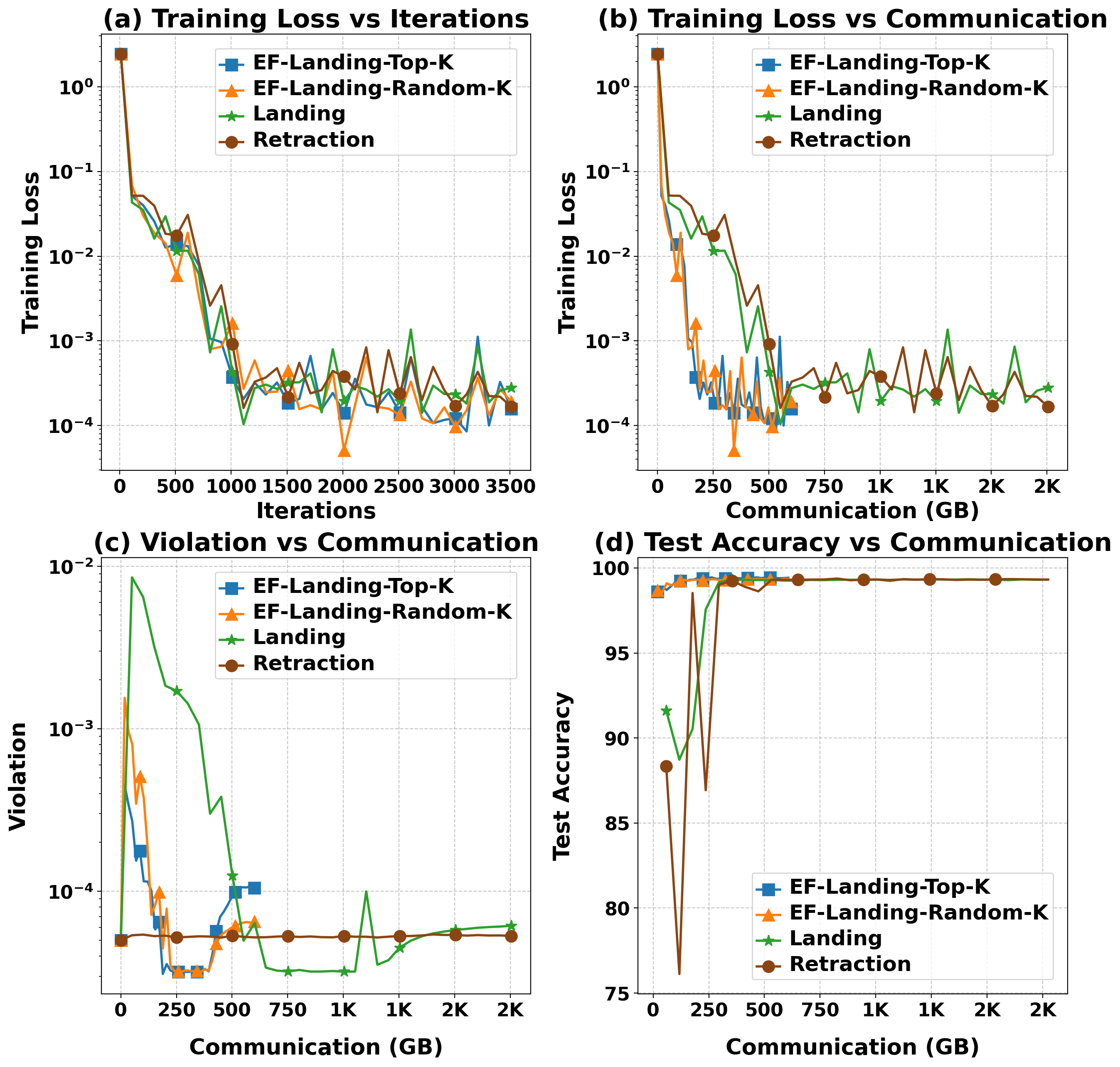}
    \caption{Performance of EF-Landing and other algorithms on deep learning with VGG16 on MNIST.}
    \label{fig:vgg16-mnist}
\end{figure}

\begin{table}
    \centering
    \begin{tabular}{c c c c c c } 
    \toprule
         Method/Hyperparameter&  $\gamma$&  $\eta$&  Clipping&  $\lambda$& compress rate \\ 
         \midrule
         EF-Landing-Top-\(K\)&  0.1&  0.1&  $10^8$&  1.0& 0.2\\  
         EF-Landing-Random-\(K\)&  0.1&  0.5&  $10^8$&  1.0& 0.2\\  
         Landing&  0.1&  --&  --&  1.0& --\\ 
 Retraction& 0.1& --& --& 1.0&--\\ 
 \bottomrule
    \end{tabular}
    \caption{Hyperparameters: VGG16 on MNIST}
    \label{tab:my_label}
\end{table}

Figure~\ref{fig:vgg16-mnist} demonstrates that the EF-Landing algorithm achieves convergence of the training loss function and test accuracy to a steady state with less communication volume, and the accuracy on the test set stabilizes around 99\%. In addition, the EF-Landing algorithm outperforms the Landing algorithm in satisfying the orthogonality constraint under this experiment, which also highlights the advantage of the EF-Landing algorithm to some extent.

\subsubsection{VGG16 and ResNet-18 on CIFAR-10}

Additionally, we tested the performance of EF-Landing on the CIFAR-10 dataset in a 4-node setting, using the Top-\(K\) and Rand-\(K\) compressors with a compression ratio of 0.2. We compared the results with the Landing and QR retraction methods. In the experiment, each algorithm was trained for 150 epochs, and after the 100th epoch, the learning rate was reduced to \( \frac{1}{10} \) of its original value, with \( \lambda \) set to 1.0.

\begin{figure}[H]
    \centering
    \includegraphics[width=0.8\linewidth]{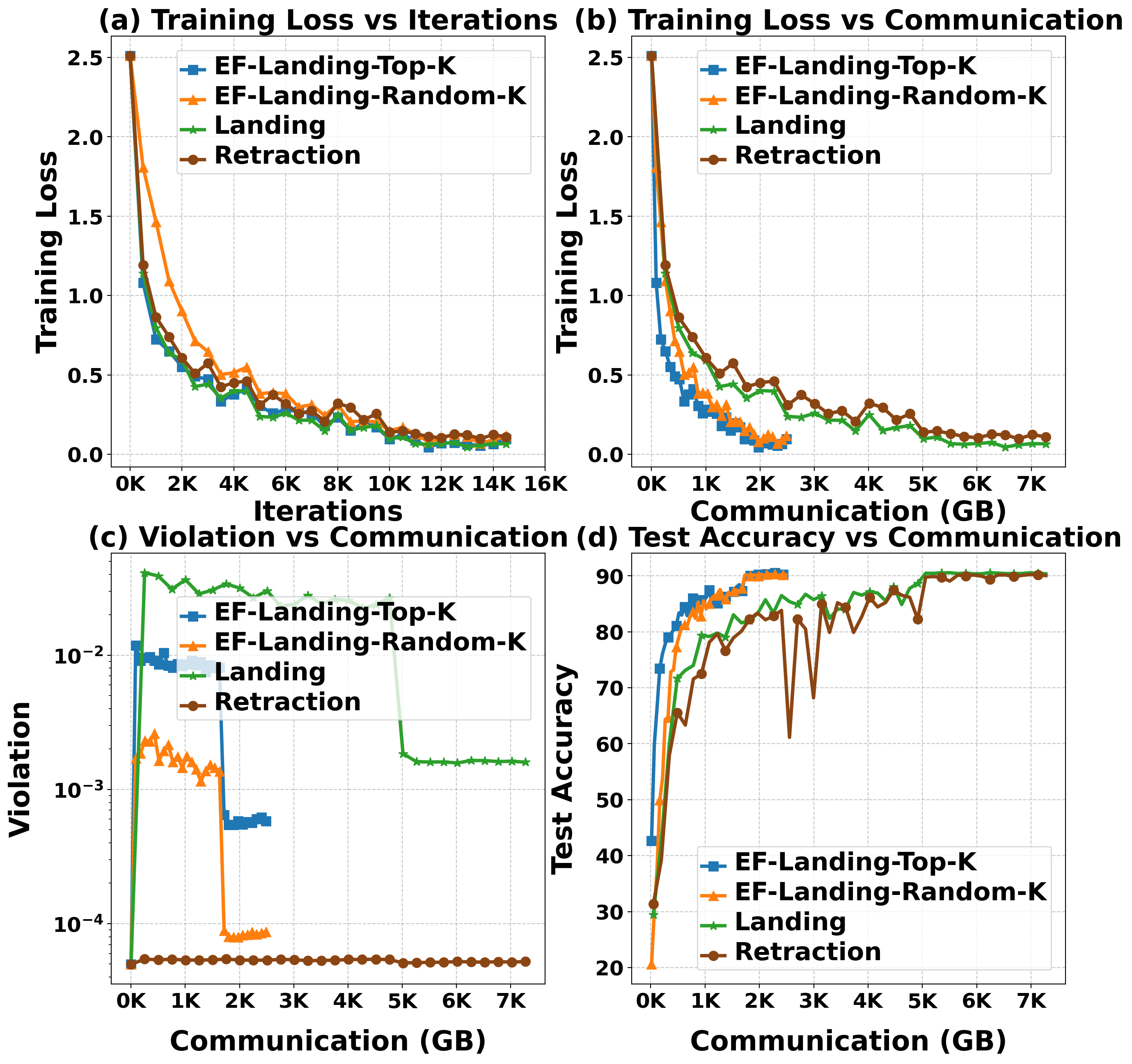}
    \caption{Performance of EF-Landing and other algorithms on deep learning with VGG16 on CIFAR-10.}
    \label{fig:VGG16-CIFAR10}
\end{figure}

As shown in Figures~\ref{fig:cifar-resnet18} and ~\ref{fig:VGG16-CIFAR10}, in terms of satisfying the constraint conditions, the EF-Landing algorithm not only reduces $\mathcal{N}(X)$ with less communication overhead compared to the Landing algorithm, but also satisfies the constraints more accurately. Furthermore, in terms of computational cost, the EF-Landing algorithm offers a clear advantage over the QR retraction method, being free of QR decomposition calculations.

\begin{table}
    \centering
    \begin{tabular}{c c c c c c } \toprule 
         Method/Hyperparameter&  $\gamma$&  $\eta$&  Clipping&  $\lambda$& compress rate \\ \midrule
         EF-Landing-Top-\(K\)&  0.1&  0.5&  $10^8$&  1.0& 0.2\\ 
         EF-Landing-Random-\(K\)&  0.1&  0.1&  $10^8$&  1.0& 0.2\\ 
         Landing&  0.1&  --&  --&  1.0& --\\ 
 Retraction& 0.1& --& --& 1.0&--\\ 
 \bottomrule
    \end{tabular}
    \caption{Hyperparameters: VGG16 on CIFAR-10}
    \label{tab:my_label}
\end{table}

\begin{table}
    \centering
    \begin{tabular}{ c c c c c c } \toprule 
         Method/Hyperparameter&  $\gamma$&  $\eta$&  Clipping&  $\lambda$& compress rate \\ \midrule
         EF-Landing-Top-\(K\)&  0.1&  0.5&  $10^8$&  1.0& 0.2\\ 
         EF-Landing-Random-\(K\)&  0.1&  0.5&  $10^8$&  1.0& 0.2\\ 
         Landing&  0.1&  --&  --&  1.0& --\\ 
 Retraction& 0.1& --& --& 1.0&--\\ \bottomrule
    \end{tabular}
    \caption{Hyperparameters: ResNet-18 on CIFAR-10}
    \label{tab:my_label}
\end{table}

\subsection{Comparison with the Penalty method}\label{penalty method}
This section mainly compares the performance of the EF-Landing algorithm and the Penalty method.

\subsubsection{PCA Problem}
The data generation process for the Online PCA problem follows the description provided in Appendix ~\ref{pca-data}. The step size $\gamma$ is selected via grid search over the set $\{0.001, 0.01, 0.1, 1.0\}$. As shown in Figures ~\ref{fig:penalty-100} to ~\ref{fig:penalty-1000}, compared with the EF-Landing-based algorithm, the Penalty method is highly sensitive to the choice of the penalty parameter $\lambda$. When $\lambda$ is set too small, the orthogonality constraint cannot be effectively enforced. On the other hand, choosing a larger $\lambda$ may introduce numerical instability, necessitating a smaller step size and consequently slowing down convergence. In contrast, the EF-Landing-based algorithm can satisfy the orthogonality constraint with a relatively small $\lambda$, leading to faster convergence, and consistently achieves a smaller Riemannian gradient norm.

\begin{figure}[H]
    \centering
    \includegraphics[width=0.9\linewidth]{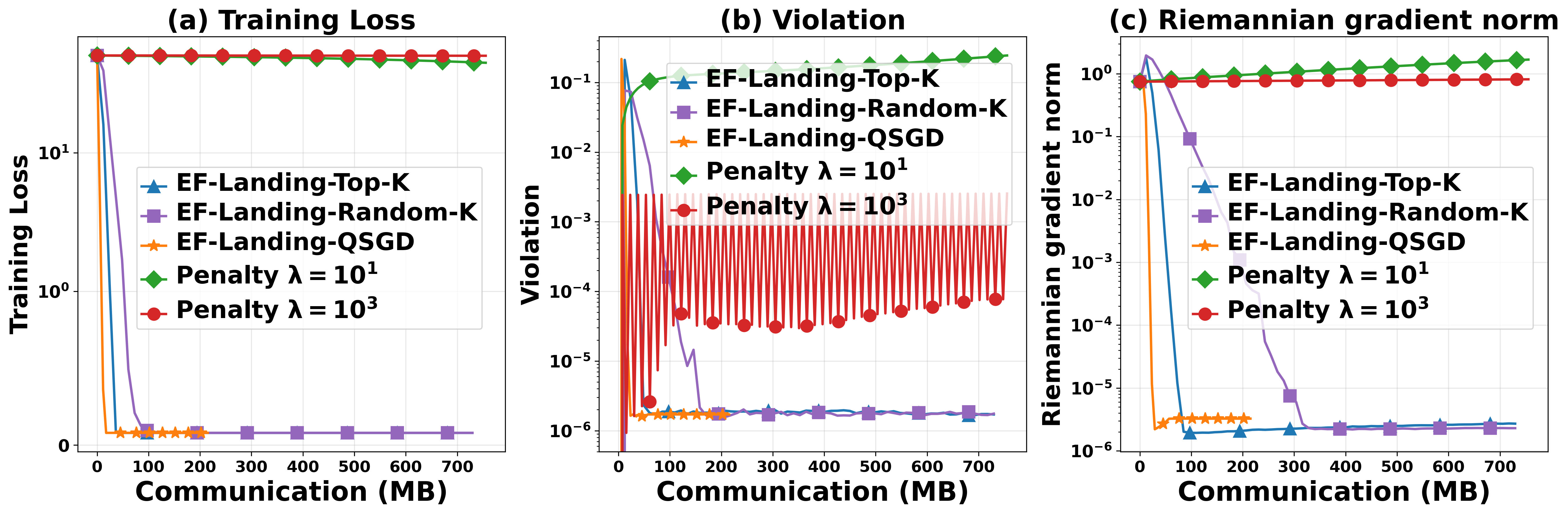}
    \caption{Comparison with Penalty Method: $p=100$}
    \label{fig:penalty-100}
\end{figure}

\begin{table}[H]
\footnotesize
    \centering
    \begin{tabular}{c c c c c } \toprule 
         Method/Hyperparameter&  $\gamma$&  Clipping&  $\lambda$&  compress rate /s\\ \midrule 
         EF-Landing-Top-\(K\)&  1.0&  $10^8$&  0.5&  0.1\\  
         EF-Landing-Random-\(K\)&  1.0&  $10^8$&  0.5&  0.1\\
 Penalty method & 0.01& --& 10.0& --\\
 Penalty method& 0.001& --& 1000.0&--\\
 \bottomrule
    \end{tabular}
    \caption{Hyperparameters: Comparison with Penalty Method $p=100$}
    \label{tab:my_label}
\end{table}

\begin{figure}[H]
    \centering
    \includegraphics[width=0.9\linewidth]{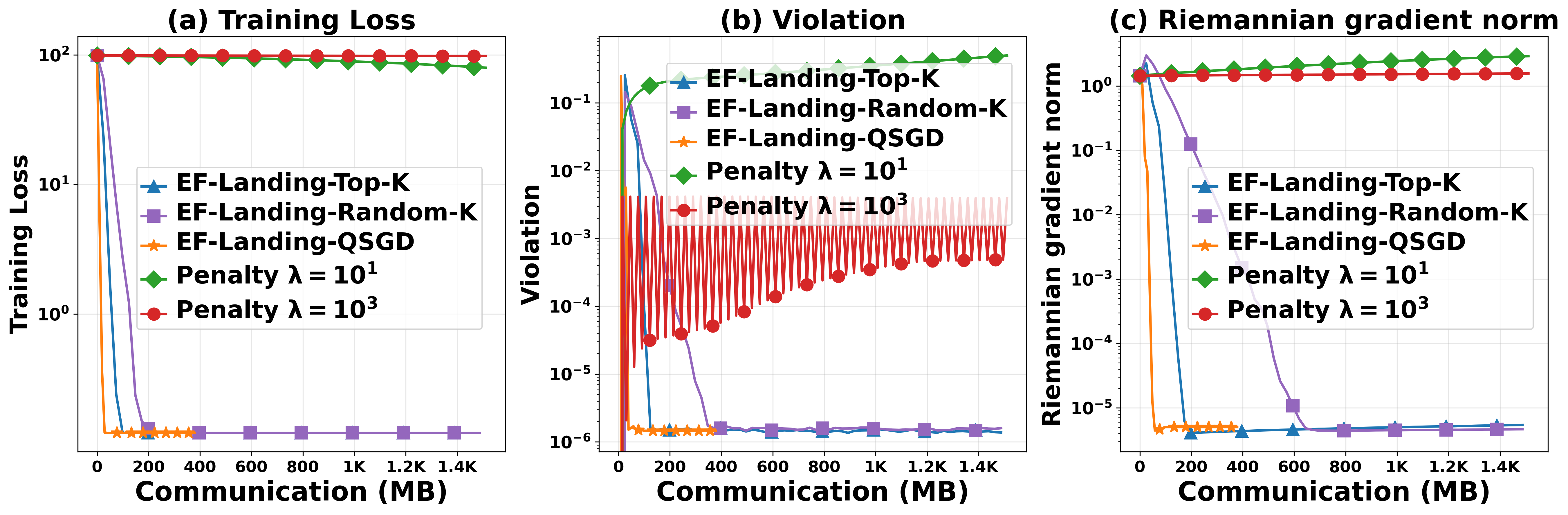}
    \caption{Comparison with Penalty Method: $p=200$}
    \label{fig:penalty-200}
\end{figure}

\begin{table}[H]
\footnotesize
    \centering
    \begin{tabular}{c c c c c } \toprule
         Method/Hyperparameter&  $\gamma$&  Clipping&  $\lambda$&  compress rate /s\\ \midrule
         EF-Landing-Top-\(K\)&  1.0&  $10^8$&  0.5&  0.1\\  
         EF-Landing-Random-\(K\)&  1.0&  $10^8$&  0.5&  0.1\\
 Penalty method & 0.01& --& 10.0& --\\
 Penalty method& 0.001& --& 1000.0&--\\
 \bottomrule
    \end{tabular}
    \caption{Hyperparameters: Comparison with Penalty Method $p=200$}
    \label{tab:my_label}
\end{table}

\begin{figure}[H]
    \centering
    \includegraphics[width=0.9\linewidth]{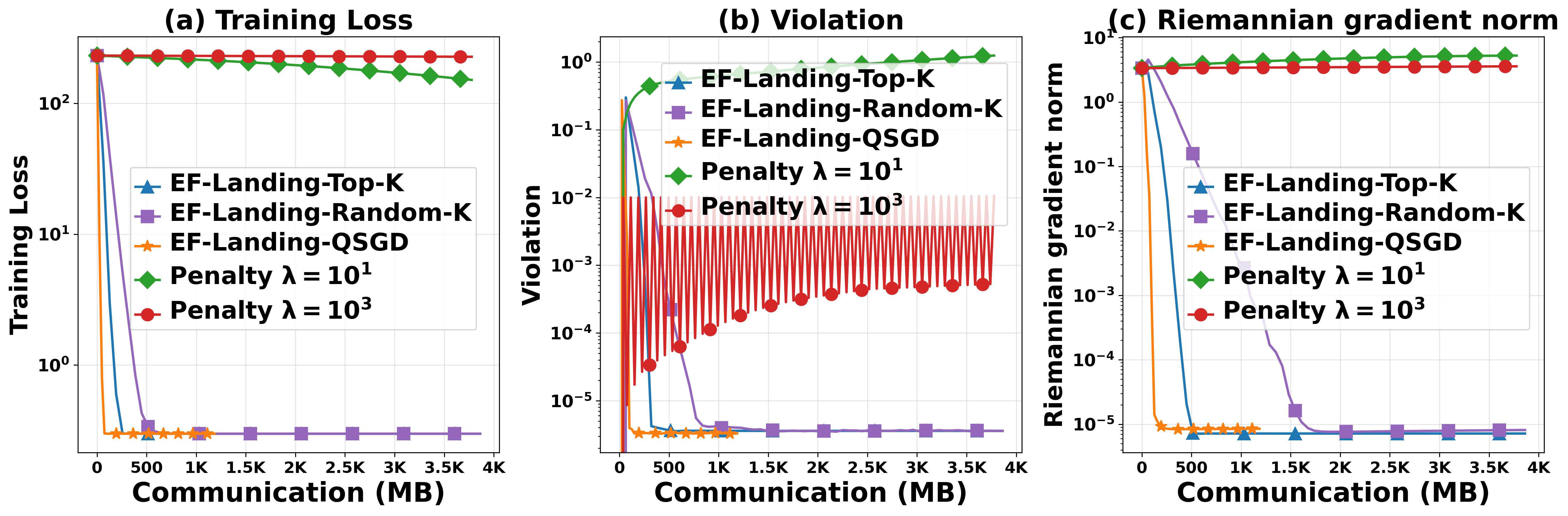}
    \caption{Comparison with Penalty Method: $p=500$}
    \label{fig:penalty-500}
\end{figure}

\begin{table}[H]
\footnotesize
    \centering
    \begin{tabular}{c c c c c } \toprule 
         Method/Hyperparameter&  $\gamma$&  Clipping&  $\lambda$&  compress rate /s\\ \midrule
         EF-Landing-Top-\(K\)&  1.0&  $10^8$&  0.5&  0.1\\  
         EF-Landing-Random-\(K\)&  1.0&  $10^8$&  0.5&  0.1\\
 Penalty method & 0.01& --& 10.0& --\\
 Penalty method& 0.001& --& 1000.0&--\\
 \bottomrule
    \end{tabular}
    \caption{Hyperparameters: Comparison with Penalty Method $p=500$}
    \label{tab:my_label}
\end{table}

\begin{figure}[H]
    \centering
    \includegraphics[width=0.9\linewidth]{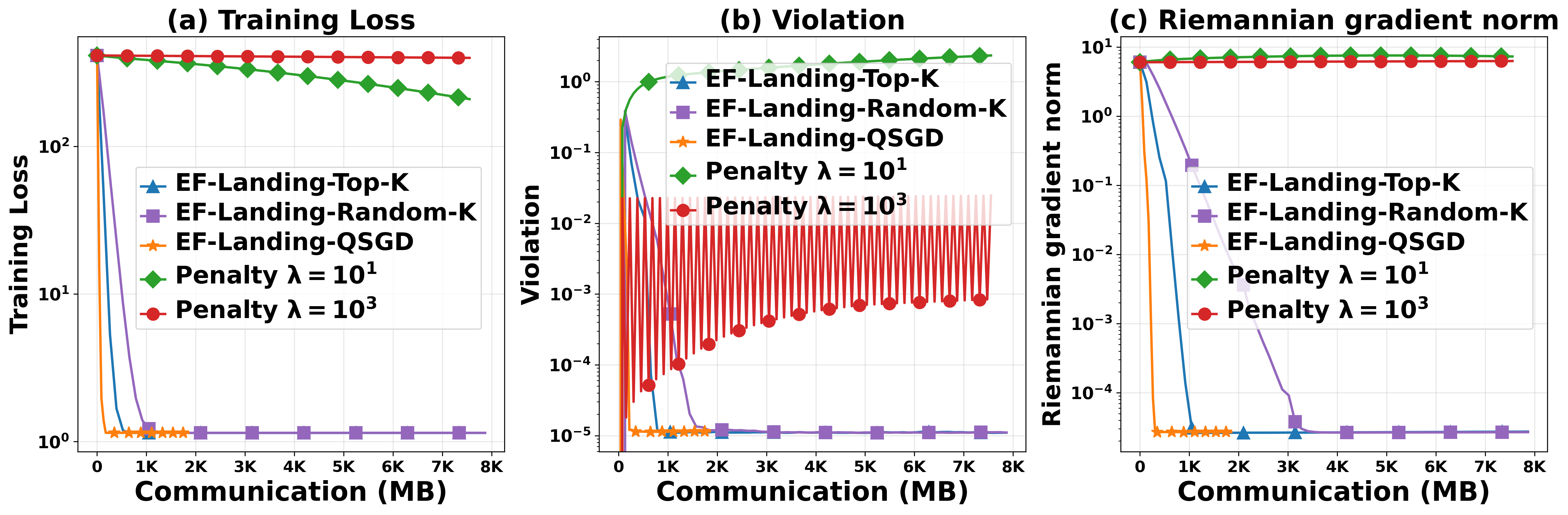}
    \caption{Comparison with Penalty Method: $p=1000$}
    \label{fig:penalty-1000}
\end{figure}

\begin{table}[H]
\footnotesize
    \centering
    \begin{tabular}{c c c c c } \toprule 
         Method/Hyperparameter&  $\gamma$&  Clipping&  $\lambda$&  compress rate /s\\ \midrule 
         EF-Landing-Top-\(K\)&  1.0&  $10^8$&  0.5&  0.1\\  
         EF-Landing-Random-\(K\)&  1.0&  $10^8$&  0.5&  0.1\\
 Penalty method & 0.01& --& 10.0& --\\
 Penalty method& 0.001& --& 1000.0&--\\
 \bottomrule
    \end{tabular}
    \caption{Hyperparameters: Comparison with Penalty Method $p=1000$}
    \label{tab:my_label}
\end{table}

\subsubsection{Neural Network}
We compare the Penalty method and the EF-Landing-based algorithm on the CIFAR-10 dataset using VGG16 and ResNet-18 neural network architectures. A grid search was conducted over $\gamma \in \{0.001, 0.01, 0.1, 1.0\}$ and $\eta \in \{0.1, 0.5, 0.7\}$ to determine the optimal hyperparameters. Each algorithm was trained for 150 epochs, and after the 100th epoch, $\gamma$ was reduced to $\frac{1}{10}$ of its original value. A batch size of 128 was used throughout the training process.

From the experimental results (Figures~\ref{fig:vgg16-cifar10-penalty},\ref{fig:resnet18-cifar10-penalty}), we observe that the EF-Landing-based algorithm achieves higher accuracy while significantly reducing communication costs, and it satisfies the orthogonality constraint more precisely. In contrast, the Penalty method performs poorly. When using a relatively small penalty coefficient, such as $\lambda = 10$, it can achieve relatively high accuracy, but fails to satisfy the constraint as precisely as the EF-Landing-based algorithm. On the other hand, increasing the penalty to $\lambda = 1000$ improves constraint satisfaction but leads to a significant drop in accuracy.

\begin{table}[H]
    \centering
    \begin{tabular}{ c c c c c c } \toprule
         Method/Hyperparameter&  $\gamma$&  $\eta$&  Clipping&  $\lambda$& compress rate \\ \midrule 
         EF-Landing-Top-\(K\)&  0.1&  0.5&  $10^8$&  1.0& 0.2\\  
         EF-Landing-Random-\(K\)&  0.1&  0.1&  $10^8$&  1.0& 0.2\\  
         Penalty method&  0.01&  --&  --&  10.0& --\\  
 Penalty method& 0.001& --& --& 1000.0&--\\ 
 \bottomrule
    \end{tabular}
    \caption{Hyperparameters: EF-Landing Algorithm and Penalty Method: VGG16 on CIFAR-10 }
    \label{tab:my_label}
\end{table}

\begin{figure}
    \centering
    \includegraphics[width=0.60\linewidth]{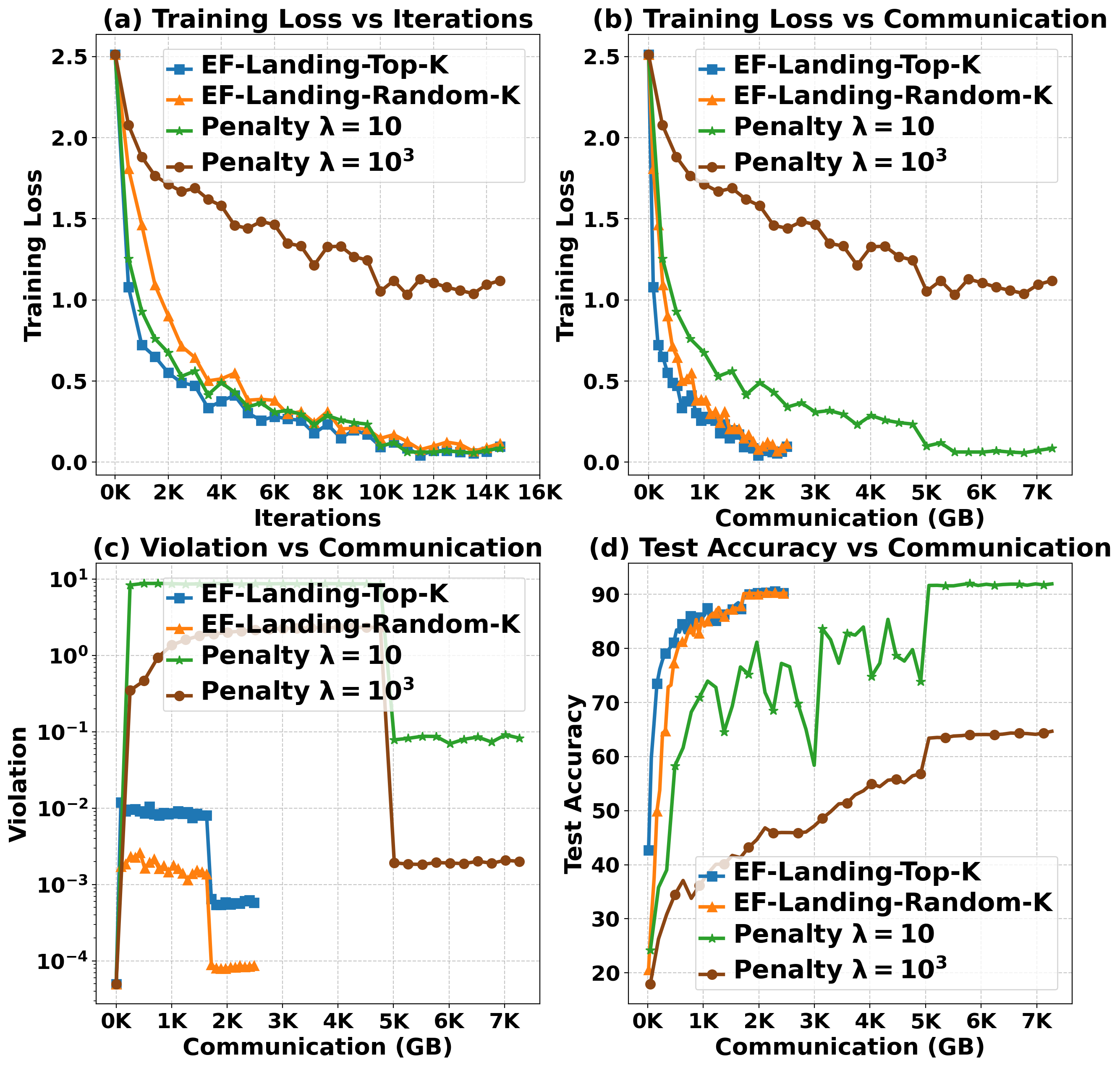}
    \caption{Comparison of EF-Landing Algorithm and Penalty Method: VGG16 on CIFAR-10}
    \label{fig:vgg16-cifar10-penalty}
\end{figure}

\begin{figure}
    \centering
    \includegraphics[width=0.60\linewidth]{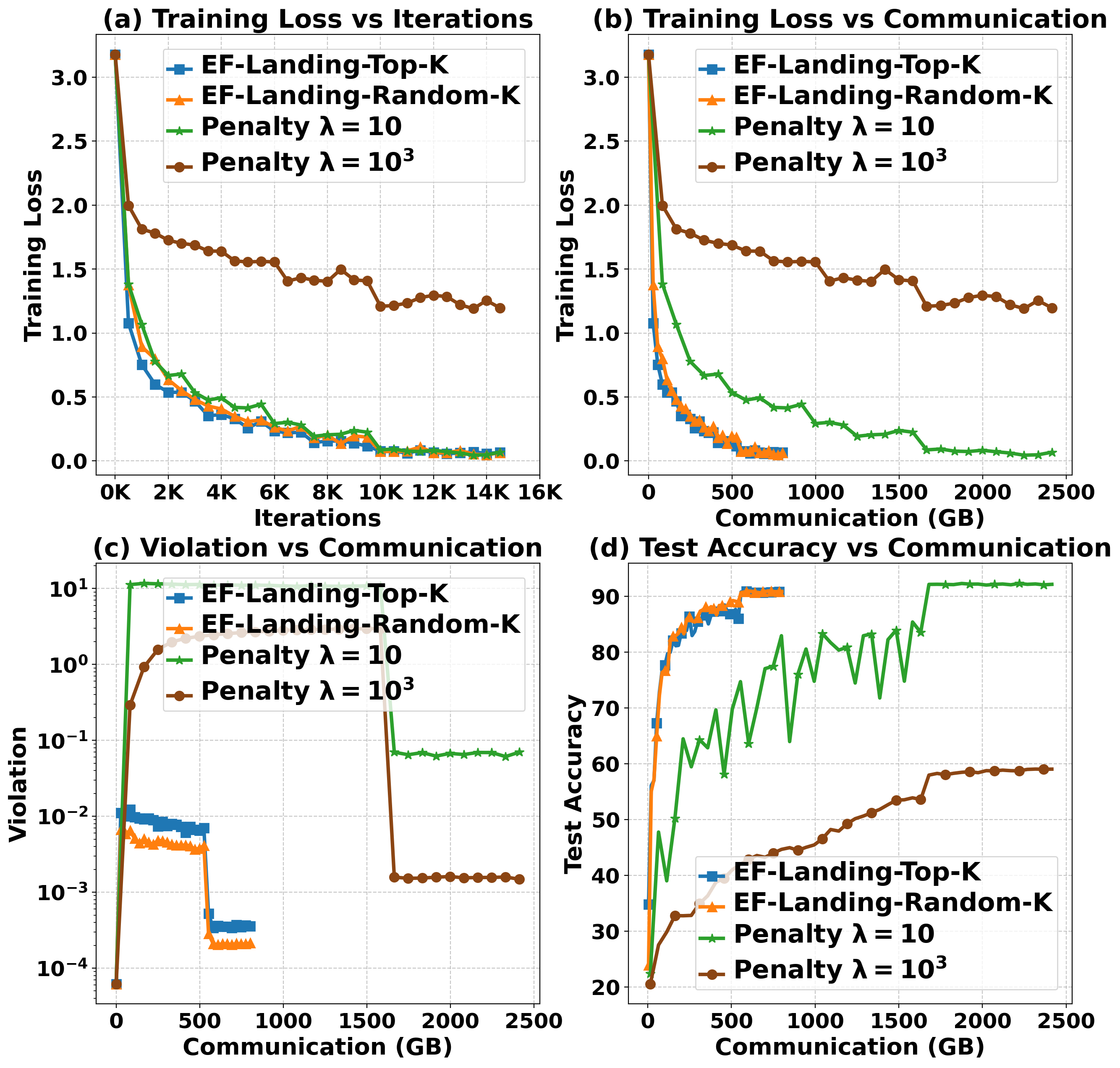}
    \caption{Comparison of EF-Landing Algorithm and Penalty Method: ResNet-18 on CIFAR-10}
    \label{fig:resnet18-cifar10-penalty}
\end{figure}

\begin{table}[H]
    \centering
    \begin{tabular}{c c c c c c } \toprule 
         Method/Hyperparameter&  $\gamma$&  $\eta$&  Clipping&  $\lambda$& compress rate \\ \midrule
         EF-Landing-Top-\(K\)&  0.1&  0.5&  $10^8$&  1.0& 0.2\\  
         EF-Landing-Random-\(K\)&  0.1&  0.5&  $10^8$&  1.0& 0.2\\  
         Penalty method&  0.01&  --&  --&  10.0& --\\  
 Penalty method& 0.001& --& --& 1000.0&--\\ 
 \bottomrule
    \end{tabular}
    \caption{Hyperparameters: EF-Landing Algorithm and Penalty Method: ResNet-18 on CIFAR-10}
    \label{tab:my_label}
\end{table}

\subsection{Comparison with Decentralized Setting}

We performed online PCA experiments using synthetic datasets, each comprising 1000 data points per node with $n = 100$ and $p = 5$, following a data generation process which is detailed in Appendix~\ref{pca-data}.

For the Decentralized distributed experiment, we chose the DRSGD and DRGTA algorithms from \cite{chen_decentralized_2021} as baselines. In the experiment, we set the number of nodes to 20, the communication rounds \( t \) to 1, and selected a ring topology with the Metropolis constant matrix associated with the graph. Under a ring topology communication structure, each node interacts solely with its two immediate neighbors. When the number of nodes is set to 20, this configuration is approximately equivalent to the centralized setting with a compression rate of 0.1.
 For DRGTA, we set \( \hat{\beta} \) to 0.05, while for the DRSGD algorithm, we set \( \hat{\beta} \) to 0.01.

For the EF-Landing algorithm, we set \( \lambda = 1.0 \), the learning rate \( \gamma = 0.2 \), and the compress rate to 0.1.It is worth noting that the metric used to evaluate the satisfaction of the manifold constraint follows the approach proposed in this paper, which is based on the canonical correlations \cite{golub1995canonical} between $X_k$ and $X_*$.

\begin{figure}[H]
    \centering
    \includegraphics[width=0.80\linewidth]{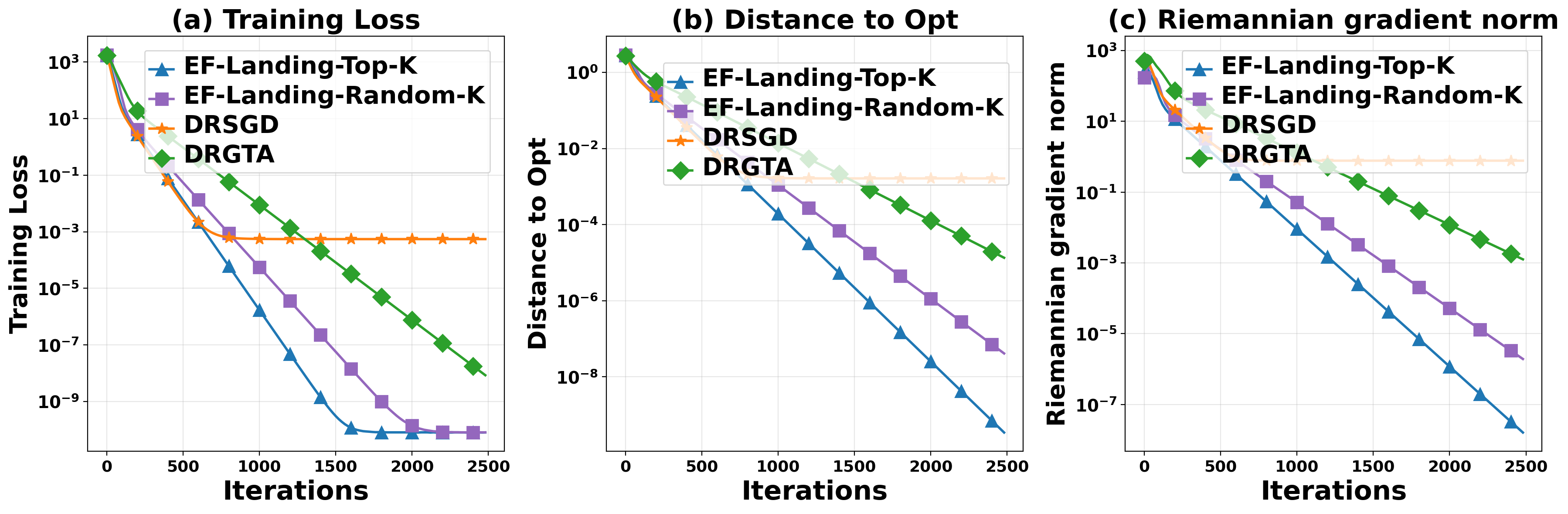}
    \caption{Comparison of EF-Landing Algorithm and Decentralized Algorithm On Synthetic Dataset 1}
    \label{fig:decentralized-1}
\end{figure}

\begin{figure}[H]
    \centering
    \includegraphics[width=0.80\linewidth]{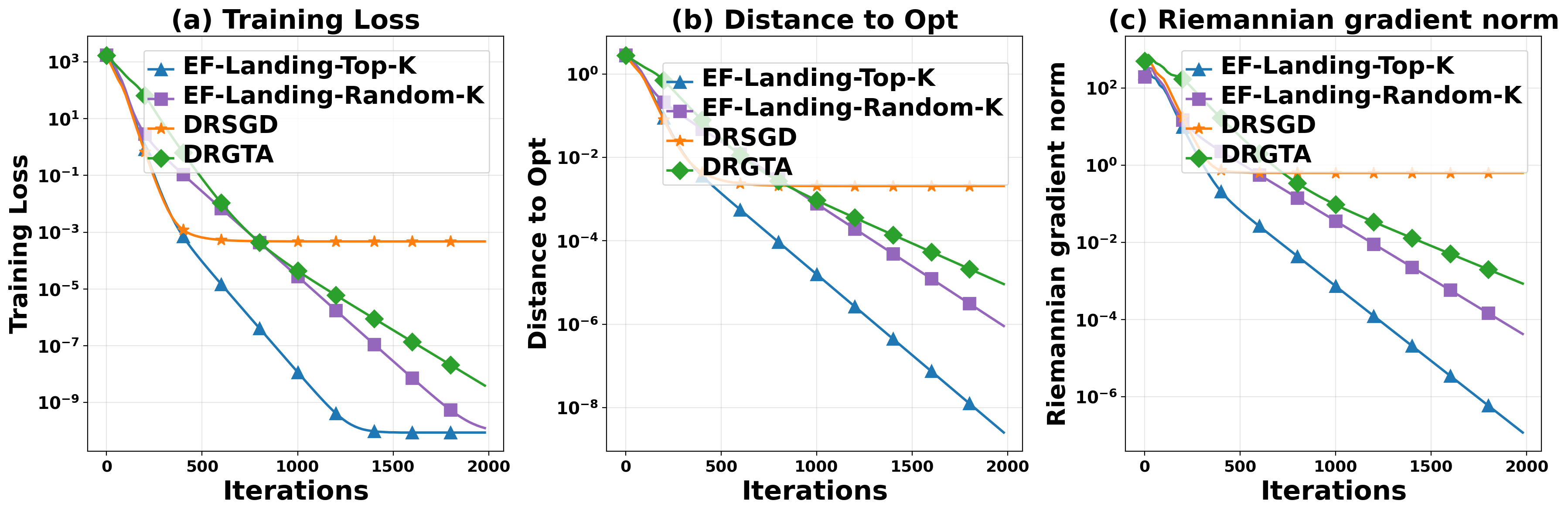}
    \caption{Comparison of EF-Landing Algorithm and Decentralized Algorithm On Synthetic Dataset 2}
    \label{fig:decentralized-2}
\end{figure}

Figures ~\ref{fig:decentralized-1} and ~\ref{fig:decentralized-2} demonstrate that the EF-Landing-based algorithm achieves faster convergence compared to DRGTA and DRSGD, while also exhibiting superior constraint satisfaction performance.

\subsection{Comparison with Centralized Multi-Local-Update Setting}

Using the algorithm proposed in ~\cite{zhang_nonconvex_2024} as a benchmark, we conducted experiments on the Online PCA problem using synthetic datasets. The number of nodes was set to 4, with a total sample size of \( N = 800 \), \( n = 20 \) or \( n = 200 \), and \( p = 5 \).

For Algorithm 1 proposed in \cite{zhang_nonconvex_2024}, we set the parameters as $\tau=5,10$, $\eta=0.001$, and $\eta_g = 1.0$, where $\tau$ represents the number of local update rounds, $\eta$ denotes the learning rate for local updates, and $\eta_g$ represents the learning rate at the center.

For the EF-Landing algorithm, we set \( \lambda = 1.0 \), the learning rate \( \gamma = 0.2 \), and the compress rate to 0.1.

Figures ~\ref{fig:fed-1} and ~\ref{fig:fed-2} demonstrate that the EF-Landing-based algorithm exhibits superior performance in terms of constraint violation and achieves comparable convergence speed to the benchmark.

\begin{figure}[H]
    \centering
    \includegraphics[width=0.50\linewidth]{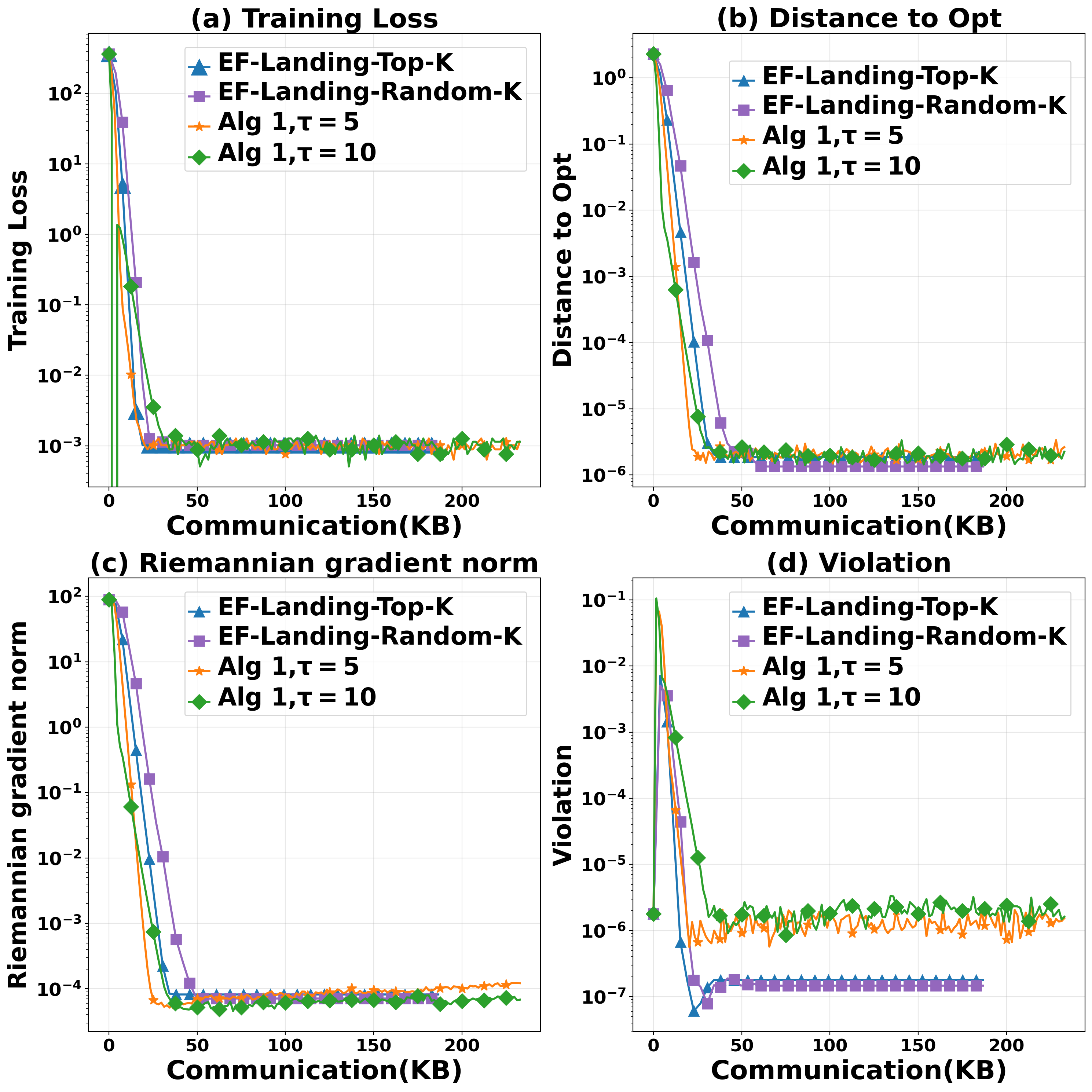}
    \caption{Comparison of EF-Landing Algorithm and Algorithm 1 in [Zhang et al. 2024], synthetic dataset \(n=20\)}
    \label{fig:fed-1}
\end{figure}

\begin{figure}[H]
    \centering
    \includegraphics[width=0.50\linewidth]{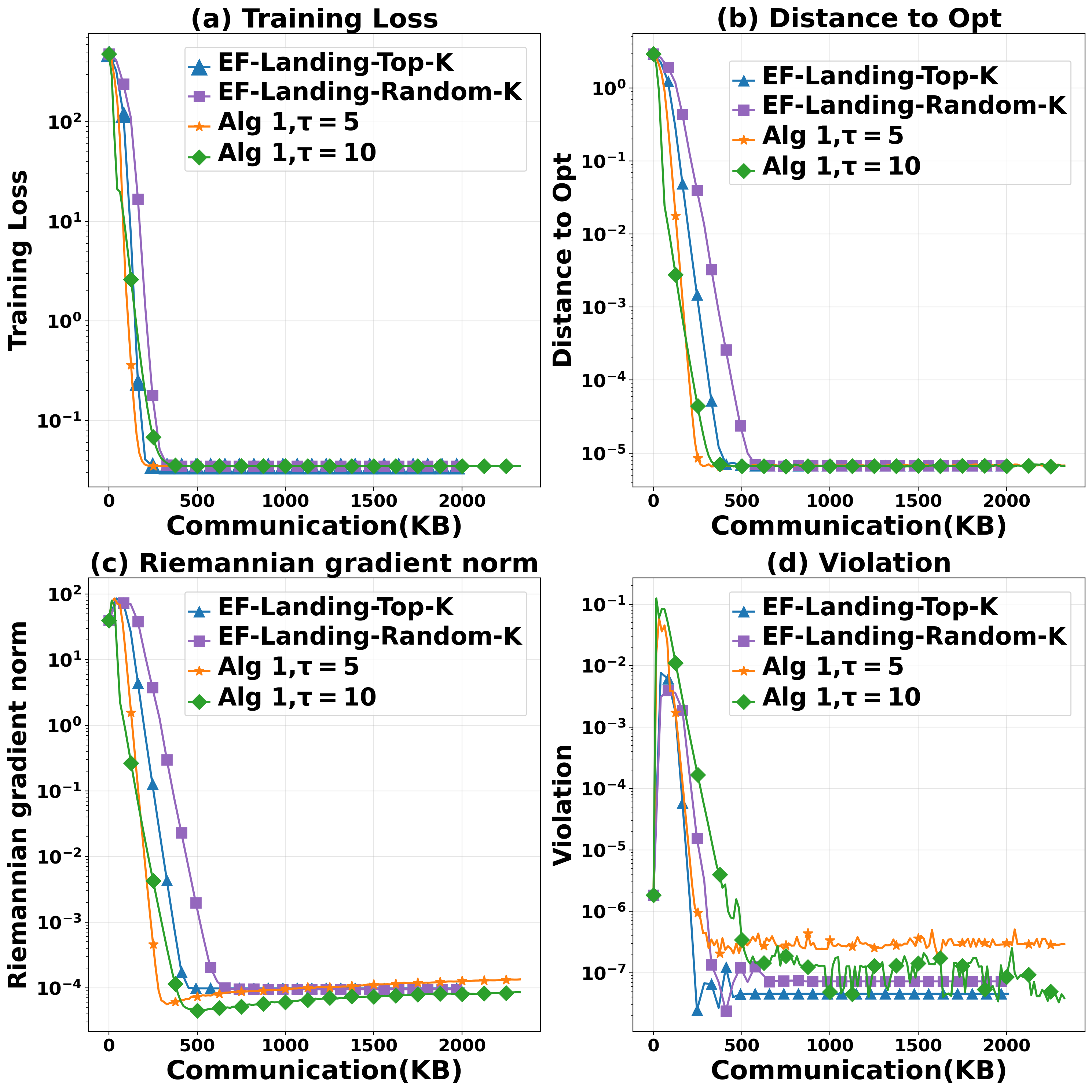}
    \caption{Comparison of EF-Landing Algorithm and Algorithm 1 in [Zhang et al. 2024], synthetic dataset \(n=200\)}
    \label{fig:fed-2}
\end{figure}


\end{document}